\documentclass[10pt,oneside,reqno]{amsart}

\usepackage[a4paper,left=30mm,right=30mm,top=25mm,bottom=30mm]{geometry}
\usepackage{amsmath,amsthm,amssymb,mathtools}
\usepackage{longtable}
\usepackage{lmodern}
\usepackage[colorlinks=true,linkcolor=blue,citecolor=green,urlcolor=blue]{hyperref}

\numberwithin{equation}{section}

\newtheorem{theorem}{Theorem}[section]
\newtheorem{proposition}[theorem]{Proposition}
\newtheorem{lemma}[theorem]{Lemma}
\newtheorem{corollary}[theorem]{Corollary}
\theoremstyle{definition}
\newtheorem{definition}[theorem]{Definition}
\newtheorem{example}[theorem]{Example}
\newtheorem{remark}[theorem]{Remark}

\newcommand{\Z}{\mathbb Z}
\newcommand{\Q}{\mathbb Q}
\newcommand{\C}{\mathbb C}
\newcommand{\PP}{\mathbb P}
\newcommand{\Cl}{\operatorname{Cl}}
\newcommand{\Pic}{\operatorname{Pic}}
\newcommand{\Aut}{\operatorname{Aut}}
\newcommand{\Cr}{\operatorname{Cr}}
\newcommand{\Sing}{\operatorname{Sing}}
\newcommand{\Bs}{\operatorname{Bs}}
\newcommand{\rk}{\operatorname{rk}}

\title{Actions of $(\Z/4)^4$ on rationally connected threefolds}
\author{Konstantin Loginov}

\date{}

\begin{document}

\begin{abstract}
Let $G=(\Z/4)^4$. We prove that if $X$ is a rationally connected
threefold with a faithful action of $G$, then $X$ is
$G$-birational to the Fermat quartic threefold.  If $X$ is a terminal
$G\Q$-Fano threefold, this birational equivalence is biregular.
Consequently, the group $G$ acts faithfully on a rationally
connected threefold but does not embed into $\Cr_3(\C)$.  Combined
with earlier results, this yields a complete classification of the
pairs $(m,r)$ for which $(\Z/m)^r$ embeds into $\Cr_3(\C)$, and of
those for which it embeds into $\operatorname{Bir}(X)$ for a
rationally connected threefold~$X$.
\end{abstract}

\let\oldemptyset\emptyset
\let\emptyset\varnothing

\renewcommand{\familydefault}{\rmdefault}
\newcommand{\Addresses}{{
  \bigskip
  \footnotesize

  \
    \

\textsc{Steklov Mathematical Institute of the Russian Academy of
Sciences, Moscow, Russia, \\Centre of Pure Mathematics, MIPT, Moscow, Russia}
 \\
  \textit{E-mail:} \texttt{loginov@mi-ras.ru}
}}

\maketitle

\section{Introduction}

We work over the field of complex numbers $\C$.  The $n$-dimensional Cremona group
$\Cr_n(\C)$ is the group of birational automorphisms of $\PP^n$.
Finite subgroups of $\Cr_2(\C)$ were classified by
Dolgachev and Iskovskikh \cite{DolgachevIskovskikh}.  For $n=3$, a
complete classification is not known, although some
classes of finite groups are understood: see 
\cite{ProkhorovSimple} for simple groups.

From dimension three onward, Mori theory naturally leads one to
consider the larger class of rationally connected varieties.  A
faithful action on a rationally connected $n$-fold need not yield a
subgroup of $\Cr_n(\C)$, because the variety need not be rational.
Among non-abelian finite simple groups,
Prokhorov's classification shows that $\operatorname{PSL}_2(\mathbb
F_{11})$ is the only group that acts faithfully on a rationally
connected threefold but does not embed into $\Cr_3(\C)$. It acts on the
Klein cubic threefold
\cite[Theorems 1.3, 1.5 and Example 2.6]{ProkhorovSimple}.  A nonsimple
example is provided by $\mathfrak S_7$, which acts
faithfully on a smooth
Fano complete intersection of a quadric and a cubic in $\PP^5$, whereas
$\mathfrak S_7$ does not embed into $\Cr_3(\C)$, see
\cite{BeauvilleSextic} and
\cite[Proposition 1.1]{ProkhorovSymmetric}.  In this paper, we give an
abelian example, namely $G=(\Z/4)^4$.

Finite abelian subgroups of $\Cr_2(\C)$ were classified by Blanc
\cite{Blanc}.  The study of the three-dimensional case was initiated in
\cite{LoginovProduct}.  Recall the two classes of groups introduced
there.  A finite abelian group $A$ is said to be of \emph{product type}
if $A\simeq A_1\times A_2$, where $A_i\subset\Cr_i(\C)$ for $i=1,2$. Groups of product type act faithfully on rational threefolds of the
form $\mathbb{P}^1\times S$, where $S$ is a rational surface, and their
classification follows from \cite{Blanc}.

A finite abelian group is said to be of \emph{K3 type} if there is an exact sequence
\[
 1\longrightarrow C\longrightarrow A\longrightarrow H
 \longrightarrow1,
\]
where $C$ is cyclic and $H$ acts faithfully on a K3 surface.  
The following result gives the basic trichotomy.

\begin{theorem}[{\cite[Theorem 1.7]{LoginovProduct}}]
\label{thm-three-types}
Let $X$ be a rationally connected threefold and let
$A\subset\operatorname{Bir}(X)$ be a finite abelian group.  Then at
least one of the following holds:
\begin{enumerate}
\item $A$ is of product type,
\item $A$ is of K3 type,
\item $A$ acts faithfully on a terminal $A\Q$-Fano threefold $X'$ with
$|-K_{X'}|=\varnothing$ such that $X'$ is $A$-birational to $X$.
\end{enumerate}
In case {\rm(3)}, moreover, every $A\Q$-Mori fiber space
$A$-birational to $X$ is an $A\Q$-Fano threefold with empty
anticanonical system.
\end{theorem}

The three cases in Theorem \ref{thm-three-types} are not mutually exclusive.  
Groups
of K3 type which act on rationally connected threefolds were studied in \cite{LPZ}. 
It is conjectured
that the third case in Theorem \ref{thm-three-types} does not produce any further groups: every such
group should be of product type or of K3 type
\cite[Conjecture 1.8]{LoginovProduct}.  
The following result was proved in \cite{LPZ}.

\begin{theorem}[{\cite[Theorem 1.5]{LPZ}}]
\label{thm-intro-2}
Let $A$ be a finite abelian group acting faithfully on a rationally
connected threefold.  Then either $A$ is of product type, or the third
alternative of Theorem~\ref{thm-three-types} holds, or $A$ is
isomorphic to one of the groups
\begin{equation}
\label{eq-exceptional-groups}
 (\Z/4)^4,\qquad
 (\Z/6)^3\times\Z/2,\qquad
 (\Z/6)^2\times(\Z/3)^2,\qquad
 (\Z/8)^2\times\Z/4\times\Z/2.
\end{equation}
\end{theorem}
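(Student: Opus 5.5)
The plan is to combine Theorem~\ref{thm-three-types} with a case-by-case analysis of groups of K3 type. By Theorem~\ref{thm-three-types}, $A$ is of product type, or of K3 type, or the third alternative holds. In the first and third cases there is nothing to prove. So the whole argument reduces to the following claim: a finite abelian group $A$ of K3 type that acts faithfully on a rationally connected threefold, and is not of product type, is isomorphic to one of the groups in \eqref{eq-exceptional-groups}.

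First I want a more precise geometric description of the K3 type than the abstract exact sequence. I will run the $A$-equivariant MMP on a resolution of $X$ and obtain an $A\Q$-Mori fiber space $X'\to Z$.
\begin{itemize}
\item If $\dim Z\ge 1$, I expect to show directly that $A$ is of product type. Over $Z=\PP^1$, the kernel of the action on the base acts on the generic del Pezzo fiber, and the image is cyclic. Over a rational surface $Z$, the kernel acts on a conic, so it is cyclic or $(\Z/2)^2$. This should follow the arguments of \cite{LoginovProduct}.
\item If $\dim Z=0$, then $X'$ is an $A\Q$-Fano threefold. If $|-K_{X'}|=\varnothing$ we are in alternative~(3).
\item Otherwise, since $A$ is abelian, there is an $A$-semi-invariant member $S\in|-K_{X'}|$. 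The subgroup acting trivially on $S$ acts faithfully on the normal direction at a general point, so it is cyclic; call it $C$.
\item When $S$ is normal with Du Val singularities, its minimal resolution is a K3 surface on which $H=A/C$ acts faithfully. The cases where $S$ is non-normal, reducible, or has worse singularities should again yield product-type groups, by considering the birational model of $S$ (a rational or ruled surface) and the induced action.
\end{itemize}

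Next I restrict the possible $H$ and the extensions. The group $H$ is a finite abelian group acting on a K3 surface. Its symplectic part $H_s$ lies in Nikulin's list of abelian symplectic groups: $\Z/n$ with $n\le 8$, $(\Z/2)^k$ with $k\le 4$, $(\Z/3)^2$, $(\Z/4)^2$, $\Z/2\times\Z/4$, $\Z/2\times\Z/6$. The quotient $H/H_s$ is cyclic of order $m$ with $\varphi(m)\le 20$, subject to the compatibility conditions on $H_s\times\Z/m$-type actions.

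The extension $1\to C\to A\to H\to 1$ is also constrained by the threefold. The fixed locus of $C$ contains $S$ as a component, so the order of $C$ and the way $H$ acts on the normal bundle are tied to the non-symplectic character of $H$. Moreover, $A$ has rank at most $6$ and satisfies the known rank and exponent bounds for abelian groups acting on rationally connected threefolds. I then enumerate all abelian $A$ fitting these constraints and compare each with Blanc's classification \cite{Blanc} of abelian subgroups of $\Cr_2(\C)$ to decide whether $A\simeq A_1\times A_2$ with $A_1\subset\Cr_1(\C)$ and $A_2\subset\Cr_2(\C)$. The groups that survive should be exactly \eqref{eq-exceptional-groups}. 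For example, $(\Z/4)^4$ arises as $\Z/4$ extended by $(\Z/4)^2\times\Z/4$ acting on the quartic surface $S$ cut out from the Fermat quartic threefold.

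The main obstacle is this final enumeration. There is a large list of candidate pairs $(H,C)$ and extension classes. For each candidate one must either exhibit a splitting into a product-type decomposition, or verify that no such splitting exists because the rank and exponent exceed what Blanc's list allows in $\Cr_2(\C)\times\Cr_1(\C)$. The second hardest point is the degenerate anticanonical members $S$. To handle them I would try to show that $X'$ then carries an $A$-equivariant structure that is close to a fibration, forcing product type.
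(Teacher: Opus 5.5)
The paper does not prove this statement; it quotes it from \cite{LPZ}, so your outline has to stand on its own. The reduction steps are fine. They are the proof of Theorem~\ref{thm-three-types} in \cite{LoginovProduct}: Corollary~3.17 there handles $\dim Z>0$, and a non-plt invariant $S\in|-K_{X'}|$ gives product type. What remains is a terminal $A\Q$-Fano $X'$, an invariant $S\in|-K_{X'}|$ with $(X',S)$ plt, a cyclic $C$, and $H=A/C$ acting faithfully on the K3 surface $\widetilde S$. Two small corrections: over $\PP^1$ the image of $A$ may be $(\Z/2)^2$, not only cyclic, and an extension is not yet a product decomposition. The splitting is what the cited results supply.

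The gap is the final step, which is the entire content of the theorem. Your constraints bound $H$ (Nikulin's list, $\varphi(m)\le 20$), but nothing you state bounds $|C|$. There is also no exponent bound for abelian subgroups of $\operatorname{Bir}(X)$, since $\Z/n\subset\operatorname{PGL}_4(\C)$ for every $n$. So ``enumerate and compare with Blanc's list'' cannot leave exactly \eqref{eq-exceptional-groups}. A counterexample family is $\Z/2^k\times(\Z/4)^3$ for every $k\ge 3$:
\begin{itemize}
\item It is of K3 type, with $C=\Z/2^k$ and $H=(\Z/4)^3$ acting on the Fermat quartic surface.
\item It is not of product type. It has no direct factor $\Z/2$ or $(\Z/2)^2$, and removing a cyclic direct factor leaves $(\Z/4)^3$ or $\Z/2^k\times(\Z/4)^2$, neither of which is in Blanc's list.
\item It has rank $4$ and satisfies Theorem~\ref{thm-kollar-zhuang}, with $P_2\simeq\Z/4$.
\end{itemize}
For these groups the theorem asserts that only the third alternative can occur, i.e.\ they never arise in the plt K3 situation. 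That is a geometric statement, and no group-theoretic enumeration can deliver it.

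What is missing is an argument that uses the threefold to bound $C$ and pin down the extension. Your remark that $|C|$ is ``tied to the non-symplectic character of $H$'' is never turned into an inequality, and its obvious version is empty. Via the residue map, the character of $A$ on $H^0(S,K_S)$ is determined by that of the section defining $S$, and $C$ acts trivially on that section.

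Genuinely geometric input is needed to exclude candidates such as $\Z/8\times(\Z/4)^3$ in the K3 situation, for example:
\begin{itemize}
\item the invariant lattices of Proposition~\ref{prop-bh-k3}, to compute $(-K_{X'})^3=(-K_{X'}|_S)^2$ and to control how $S$ meets $\Sing(X')$;
\item degree and index bounds for $X'$ and for the quotient $X'/C$;
\item local-stabilizer and orbit arguments as in Theorem~\ref{thm-local} and Corollary~\ref{lem-orbits}.
\end{itemize}
That exclusion is what \cite{LPZ} proves, and your outline does not supply it.
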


The four groups \eqref{eq-exceptional-groups} are of K3 type and not of product type, see
\cite[Table 1]{LoginovProduct}.  
Each of these four groups admits a faithful action on a non-rational,
rationally connected
threefold realized as a hypersurface in a weighted projective space,
see \cite[Example~1.8(1)--(4)]{LPZ}.
We expect
that none of the four groups in \eqref{eq-exceptional-groups} embeds
into $\Cr_3(\C)$. In this paper we prove this for the group $G=(\mathbb{Z}/4)^4$.  

Consider the diagonal action of $G=(\mathbb{Z}/4)^4$ on the Fermat quartic
\begin{equation}
\label{eq-fermat}
X_4=\{x_0^4+x_1^4+x_2^4+x_3^4+x_4^4=0\}\subset\PP^4,
\end{equation}
where, after choosing generators $g_1,\ldots,g_4$ of $G$, the element
$g_i$ multiplies $x_i$ by a primitive fourth root of unity and fixes
the remaining coordinates. We call this action of $G$ on $X_4$ {standard}. 
Our main result is as follows. 

\begin{theorem}
\label{thm-classification}
Let $X$ be a rationally connected threefold and let
$G=(\Z/4)^4\subset\operatorname{Bir}(X)$.  Then there exist an
automorphism $\alpha\in\operatorname{Aut}(G)$ and a birational map
$\varphi\colon X\dashrightarrow X_4$ such that
\[
 \varphi\circ g=\alpha(g)\circ\varphi
 \qquad\text{for every }g\in G,
\]
where $G$ acts on $X_4$ by the standard diagonal action
\eqref{eq-fermat}.
Moreover, if $X$ is a terminal $G\Q$-Fano threefold, then $\varphi$ is an isomorphism.
\end{theorem}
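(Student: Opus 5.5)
We will run the $G$-equivariant minimal model program and then analyze the resulting Mori fiber space. After $G$-equivariant resolution and the $G$-MMP, $X$ is $G$-birational to a $G\Q$-Mori fiber space $f\colon X'\to Z$. Two preliminary observations are needed.

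\emph{The base is a point.} Since $G=(\Z/4)^4$ is not of product type (\cite[Table 1]{LoginovProduct}), $Z$ cannot be a curve or a surface. Indeed, for a del Pezzo fibration or a conic bundle, the group $G$ splits as an extension of a subgroup of $\Aut(Z)$, with $Z$ equal to $\PP^1$ or a rational surface, by a group acting faithfully on the generic fiber. One then checks, using the abelian subgroups of $\Cr_1$ and $\Cr_2$ from \cite{Blanc} and the bounds on abelian groups acting on conics and del Pezzo surfaces over function fields, that such a $G$ would be of product type. This is essentially the argument of \cite{LoginovProduct}, so $X'$ is a terminal $G\Q$-Fano threefold with $\rk\Cl(X')^G=1$.

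\emph{The anticanonical system is nonempty.} By Theorem \ref{thm-intro-2} and the fact that $G$ is one of the exceptional groups, we cannot directly exclude the third alternative. The plan is to show first that $|-K_{X'}|\neq\varnothing$, or more precisely to bound the singularities. The key tool is the orbifold Riemann–Roch formula
\[
 \chi(-K)=\tfrac12(-K)^3+3-\textstyle\sum_P \tfrac{b_P(r_P-b_P)}{2r_P}
\]
together with the basket. Every non-Gorenstein point has an abelian stabilizer acting on the local index-one cover. Large $2$-groups of rank $4$ cannot fix a terminal cyclic quotient point, because the stabilizer embeds in a small rank group, at most rank $3$ acting on $\C^3$ modulo the cyclic cover. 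Hence non-Gorenstein points come in large $G$-orbits (of length at least $16$ or so). Comparing with the bound $\sum(1-1/r_P)<24$ from Kawamata's inequality $-K\cdot c_2>0$ forces the basket to be small or empty, and then $h^0(-K)=\chi(-K)\ge 3-(\text{small})>0$. The delicate sub-step, and the first \emph{main obstacle}, is this orbit/stabilizer analysis on terminal quotient singularities of types $\frac1r(1,-1,b)$ and $cA/r$ etc. I would try first to show $X'$ is Gorenstein. If it is, then $X'$ is a Gorenstein Fano with $\rk\Pic^G=1$.

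\emph{Identifying the Fermat quartic.} Now $G$ acts on $H^0(-K)$ of dimension $g+2$, which decomposes into characters. For the Gorenstein Fano $X'$ we consider the anticanonical map. The fact that $G$ contains $(\Z/4)^4$ and must act faithfully forces at least $5$ distinct characters. On the other hand, the relevant genus $g$ is small: by the rank-one condition and the classification of Gorenstein Fano threefolds, with terminal Gorenstein ones smoothable via Namikawa, the genus satisfies $g\le 10$ or so. One then checks case by case (del Pezzo threefolds, index-one Fanos of genus $2,\dots,12$, and hyperelliptic or trigonal ones) which admit faithful $(\Z/4)^4$. Abelian subgroups of $\Aut$ of such varieties are bounded by the linear action on $H^0(-K)$ together with the equations: quadric and cubic intersections etc. fail by a count of the $4$-torsion rank in the diagonalizable torus preserving the equations. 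The only survivor is $g=3$ with $X'$ a quartic in $\PP^4$ on which $G$ acts diagonally. A diagonal $(\Z/4)^4\subset PGL_5$ preserving a quartic $F$ forces every monomial of $F$ to be invariant, and the faithful rank $4$ leaves only the $x_i^4$. After rescaling, $F$ is Fermat, and the identification of the action with the standard one is up to $\alpha\in\Aut(G)$.

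\emph{Biregularity.} If $X$ is already a terminal $G\Q$-Fano, the above shows $X\simeq X_4$. More generally, the Fermat quartic is $G$-birationally superrigid, by the Cheltsov–Pukhlikov style argument: smooth quartics are superrigid, so any $G$-birational map between $G$-Mori fiber spaces with $X_4$ is an isomorphism. This yields the final claim, and also that each $G$-birational model reduces to $X_4$.
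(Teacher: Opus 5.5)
\textbf{Main gap: the non-Gorenstein case.} You flag this step as the main obstacle, and the route you suggest for it cannot work.

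Theorem~\ref{thm-local} only tells you that the stabilizer $G_P$ of a non-Gorenstein point is generated by at most three elements. Hence $|G_P|\le 64$, and orbits have length divisible by $4$ (this is Corollary~\ref{lem-orbits}), not ``at least $16$''. Length $4$ is also not locally obstructed. Let $\zeta_8$ be a primitive eighth root of unity. The group $\langle\zeta_8 I,\ \mathrm{diag}(i,1,1),\ \mathrm{diag}(1,i,1)\rangle\simeq\Z/8\times(\Z/4)^2$ maps onto $(\Z/4)^3$ modulo $\pm I$, so $(\Z/4)^3$ acts faithfully on a $\frac12(1,1,1)$ germ. Likewise $(\Z/4)^3$ acts diagonally and faithfully on a $\frac13(1,2,1)$ germ.

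With basket multiplicities only divisible by $4$, the numerics do not close. The correct Kawamata bound is $\sum_{Q\in B(X)}(r_Q-1/r_Q)<24$, not $\sum(1-1/r_P)<24$. Even with it, orbifold Riemann--Roch admits $B(X)=8\times\frac12(1,1,1)\sqcup4\times\frac13(1,2,1)$ with $(-K_X)^3=\frac23$, $-K_X\cdot c_2(X)=\frac43$ and $\chi(-K_X)=0$ (see the Remark after \eqref{eq-orr-c2}). So no estimate of the form $h^0(-K)\ge 3-(\text{small})$ is available.

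The paper closes this case with a global argument. By Proposition~\ref{prop-k3}, a nonempty $|-K_X|$ already forces $X$ to be Gorenstein, so only the case $h^0(-K_X)=0$, i.e.\ $\chi(X,-K_X)=0$, has to be excluded.
\begin{enumerate}
\item Take a $G$-equivariant resolution $\pi\colon Z\to X$ that is an isomorphism over $X_{\mathrm{reg}}$.
\item The difference $\chi(Z,-K_Z)-\chi(X,-K_X)$ is a signed sum of lengths of $G$-equivariant sheaves supported on $\Sing(X)$. Since orbits on $X$ have length divisible by $4$, the difference is divisible by $4$ (Lemma~\ref{lem-euler-congruence}).
\item Hirzebruch--Riemann--Roch on the smooth rationally connected $Z$ gives $\chi(Z,-K_Z)=3+\frac12(-K_Z)^3$.
\item The triple intersection $(-K_Z)^3$ is represented by a $G$-invariant zero-cycle, so $4\mid(-K_Z)^3$ (Corollary~\ref{cor-intersection-divisibility}).
\item Hence $\chi(Z,-K_Z)$ is odd, which contradicts step 2.
\end{enumerate}

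\textbf{Gorenstein case.} This part is also too vague to count as a proof; three points need fixing.
\begin{enumerate}
\item The condition $\rk\Cl(X')^G=1$ does not give $\rho(X')=1$. Your bound ``$g\le10$ or so'' is false for higher Fano index, where $(-K)^3$ reaches $64$. The paper controls both via a $G$-invariant $S\in|-K_X|$. This is a smooth K3 surface on which $H\simeq(\Z/4)^3$ acts with invariant lattice $\langle4\rangle$ (Propositions~\ref{prop-k3} and~\ref{prop-bh-k3}). That forces $(-K_X)^3\in\{4,16,36,64\}$, hence $\rho(X)=1$ by Theorem~\ref{thm-prokhorov}. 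The same lattice, together with primitivity of $L|_S$, later rules out genus $9$.
\item Singular terminal Gorenstein models cannot be analysed on a smoothing, since the $G$-action need not extend to it. The paper uses Namikawa's bound only to produce a nontrivial stabilizer of a singular point. It then gets a contradiction because every eigencoordinate of $H^0(X,-K_X)$ is nonzero at that point.
\item Your Fermat identification and the biregularity claim agree with the paper. You should, however, show that the semi-invariant quartic has weight zero (a nonzero weight would make it a single monomial), rather than asserting that all its monomials are invariant.
\end{enumerate}
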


Since the Fermat quartic threefold is non-rational, we deduce the
following.

\begin{corollary}
\label{cor-cremona}
There is no embedding
$
 (\Z/4)^4\hookrightarrow\Cr_3(\C).
$
\end{corollary}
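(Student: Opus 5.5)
The plan is to argue by contradiction and reduce the statement directly to Theorem~\ref{thm-classification}. Suppose there is an injective homomorphism $\iota\colon(\Z/4)^4\hookrightarrow\Cr_3(\C)$. Then $G=\iota((\Z/4)^4)$ is a subgroup of $\operatorname{Bir}(\PP^3)$ isomorphic to $(\Z/4)^4$, acting faithfully by birational automorphisms on $X=\PP^3$. Since $\PP^3$ is rational, it is in particular rationally connected, so $X=\PP^3$ with this $G$ satisfies the hypotheses of Theorem~\ref{thm-classification}.

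That theorem then provides an automorphism $\alpha\in\Aut(G)$ and a birational map $\varphi\colon\PP^3\dashrightarrow X_4$ that intertwines the action on $\PP^3$ with the standard action on $X_4$. We only need the underlying birational map and can forget the equivariance: it shows that the Fermat quartic $X_4$ is rational.

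To get the contradiction, we use that $X_4$ is non-rational. The Fermat quartic \eqref{eq-fermat} is smooth, since the partial derivatives $4x_i^3$ vanish simultaneously only at the origin. By the Iskovskikh--Manin theorem, a smooth quartic threefold in $\PP^4$ is birationally superrigid: its group of birational self-maps equals its automorphism group, and it is not birational to any other Mori fiber space. In particular it is not birational to $\PP^3$, so it is non-rational. This contradicts the rationality of $X_4$ obtained above, so no embedding $(\Z/4)^4\hookrightarrow\Cr_3(\C)$ exists.

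There is essentially no obstacle once Theorem~\ref{thm-classification} is available. The only input not contained in the paper is the classical non-rationality of smooth quartic threefolds. One must also check that Theorem~\ref{thm-classification} applies to an arbitrary faithful birational action on $\PP^3$, not only to regular actions. This holds because the theorem is stated for subgroups $G\subset\operatorname{Bir}(X)$.
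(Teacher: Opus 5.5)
Your proposal is correct and follows the paper's own proof: assume an embedding into $\Cr_3(\C)=\operatorname{Bir}(\PP^3)$, apply Theorem~\ref{thm-classification} to the rationally connected threefold $\PP^3$ to obtain a birational map $\PP^3\dashrightarrow X_4$, and contradict the Iskovskikh--Manin non-rationality of smooth quartic threefolds. Your extra checks (smoothness of the Fermat quartic, and that the theorem covers birational actions) are harmless additions.
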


Each group in \eqref{eq-exceptional-groups} is a cyclic extension of a
maximal finite abelian group $H$ acting faithfully on a K3 surface
\cite{BH}.  For the groups  in
\eqref{eq-exceptional-groups}, the corresponding groups $H$ are
$(\Z/4)^3$, $(\Z/6)^2\times\Z/2$, $\Z/6\times(\Z/3)^2$, and
$\Z/8\times\Z/4\times\Z/2$, respectively.  Let
$H_{\mathrm{symp}}\subset H$ be the subgroup of automorphisms preserving
a nonzero holomorphic $2$-form on the corresponding K3 surface.  In the
same order, the groups $H_{\mathrm{symp}}$ are $(\Z/4)^2$,
$\Z/6\times\Z/2$, $(\Z/3)^2$, and $\Z/4\times\Z/2$, respectively.
These groups occur in Nikulin's classification of finite
abelian groups of symplectic automorphisms of K3 surfaces
\cite{NikulinK3}.

It is known that $X_4$ is birationally superrigid
\cite{IskovskikhManin}, and hence
$\operatorname{Bir}(X_4)=\operatorname{Aut}(X_4)$.  The standard
$G$-action on $X_4$ extends to a faithful action of the group
\begin{equation}
 \Gamma=(\mu_4^5/\mu_4)\rtimes\mathfrak S_5
 \simeq(\Z/4)^4\rtimes\mathfrak S_5
 \label{eq-full-fermat-group}
\end{equation}
where $\mathfrak S_5$ acts by permuting coordinates. 
Thus $\Gamma\subset\operatorname{Aut}(X_4)$ and
$|\Gamma|=4^4\cdot5!=30\,720$.  This is the maximal possible order of
the automorphism group of a smooth quartic threefold, attained only by
$X_4$ \cite[Theorem 1.1]{YangYuZhu}.  Consequently,
$\operatorname{Bir}(X_4)=\operatorname{Aut}(X_4)=\Gamma$.  Theorem
\ref{thm-classification} now gives the following corollary.

\begin{corollary}
\label{cor-unique-gamma-model}
Let $X$ be a rationally connected threefold and let
$\Gamma=(\Z/4)^4\rtimes\mathfrak S_5\subset\operatorname{Bir}(X)$.  Then
there exist an automorphism $\alpha\in\operatorname{Aut}(\Gamma)$ and
a birational map $\varphi\colon X\dashrightarrow X_4$ such that
$\varphi\circ g=\alpha(g)\circ\varphi$ for every
$g\in\Gamma$, where $\Gamma$ acts naturally on $X_4$.
\end{corollary}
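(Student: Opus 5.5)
The plan is to reduce Corollary~\ref{cor-unique-gamma-model} to Theorem~\ref{thm-classification} applied to the normal subgroup $G=(\Z/4)^4\subset\Gamma$, and then to upgrade the $G$-equivariant map to a $\Gamma$-equivariant one using $\operatorname{Bir}(X_4)=\operatorname{Aut}(X_4)=\Gamma$.

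First, I need to fix which subgroup of the abstract group $\Gamma\subset\operatorname{Bir}(X)$ plays the role of $G$. I take $N\subset\Gamma$ to be the image of $\mu_4^5/\mu_4$ under the given abstract identification; it is isomorphic to $(\Z/4)^4$. Theorem~\ref{thm-classification} applied to $N\subset\operatorname{Bir}(X)$ gives a birational map $\varphi\colon X\dashrightarrow X_4$ and an isomorphism $\beta\colon N\to G_{\rm std}$ onto the standard diagonal subgroup with $\varphi\circ n=\beta(n)\circ\varphi$. Conjugation by $\varphi$ then defines an injective homomorphism
\[
 \Phi\colon\Gamma\to\operatorname{Bir}(X_4),\qquad g\mapsto\varphi\circ g\circ\varphi^{-1}.
\]
Since $\operatorname{Bir}(X_4)=\operatorname{Aut}(X_4)=\Gamma$, where this $\Gamma$ acts naturally, and both groups have order $30\,720$, $\Phi$ is an isomorphism onto $\operatorname{Aut}(X_4)$. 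Fixing once and for all the identification of the abstract $\Gamma$ with $\operatorname{Aut}(X_4)$, I set $\alpha=\Phi\in\operatorname{Aut}(\Gamma)$. Then $\varphi\circ g=\alpha(g)\circ\varphi$ for every $g\in\Gamma$ by construction.

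The only inputs beyond Theorem~\ref{thm-classification} are birational superrigidity, which gives $\operatorname{Bir}(X_4)=\operatorname{Aut}(X_4)$, and the computation $\operatorname{Aut}(X_4)=\Gamma$ from \cite{YangYuZhu}, both quoted above. In fact the argument does not even need the restriction of $\Phi$ to $N$ to agree with $\beta$ or with the standard subgroup. Any $\Gamma$-action on $X$ together with any birational map to $X_4$ conjugates $\Gamma$ into $\operatorname{Aut}(X_4)\simeq\Gamma$, and the conjugation is automatically an isomorphism by cardinality. The role of Theorem~\ref{thm-classification} is only to produce some birational map $X\dashrightarrow X_4$; it does so because $\Gamma\supset(\Z/4)^4$ acts on $X$.

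The main subtlety is bookkeeping. One must make sure that $\alpha$ is an automorphism of the abstract group $\Gamma$, which requires a fixed isomorphism between the abstract $\Gamma$ and $\operatorname{Aut}(X_4)$. One must also check that $\Phi(g)$ is a genuine birational self-map of $X_4$, which holds because $\varphi$ is birational. I expect no real obstacle beyond correctly invoking the equality $\operatorname{Bir}(X_4)=\Gamma$.
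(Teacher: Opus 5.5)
Your proposal is correct and essentially identical to the paper's proof. Both restrict to $(\Z/4)^4$, apply Theorem~\ref{thm-classification} to obtain $\varphi$, and conjugate $\Gamma$ injectively into $\operatorname{Bir}(X_4)=\operatorname{Aut}(X_4)=\Gamma$. Both then conclude surjectivity from injectivity and equal finite orders, which yields $\alpha$.
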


In dimension two, Oguiso characterized the Fermat quartic K3 surface
as the unique K3 surface admitting a faithful action of
$(\Z/4)^3\rtimes\mathfrak S_4$
\cite[Theorem 1.2]{OguisoFermatK3}.

For a prime number $p$ and an integer $a\geq1$, one may ask how large
$r$ can be if $(\mathbb{Z}/p^a)^r\subset\operatorname{Bir}(X)$ for
some rationally connected threefold $X$.  More generally, for rationally
connected varieties $X$ of a fixed dimension, one seeks bounds for the
number of generators of finite abelian $p$-subgroups of
$\operatorname{Bir}(X)$.  Such bounds in dimension three were
established in a series of works
\cite{ProkhorovElementary,Prokhorov2Elementary,
ProkhorovShramovP,Kuznetsova3,XuRank,Loginov3}.  The corresponding
result in higher dimensions was obtained by Koll\'ar and Zhuang.

For a finite abelian group $A$, let $\mathfrak r(A)$ denote its minimal
number of generators.

\begin{theorem}[{\cite[Proposition~8, Remark~10, and
Corollary~11]{KZh24}}]
\label{thm-kollar-zhuang}
Let $p$ be a prime number and let $P$ be a finite abelian $p$-group
acting faithfully on a smooth projective rationally connected variety
$X$ of dimension $n$.  Then
\begin{equation}
\label{eq-bound}
 \mathfrak r(P)\leq\frac{pn}{p-1}.
\end{equation}
Moreover, there is a subgroup $H\subset P$ which has a fixed point on
$X$ and satisfies
\[
 \log_p[P:H]\leq\frac{n}{p-1}.
\]
There is also a decomposition
$
 P\simeq P_1\times P_2,
$
where $P_1$ can be generated by at most $n$ elements and
\[
 |P_2|\leq p^{n/(p-1)}.
\]
\end{theorem}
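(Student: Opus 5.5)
The statement immediately above is Theorem~\ref{thm-kollar-zhuang}, which is quoted from \cite{KZh24}. The plan below is how I would reprove it, following the Koll\'ar--Zhuang strategy of finding a fixed point for a large subgroup and then linearizing.

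\textbf{Step 1: a large subgroup with a fixed point.} Since $X$ is rationally connected, $H^i(X,\mathcal O_X)=0$ for $i>0$, so the holomorphic Lefschetz number of every $g\in P$ equals $1$. I would use this to produce a subgroup $H\subset P$ with $X^H\neq\varnothing$ and $\log_p[P:H]\le n/(p-1)$. Arguing by induction on $|P|$, I would pick a maximal subgroup $H$ with nonempty fixed locus and pass to a component $Z$ of $X^H$ that is invariant under a larger group. The holomorphic Lefschetz formula, read modulo $p$ (equivalently, Smith theory for $p$-groups on $\mathbb F_p$-cohomology), forces fixed points. The obstruction to enlarging $H$ is controlled by the normal representation at a fixed point. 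Its characters have $\mathbb Z$-linear combinations that are divisible by $p$ in the Lefschetz denominators, and each index-$p$ step costs at least $p-1$ dimensions. Summing these costs gives $(p-1)\log_p[P:H]\le n$.

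\textbf{Step 2: structure of $H$.} At a fixed point $x\in X^H$, the action of $H$ on $T_xX$ is faithful, because a finite group fixing a point and acting trivially on the tangent space acts trivially near $x$. Since $H$ is abelian, this representation is diagonalizable, so $H\hookrightarrow(\C^*)^n$ and hence $\mathfrak r(H)\le n$.

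\textbf{Step 3: the decomposition and the rank bound.} I would realize $P\simeq P_1\times P_2$ by choosing a complement-type splitting: $P_1$ is a subgroup containing enough of $H$ and generated by at most $n$ elements, and $P_2$ injects into $P/H$. This gives $|P_2|\le[P:H]\le p^{n/(p-1)}$. Then
\[
\mathfrak r(P)\le n+\frac{n}{p-1}=\frac{pn}{p-1}.
\]

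\textbf{Main obstacle.} I expect Step 1 to be the hardest part, namely extracting the sharp index bound $n/(p-1)$ from fixed-point counting. The splitting in Step 3 also needs care, since $H$ need not be a direct factor of $P$. To handle that, I would first try replacing $H$ by a suitable subgroup whose quotient splits off, while preserving the fixed-point property.
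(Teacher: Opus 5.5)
Your Step 2 and the final rank inequality are fine. Step 1, however, carries the whole content of the theorem, and it has a genuine gap: none of the tools you name produces a fixed point for a non-cyclic subgroup. With $H^{>0}(X,\mathcal O_X)=0$, the holomorphic Lefschetz number of each $g\in P$ equals $1$, but this only gives $X^g\neq\varnothing$ one element at a time. For instance, $(\Z/2)^2$ acts on $\PP^1$ with no common fixed point, although every element has one. Smith theory needs $\mathbb F_p$-acyclicity, which $X$ does not have. Your induction through a component $Z$ of $X^H$ also fails: $Z$ need not be rationally connected, and $\chi(Z,\mathcal O_Z)$ is not controlled modulo $p$. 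So neither the inductive hypothesis nor a Lefschetz argument applies on $Z$. Finally, ``each index-$p$ step costs $p-1$ dimensions'' restates the desired inequality rather than deriving it.

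The missing idea is a global divisibility argument with the Todd class, which is how \cite{KZh24} proceeds. The paper itself only cites the result, but its remark on $\mathfrak h_n$, Lemma~\ref{lem-invariant-zero-cycle}, and the proof of Corollary~\ref{cor-nongorenstein} use the same mechanism.
\begin{enumerate}
\item Let $p^m$ be the minimal length of a $P$-orbit on $X$. Every $P$-invariant zero-cycle then has degree divisible by $p^m$.
\item Every Chern number of $X$ is the degree of such a cycle. On the complete flag bundle $\pi\colon F\to X$ of $T_X$, each point has stabilizer contained in that of its image, so orbit lengths on $F$ are again divisible by $p^m$. The tautological line bundles on $F$ are $P$-linearized, and by the splitting principle a Chern number of $X$ is an integral combination of their intersection numbers on $F$. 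Now apply Lemma~\ref{lem-invariant-zero-cycle}.
\item Since $\mathfrak h_n\mathrm{Td}_n$ has integral coefficients and $v_p(\mathfrak h_n)=\lfloor n/(p-1)\rfloor$, you get $p^m\mid\mathfrak h_n\chi(X,\mathcal O_X)=\mathfrak h_n$. Hence $m\le\lfloor n/(p-1)\rfloor$, and the stabilizer of a point in a minimal orbit is the required $H$.
\end{enumerate}

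In Step 3 you do not need a splitting with $P_2\cap H=1$. Write $P=\bigoplus_i\Z/p^{a_i}$ with $a_1\ge a_2\ge\cdots$, let $P_1$ be the sum of the first $n$ summands and $P_2$ the rest. Since $\mathfrak r(p^kH)\le\mathfrak r(p^kP)$ for all $k$, the invariants $b_i$ of $H$ satisfy $b_i\le a_i$. Because $H$ has at most $n$ invariants, $|H|\le|P_1|$, and therefore $|P_2|=|P|/|P_1|\le[P:H]\le p^{n/(p-1)}$.
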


In particular, for $n=3$, inequality \eqref{eq-bound} gives
$\mathfrak r(P)\leq6$ for $p=2$, $\mathfrak r(P)\leq4$ for $p=3$, and
$\mathfrak r(P)\leq3$ for $p\geq5$.  These bounds are sharp and are
attained on rational varieties, see Example~\ref{ex-standard-actions}. 
Together with Theorem~\ref{thm-classification}, these estimates give
sharp bounds for powers of cyclic groups.

\begin{theorem}
\label{thm-sharp-bounds}
Let $G=(\Z/m)^r$, where $m\geq2$ and $r\geq1$.  Then
$G\subset\Cr_3(\C)$ if and only if one of the following holds:
\[
 m=2,\ r\leq6,\quad\quad
 m=3,\ r\leq4,\quad\quad
 m\geq4,\ r\leq3.
\]
Moreover, there exists a rationally connected threefold $X$ such that
$G\subset\operatorname{Bir}(X)$ if and only if one of the following
holds:
\[
 m=2,\ r\leq6,\quad\quad
 m\in\{3,4\},\ r\leq4,\quad\quad
 m\geq5,\ r\leq3.
\]
\end{theorem}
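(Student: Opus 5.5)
We prove Theorem~\ref{thm-sharp-bounds} in two halves, upper bounds and constructions, each treated for $\Cr_3(\C)$ and for $\operatorname{Bir}(X)$ separately.

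\emph{Upper bounds.} Let $G=(\Z/m)^r$ act faithfully on a rationally connected threefold $X$. Passing to a $G$-equivariant resolution, we may assume $X$ is smooth and projective and $G\subset\operatorname{Aut}(X)$. Choose a prime $p\mid m$. Then $G$ contains $(\Z/p)^r$, and also $(\Z/p^a)^r$ whenever $p^a\mid m$. Theorem~\ref{thm-kollar-zhuang} with $n=3$ gives $r\le 3p/(p-1)$. Hence:
\begin{itemize}
\item $r\le 6$ for $p=2$;
\item $r\le 4$ for $p=3$;
\item $r\le 3$ for $p\ge5$.
\end{itemize}
So if $m$ has a prime factor $\ge5$, then $r\le3$. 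Otherwise $m=2^a3^b$. If $6\mid m$, applying the bound at $p=3$ gives $r\le4$. To rule out $r=4$ in that case, use Theorem~\ref{thm-intro-2}. The group $(\Z/6)^4$ is not of product type, since a factor in $\Cr_1$ is cyclic or dihedral, so a product-type $A_1\times A_2$ needs $A_2$ with $\mathfrak r\ge3$ containing $(\Z/3)^3$, impossible in $\Cr_2$ by \cite{Blanc}. It is not in the exceptional list \eqref{eq-exceptional-groups}. It is not of K3 type either: $(\Z/6)^4/C$ with $C$ cyclic has rank $\ge3$ at the prime $3$, which does not act on a K3 surface \cite{BH}.

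I expect the third alternative of Theorem~\ref{thm-three-types} to be the genuine obstacle. The cleanest route is to invoke the finer statements of \cite{LPZ,LoginovProduct}, which classify which $(\Z/m)^r$ occur. Failing that, apply the second part of Theorem~\ref{thm-kollar-zhuang}: a subgroup $H$ of index $\le p^{n/(p-1)}$ has a fixed point, and its tangent representation then embeds $H$ into $\operatorname{GL}_3$, forcing $\mathfrak r(H)\le3$. At $p=2$ this gives index $\le8$, which handles $4\mid m$ with $m\ne4$. For $m=4$ we have $r\le4$, with $r=4$ occurring (Fermat), and for $m=8$ or $12$ one needs the index bound more carefully.

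Concretely, for $(\Z/2^a)^r$ with $a\ge2$, an index-$\le 2^{3}$ subgroup of $(\Z/2^a)^4$ still has rank $4$ when $a\ge3$ (index at least $2^{4}$ is needed to drop rank), contradicting $\mathfrak r(H)\le3$. Similarly for $3\mid m$ with $9\mid m$, where the index bound is $3^{3/2}<9$. The case $(\Z/4)^5$ is excluded because it contains $(\Z/4)^4$ properly. Applying Theorem~\ref{thm-classification}, the group is realized on $X_4$, and $\operatorname{Aut}(X_4)=\Gamma$ has $2$-rank insufficient for $(\Z/4)^5$. The same argument bounds $r\le4$ for $m=4$. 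For $m=4$ in $\Cr_3(\C)$, $r=4$ is excluded by Corollary~\ref{cor-cremona}, and hence so is every $r\ge4$ when $4\mid m$, since such a group contains $(\Z/4)^4$. For $m=3$, $r\le4$, and for $m=2$, $r\le6$, directly from the bound.

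\emph{Constructions.} Example~\ref{ex-standard-actions} supplies rational realizations:
\begin{itemize}
\item $(\Z/2)^6$ and $(\Z/3)^4$;
\item $(\Z/m)^3$ via the torus action on $\PP^3$ or $(\PP^1)^3$.
\end{itemize}
The Fermat quartic with its standard action provides $(\Z/4)^4$ on a rationally connected threefold. Subgroups inherit the embeddings, which completes both lists.
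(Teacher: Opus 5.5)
\textbf{There is a genuine gap: $(\Z/6)^4$ is never excluded.} Both halves of the theorem need this. Once Theorem~\ref{thm-intro-2} removes product type and the exceptional list, you are in the third alternative of Theorem~\ref{thm-three-types}, and neither of your routes closes it.
\begin{enumerate}
\item No result in \cite{LPZ,LoginovProduct} classifies which $(\Z/m)^r$ occur in that alternative. That it yields nothing new is precisely the open \cite[Conjecture~1.8]{LoginovProduct}.
\item The Koll\'ar--Zhuang fixed-point/index bound works one prime at a time, so it cannot rule out $(\Z/6)^4$. Its primary parts $(\Z/2)^4$ and $(\Z/3)^4$ both act on rational threefolds (Example~\ref{ex-standard-actions}(2),(3)).
\end{enumerate}
The missing idea is an argument that couples the primes $2$ and $3$. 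The paper gets it from the third alternative itself (Lemma~\ref{lem-no-six-four}). There $X$ is a terminal $A\Q$-Fano threefold with $|-K_X|=\varnothing$, hence non-Gorenstein. By Theorem~\ref{thm-local}, the stabilizer of a non-Gorenstein point is generated by at most three elements. So its $2$- and $3$-parts have order at most $2^3$ and $3^3$, and every such orbit has length divisible by $6$. Hence all basket multiplicities have the form $6x_{r,b}$. Then \eqref{eq-orr-volume} and \eqref{eq-orr-c2} give $\sum x_{r,b}\,b(r-b)/r>1$ and $\sum x_{r,b}(r-1/r)<4$, and these are incompatible.

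\textbf{Other steps are wrong but repairable.}
\begin{enumerate}
\item Your index claim for $m=8$ fails. In $(\Z/8)^4$, the subgroup $(\Z/8)^3\times 0$ has index $8\leq 2^3$ and rank $3$. So the fixed-point bound only excludes $(\Z/2^a)^4$ for $a\geq 4$, where any subgroup of index at most $8$ contains $8G$, of rank $4$.
\item For $m=12$ no single-prime bound can work, since $(\Z/4)^4$ and $(\Z/3)^4$ both occur.
\item Both cases, and $m=24$, follow from the device you already use for $(\Z/4)^5$. If $4\mid m$, then $(\Z/m)^4\supset(\Z/4)^4$, so Theorem~\ref{thm-classification} conjugates $(\Z/m)^4$ into $\operatorname{Bir}(X_4)=\Gamma$. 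But $m^4$ does not divide $|\Gamma|=2^{11}\cdot 3\cdot 5$ for $m\neq 4$.
\item The paper first applies Corollary~\ref{cor-kollar-zhuang-uniform} ($m^{r-3}\mid 24$). This leaves only $m=2$ for $r\in\{5,6\}$, and $m\in\{2,3,4,6,8,12,24\}$ for $r=4$.
\end{enumerate}
Finally, two auxiliary claims are false, although their conclusions hold. First, $(\Z/3)^3$ does embed in $\Cr_2(\C)$, via the Fermat cubic surface. Second, it does act on a K3 surface, as a subgroup of $\Z/6\times(\Z/3)^2$. For non-product type, cite \cite[Table~1]{LoginovProduct} instead. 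K3 type need not be excluded at all, since Theorem~\ref{thm-intro-2} already leaves only the third alternative.
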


Thus, among groups of the form $(\Z/m)^r$, the group $(\Z/4)^4$ is
the unique one that embeds into $\operatorname{Bir}(X)$ for some
rationally connected threefold $X$ but does not embed into
$\Cr_3(\C)$.

\textbf{Organization of the paper.}
Section~\ref{sec-preliminaries} collects the preliminary results on
finite group actions, equivariant Mori theory, K3 surfaces and their
invariant lattices, and projective representations.  Proposition
\ref{prop-mmp} reduces Theorem~\ref{thm-classification} to a terminal
$G\Q$-Fano threefold~$X$.  In Section~\ref{sec-gorenstein}, an
invariant anticanonical K3 surface gives
$(-K_X)^3\in\{4,16,36,64\}$.  Prokhorov's results, the classification
of anticanonical models of Gorenstein Fano threefolds and Namikawa's bound for the number of singular points show that $X$ is a smooth
quartic. Proposition~\ref{prop-fermat} identifies its equation and the
action of $G$.  Section~\ref{sec-nongorenstein} excludes the
non-Gorenstein case by combining divisibility of orbit lengths with an
equivariant Euler-characteristic congruence on a resolution and using the
Hirzebruch--Riemann--Roch formula.
In Section~\ref{sec-sharp-bounds} we show that the group $(\mathbb{Z}/6)^4$ cannot act faithfully on a rationally connected threefold, which is needed to prove Theorem~\ref{thm-sharp-bounds}. Finally, in Section \ref{sec-proof-main-results} we prove the main results.

\textbf{Acknowledgements.}
Version 5.6 of OpenAI's ChatGPT was used to explore proof
ideas, test preliminary arguments, and identify possible references.
The author independently checked every source and argument, wrote the
final text, and assumes full responsibility for its content.

\section{Preliminaries}
\label{sec-preliminaries}
We work over the field of complex numbers $\mathbb{C}$. 
We use the language of the minimal model program as in
\cite{KollarMori}.

\subsection{Group actions on varieties}
Let $G$ be a finite group. 
By a \emph{$G$-variety} we mean a variety
$X$ endowed with a (not necessarily faithful) action of $G$.
A $G$-variety $X$ is called $G\mathbb{Q}$-factorial if every
$G$-invariant Weil divisor on $X$ is $\mathbb{Q}$-Cartier. It is called
$G$-factorial if every $G$-invariant Weil divisor is Cartier.  
\begin{definition}
A
$G\mathbb{Q}$-\emph{Mori fiber space} is a $G$-variety together with a $G$-equivariant contraction
$f\colon X\to Z$ with $\dim Z<\dim X$, where $X$ is
$G\mathbb{Q}$-factorial and has terminal singularities, satisfies $\rho^G(X/Z)=1$ and $-K_X$ is ample over
$Z$.  If $Z$ is a point, we say that $X$ is a
$G\mathbb{Q}$-Fano variety.    If, in
addition, $X$ is $G$-factorial, we call it a $G$-Fano variety.
\end{definition}

For threefolds, a $G\Q$-Fano variety is $G$-Fano precisely when it is
Gorenstein.  Indeed, if $X$ is $G$-Fano, then the $G$-invariant divisor
$K_X$ is Cartier, so $X$ is Gorenstein.  Conversely, if $X$ is
Gorenstein, the torsion-freeness of the local divisor class group of
terminal Gorenstein threefold singularities implies that every
$G$-invariant $\Q$-Cartier Weil divisor is Cartier
\cite[Lemma~5.1]{Kawamata88}.

\begin{example}
\label{ex-standard-actions}
We collect some elementary examples of actions of finite abelian groups on rationally connected varieties. 
\begin{enumerate}
\item
For every $m$, one has
$(\Z/m)^3\subset\operatorname{Aut}(\PP^3)$.
\item
Taking the product of three copies of the Klein four-group action on
$\mathbb{P}^1$ gives a faithful action of $(\Z/2)^6$ on $(\PP^1)^3$.
\item
Let
\[
 S_3=\{x_0^3+x_1^3+x_2^3+x_3^3=0\}\subset\PP^3
\]
be the Fermat cubic surface.  The product of the diagonal action of
$(\Z/3)^3$ on $S_3$ with an action of a cyclic group of order $3$ on $\PP^1$ gives a faithful
action of $(\Z/3)^4$ on the rational threefold $S_3\times\PP^1$.
\item
The standard diagonal action of $(\Z/4)^4$ on the Fermat quartic
threefold $X_4$ as in \eqref{eq-fermat} is faithful.
\end{enumerate}
\end{example}

\subsection{Point stabilizers and orbit lengths}
\begin{lemma}[cf. {\cite[Lemma 4]{Po14}}]
\label{lem-faithful-action}
Let a finite group $G$ act faithfully on an algebraic variety $X$.
If $P\in X$ is a fixed point of $G$, then the induced action of $G$ on the
tangent space $T_PX$ is faithful.
\end{lemma}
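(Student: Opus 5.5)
The plan is to reduce to a local statement about the completed local ring $\widehat{\mathcal O}_{X,P}$ and then use linearization of finite group actions in characteristic zero. Suppose $g\in G$ acts trivially on $T_PX$. The goal is to show that $g$ acts trivially on $\widehat{\mathcal O}_{X,P}$. Once that is done, the conclusion is immediate: $X$ is a variety, hence irreducible and reduced, so $g$ agrees with the identity on a formal neighborhood of $P$, and therefore on a Zariski dense set. This forces $g=\mathrm{id}_X$, and faithfulness then gives $g=1$. Equivalently, we may replace $G$ by the cyclic group $\langle g\rangle$ and show that if $\langle g\rangle$ acts trivially on $\mathfrak m/\mathfrak m^2$, then it acts trivially on $\mathcal O_{X,P}$. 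Here $\mathfrak m\subset\mathcal O_{X,P}$ is the maximal ideal and $T_PX=(\mathfrak m/\mathfrak m^2)^\vee$.

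\textbf{Key step.} Since $G$ is finite and we are over $\C$, every $G$-stable finite-dimensional space of functions decomposes into isotypic components. In particular, the surjection $\mathfrak m\to\mathfrak m/\mathfrak m^2$ admits a $G$-equivariant splitting on a $G$-stable finite-dimensional subspace of $\mathfrak m$. To see this, first choose an affine $G$-stable neighborhood of $P$; one exists because $G$ is finite and fixes $P$, provided $X$ is quasi-projective, as it is in our applications. Then average a lift of a basis of $\mathfrak m/\mathfrak m^2$ with the Reynolds operator to obtain elements $f_1,\dots,f_n\in\mathfrak m$. These span a $G$-stable subspace $V$ mapping isomorphically onto $\mathfrak m/\mathfrak m^2$ as a $G$-representation.

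If $g$ acts trivially on $\mathfrak m/\mathfrak m^2$, then $g$ fixes each $f_i$. The $f_i$ generate $\mathfrak m$ modulo $\mathfrak m^2$, so by Nakayama they generate $\widehat{\mathfrak m}$. Hence they topologically generate $\widehat{\mathcal O}_{X,P}$ as a complete $\C$-algebra, that is, $\C[[f_1,\dots,f_n]]\to\widehat{\mathcal O}_{X,P}$ is surjective. Since $g$ is a continuous algebra automorphism fixing all generators, $g$ acts trivially on $\widehat{\mathcal O}_{X,P}$. Because $\mathcal O_{X,P}\hookrightarrow\widehat{\mathcal O}_{X,P}$ is injective by Krull, $g$ acts trivially on $\mathcal O_{X,P}$. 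It therefore fixes the generic point, and so fixes a dense open subset of $X$ pointwise. Since $X$ is separated and reduced, $g=\mathrm{id}$.

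\textbf{Main obstacle.} The only delicate point is the existence of the $G$-equivariant lift $V\subset\mathfrak m$. This uses complete reducibility in characteristic zero together with working on a $G$-invariant affine neighborhood, which is where finiteness of $G$ and the quasi-projectivity of $X$ (automatic for the projective varieties considered here) enter. It is worth noting that smoothness at $P$ is not needed, since the argument uses the Zariski tangent space. No earlier result of the paper is required.
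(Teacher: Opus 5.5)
The paper gives no proof of this lemma and only cites \cite[Lemma 4]{Po14}. Your argument is the standard one and is correct, including at singular points. It lifts $\mathfrak m/\mathfrak m^2$ $G$-equivariantly to $V\subset\mathfrak m$, uses Nakayama to see that $V$ generates $\mathfrak m$, and uses the Krull intersection theorem to get $g^*=\mathrm{id}$ on $\mathcal O_{X,P}$. Then $g^*=\mathrm{id}$ on $\C(X)=\operatorname{Frac}\mathcal O_{X,P}$, so $g=\mathrm{id}_X$ because $X$ is reduced and separated.

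Three small corrections:

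\begin{enumerate}
\item You do not need quasi-projectivity, so the argument gives the lemma in the stated generality. Since $P$ is fixed, $G$ acts on $\mathcal O_{X,P}$ itself and preserves $\mathfrak m$ and $\mathfrak m^2$, so the projection $\pi\colon\mathfrak m\to\mathfrak m/\mathfrak m^2$ is $G$-equivariant. Take any $\C$-linear section $s$ of $\pi$ and put $\bar s=|G|^{-1}\sum_{h\in G}h\circ s\circ h^{-1}$. Then $\bar s$ is still a section and is $G$-equivariant, and $V=\bar s(\mathfrak m/\mathfrak m^2)$ is the required subspace. No $G$-stable affine neighborhood and no local finiteness are needed.
\item It is this section map that must be averaged, not the individual lifted basis vectors. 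Applying the Reynolds operator to each lift separately gives $G$-invariant elements, whose images lie in $(\mathfrak m/\mathfrak m^2)^G$ rather than forming a basis.
\item The phrase ``fixes the generic point'' should read ``acts trivially on the function field'', since every automorphism of an irreducible variety fixes its generic point.
\end{enumerate}
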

\begin{theorem}[{\cite[Theorem 7.3]{LoginovProduct}}]
\label{thm-local}
Let $P\in X$ be a terminal threefold germ and let
$A\subset\Aut(P\in X)$ be a finite abelian subgroup.  Then either $A$
can be generated by at most three elements, or
\[
 A\simeq\Z/2n\times\Z/2m\times (\Z/2)^2
\]
for some positive integers $m,n$.  In the latter case $P\in X$ is a
Gorenstein singularity of type $cA$.
\end{theorem}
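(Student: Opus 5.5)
We would prove Theorem~\ref{thm-local} by linearizing the action of $A$ at $P$ and splitting into three cases: $P\in X$ smooth, Gorenstein singular, or non-Gorenstein. Since $A$ is finite and reductive, the maximal ideal $\mathfrak m_P$ has an $A$-invariant complement to $\mathfrak m_P^2$. This gives an $A$-equivariant embedding $(P\in X)\hookrightarrow(0\in T_PX)$, with $A$ acting linearly and faithfully on $T_PX$ by Lemma~\ref{lem-faithful-action}. A finite abelian subgroup of $\mathrm{GL}(T_PX)$ is diagonalizable, so $\mathfrak r(A)\le\dim T_PX$. If $P$ is smooth, then $\dim T_PX=3$ and we are done.

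\emph{Gorenstein case.} Terminal Gorenstein threefold singularities are isolated cDV hypersurface singularities, so $\dim T_PX=4$. We choose diagonal coordinates $x_1,\dots,x_4$ with $X=\{f=0\}$, where $f$ is $A$-semi-invariant with some character $\chi$. We may assume $\mathfrak r(A)=4$, so some $p$-Sylow subgroup $A_p$ has rank $4$ and contains $(\Z/p)^4$ acting diagonally. The quadratic part $f_2$ is nonzero and semi-invariant with character $\chi$, and we would analyze it by rank. If $\operatorname{rk} f_2\ge 2$, the singularity is $cA$; we write $f_2=x_1x_2+\dots$ and compare the characters of the monomials in $f$. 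If $\operatorname{rk} f_2=1$ (types $cD$, $cE$), isolatedness forces, for each $i$, a monomial of the form $x_i^a$ or $x_i^ax_j$ in $f$ (the Kouchnirenko-type criterion for semi-quasihomogeneous isolated singularities). Each such monomial imposes a linear relation among the characters of the $x_i$ modulo $\chi$. For $(\Z/p)^4$ with $p$ odd, or for the $cD$ and $cE$ shapes, these relations should cut the rank below $4$, which is a contradiction. In the surviving $cA$ case, with $f=x_1x_2+g(x_3,x_4)$, the relation $\chi=\chi_1\chi_2$ together with the relations coming from the isolated plane curve $g$ leaves exactly two extra order-$2$ symmetries. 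We would then read off the structure $\Z/2n\times\Z/2m\times(\Z/2)^2$.

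\emph{Non-Gorenstein case.} Let $r>1$ be the index and take the canonical (index-one) cover $\pi\colon (Q\in Y)\to(P\in X)$, which is $\mu_r$-Galois and \'etale outside $Q$. The group $A$ lifts to a group $\tilde A$ with $1\to\mu_r\to\tilde A\to A\to1$. Here $Y$ is smooth or an isolated cDV point, and $\mu_r$ acts freely on $Y\setminus\{Q\}$. By Mori's classification (terminal lemma), $\mu_r$ acts with weights $(1,-1,a,0)$ (or $(a,-a,1,0)$-type) in suitable coordinates, with $\gcd(a,r)=1$. Since $\tilde A$ commutes with $\mu_r$, it preserves the $\mu_r$-eigenspace decomposition of $T_QY$. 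The quotient $A$ is a subgroup of the diagonal torus modulo $\mu_r$ preserving $f$. The freeness condition, together with the fact that the invariant coordinate is distinguished, should force $\tilde A$ to be generated by $\mu_r$ plus at most three elements in a way that drops the rank of $A$ to $\le3$. In the cyclic quotient case ($Y$ smooth), $A$ is a quotient of a rank-$3$ group containing $\mu_r$ as a cyclic subgroup acting by $(1,-1,a)$, and the rank of $A$ is bounded directly.

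The main obstacle is the Gorenstein character bookkeeping for $cD$ and $cE$ (and for $cA$ with $p$ odd), showing rank $4$ is impossible. There we would first try the following: restrict to $(\Z/p)^4\subset A$ and find an element acting trivially on $f$ but by a nontrivial scalar on a variable that must appear in $f$ to high order. Such an element would either fix a curve of singular points, contradicting isolatedness, or be forced to be trivial.
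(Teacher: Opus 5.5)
The paper does not prove this statement; it quotes it from \cite[Theorem~7.3]{LoginovProduct}, so your sketch has to stand on its own. The overall strategy is reasonable: embed equivariantly in $T_PX$, diagonalize, and pass to the index-one cover in the non-Gorenstein case. However, the decisive steps are either only asserted or rest on the wrong mechanism, so there are genuine gaps.

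\emph{Gorenstein case.} Let $\epsilon_i$ be the standard basis of $\Z^4$, and present $A^\vee=\Z^4/R$ by the coordinate characters $\chi_i$. Then $A$ has $p$-rank four iff $R\subset p\Z^4$, so all exponent vectors of monomials of $f$ must be congruent mod $p$. For odd $p$ your isolatedness argument does give a contradiction. For $p=2$ it does not. The germ $x_1^2+x_2^4+x_3^4+x_4^4$ has an isolated singularity and a rank-one quadratic part, it contains every monomial your criterion demands, and it is invariant under the diagonal group $\Z/2\times(\Z/4)^3$. What excludes $cD$ and $cE$ is terminality:
\begin{itemize}
\item a monomial $x_kx_l$ with $k\neq l$ would force every monomial of $f$ to be divisible by $x_kx_l$, which is absurd;
\item hence all exponents of $f$ are even, and $f$ has no cubic part;
\item but a cDV point with rank-one quadratic part needs a nonzero cubic term after splitting off the square.
\end{itemize}
The same parity remark shows that your $cA$ computation starts from a normal form that never occurs. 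In eigencoordinates of a rank-four group, $x_1x_2$ is never a monomial of $f$. So from $f=x_1x_2+g(x_3,x_4)$ and $\chi=\chi_1\chi_2$ you can only get rank $\le3$. You would miss, e.g., the node $\sum x_i^2$ with $\mu_{2n}\cdot\{\pm1\}^4\simeq\Z/2n\times(\Z/2)^3$. The correct shape is $f=F(x_1^2,\dots,x_4^2)$, with $f_2=\sum_{k\in K}c_kx_k^2$, $|K|\ge2$, and pure powers $x_i^{a_i}$ with $a_i$ even. The relations $2\epsilon_k-2\epsilon_l$ and $a_i\epsilon_i-2\epsilon_k$ lie in $R$. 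They show that $\tfrac12R$ has rank $\ge2$ mod $2$ and that $R$ has rank $\ge2$ mod every odd prime. This is precisely the statement $A\simeq\Z/2n\times\Z/2m\times(\Z/2)^2$.

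\emph{Non-Gorenstein case.} Here ``should force'' is not an argument, and the real difficulty is not addressed.

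First, you need $\tilde A$ to be abelian. This is true: commutators lie in the central $\mu_r$ and act trivially on the line $\omega_Y\otimes k(Q)$, on which $\mu_r$ acts faithfully. Your ``quotient of a rank-$3$ group'' in the cyclic quotient case uses this silently.

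Second, $\tilde A\subset\mathrm{GL}_4$ can have rank four. An example is the $cAx/2$ point $x^2+y^2+z^4+t^4=0$ in $\frac12(0,1,1,1)$, with diagonal lift $\{\pm1\}^2\times\mu_4^2$. Moreover, $A=\tilde A/\mu_r$ still has $2$-rank four exactly when $\mu_r\subset\tilde A^2$, which is automatic for odd $r$. So you must run the Gorenstein analysis for $\tilde A$ on $Q\in Y$ and then exclude this case.
\begin{itemize}
\item For odd $r$, the equation is $\mu_r$-invariant, so $\mu_r$ has weight $0$ on every $x_k$ with $k\in K$. But freeness of $\mu_r$ on $Y\setminus\{Q\}$ allows at most one weight-zero direction, contradicting $|K|\ge2$.
\item For even $r$, let $\gamma$ generate $\mu_r$. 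Freeness and faithfulness on $\omega_Y$ force $\gamma^{r/2}=\operatorname{diag}(1,-1,-1,-1)$ in the eigencoordinates, up to order. If $\gamma\in\tilde A^2$, the order-two characters $\chi^{e/2}-\chi_k$ would be trivial on $\gamma^{r/2}$. Here $\chi^{e/2}:=\sum_i\tfrac{e_i}{2}\chi_i$, $x^e$ is a monomial of $f$, and $k\in K$. This fails when $x^e$ is the pure power of the $+1$ coordinate and $k\in K$ is another index.
\end{itemize}

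Finally, the weight pattern $(a,-a,1,0)$ you invoke does not cover $cAx/4$, which has no invariant coordinate.
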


\begin{corollary}
\label{lem-orbits}
Assume that $G=(\Z/4)^4$ acts faithfully on a terminal threefold $X$. Then every $G$-orbit of closed points on $X$ has cardinality
divisible by four.
\end{corollary}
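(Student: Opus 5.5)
The orbit of a point $P$ has cardinality $[G:G_P]$, where $G_P$ is the stabilizer. Since $|G|=4^4$, it suffices to show that $[G:G_P]\ge 4$, i.e. $|G_P|\le 4^3$. We cannot simply say $G_P$ needs at least four generators: the index could also be $1$ or $2$. So we show directly that every subgroup $H\subset G$ of index at most $2$ fails to act faithfully on a terminal germ. Then $[G:G_P]$, being a power of $2$ other than $1$ and $2$, is divisible by four.

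\textbf{Step 1: the possible stabilizers.} Suppose $[G:H]\le 2$. If $H=G$, then $H\simeq(\Z/4)^4$. If $[G:H]=2$, then $H$ is the kernel of a surjection $G\to\Z/2$. Such a surjection factors through $G/2G\simeq(\Z/2)^4$, so after a change of basis of $G$ we get $H\simeq(\Z/4)^3\times\Z/2$. In both cases $H/2H\simeq(\Z/2)^4$, so $\mathfrak r(H)=4$.

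\textbf{Step 2: apply Theorem~\ref{thm-local}.} Let $P\in X$ be a point whose stabilizer $G_P=H$ has index at most $2$. The group $H$ acts on the germ $P\in X$, and it acts faithfully because $H\subset G$ and $G$ acts faithfully on $X$. Since $X$ is terminal, $P\in X$ is a terminal threefold germ. By Theorem~\ref{thm-local}, $H$ is either generated by at most three elements or isomorphic to $\Z/2n\times\Z/2m\times(\Z/2)^2$.

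The first option is excluded by Step 1. For the second, such a group has exactly two invariant factors of order $2$ after grouping. Its $2$-part is $\Z/2^a\times\Z/2^b\times(\Z/2)^2$, so its $4$-torsion subgroup has order at most $4^2\cdot 2^2=2^6$. But $H$ is a $4$-torsion group of order $2^8$ or $2^7$. Equivalently, $(\Z/4)^4$ and $(\Z/4)^3\times\Z/2$ each have at most one invariant factor $\Z/2$, whereas the exceptional groups have at least two cyclic factors of order exactly $2$ once $2n,2m$ are absorbed. This is a contradiction, so no orbit has length $1$ or $2$.

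\textbf{Main point to check.} The step needing care is the isomorphism-type comparison in Step 2. The comparison I would rely on is the following. Any subgroup of $G$ is killed by $4$. If $\Z/2n\times\Z/2m\times(\Z/2)^2$ is killed by $4$, its order is at most $4\cdot4\cdot2\cdot2=2^6$, which is less than $2^7\le|H|$. This settles it cleanly.
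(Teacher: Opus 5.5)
Your proof is correct and follows the paper's argument: an orbit of length $1$ or $2$ would have stabilizer $(\Z/4)^4$ or $(\Z/4)^3\times\Z/2$, neither of which is allowed by Theorem~\ref{thm-local}, and orbit lengths are powers of two. You make explicit the two checks the paper leaves implicit, namely the structure of index-two subgroups and the order bound $2^6$ that excludes $\Z/2n\times\Z/2m\times(\Z/2)^2$ from an exponent-$4$ group; your phrase ``exactly two invariant factors of order $2$'' should read ``at least two'', but your order argument makes this irrelevant.
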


\begin{proof}
If a $G$-orbit had cardinality one or two, then its stabilizer would be
isomorphic to $(\Z/4)^4$ or $(\Z/4)^3\times\Z/2$, respectively.
Neither group occurs in Theorem~\ref{thm-local}. Hence every orbit has cardinality at least four.
Since the cardinality of every $G$-orbit is a power of two, it is divisible by four.
\end{proof}

The following lemma also follows from
\cite[Proposition~4.3]{Haution}. We give a proof for the reader's convenience.

\begin{lemma}
\label{lem-invariant-zero-cycle}
Let a finite abelian group $A$ act on a projective integral variety $V$
of dimension $n$.  If $L_1,\ldots,L_n$ are $A$-linearized line bundles,
then the class
\[
 c_1(L_1)\cdots c_1(L_n)\cap[V]\in\mathrm{CH}_0(V)
\]
is represented by an $A$-invariant zero-cycle.
\end{lemma}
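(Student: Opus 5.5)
The plan is induction on the number $k$ of line bundles that are made to act. More precisely, I will prove the following stronger statement: for every $A$-invariant cycle class represented by an $A$-invariant cycle $Z=\sum a_i[W_i]$ of dimension $d$ (with $A$ permuting the $W_i$ and $a_i$ constant on orbits), and every $A$-linearized line bundle $L$, the class $c_1(L)\cap[Z]$ is represented by an $A$-invariant cycle of dimension $d-1$. Starting from $[V]$, which is invariant, and applying this $n$ times gives the lemma.

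By linearity it suffices to treat $Z=\sum_{g\in A/\mathrm{Stab}(W)}[gW]$, the orbit sum of one subvariety $W$. Let $B=\mathrm{Stab}(W)\subset A$; it acts on $W$ and $L|_W$ is $B$-linearized. The key step is to produce a nonzero $B$-semi-invariant rational section $s$ of $L|_W$, i.e.\ one with $b^*s=\chi(b)s$ for a character $\chi$ of $B$. For this, replace $L$ by $L\otimes M^{N}$ with $M$ ample and $A$-linearized (the pullback of $\mathcal O(1)$ under an $A$-equivariant embedding, obtained after linearizing a power of an ample invariant bundle such as $\bigotimes_g g^*H$) and $N\gg0$. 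Then $H^0(W,L\otimes M^N|_W)\neq0$ is a finite-dimensional $B$-representation. Since $B$ is abelian, it splits into characters, so an eigenvector $s$ exists. The same argument applies to $M^N$, and $c_1(L)=c_1(L\otimes M^N)-c_1(M^N)$, so it suffices to handle the case of nonzero semi-invariant sections.

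For such an $s$, the divisor $\operatorname{div}(s)$ on $W$ is $B$-invariant, because $b^*s$ is proportional to $s$, and it represents $c_1(L)\cap[W]$. Next I translate by coset representatives: $g\cdot\operatorname{div}(s)$ is a divisor on $gW$ representing $c_1(L)\cap[gW]$, using $g^*L\simeq L$ via the linearization. The sum $\sum_{g\in A/B} g_*\operatorname{div}(s)$ is independent of the choice of representatives, by $B$-invariance, and is therefore $A$-invariant. Pushing forward to $V$ gives the required invariant representative of $c_1(L)\cap[Z]$.

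The main obstacle is the existence of the semi-invariant section. This is where commutativity is used. It is also where care is needed to get an $A$-linearized ample bundle and the difference trick, and to check that $\operatorname{div}$ of an eigensection is genuinely invariant, since the character twist does not change the divisor. The remaining steps are bookkeeping with proper pushforward.
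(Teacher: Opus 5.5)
Your proposal is correct and takes essentially the same route as the paper. Both arguments take a semi-invariant section for the stabilizer of each component, observe that its divisor is invariant, and sum its translates over coset representatives; your iteration over invariant cycles is the paper's induction on $\dim V$ unrolled. The only difference is that you obtain the eigenvector among global sections of $L\otimes M^N$. The paper instead takes it directly in the finite-dimensional span of the orbit of an arbitrary nonzero rational section, which makes the ample twist, the difference trick and the $A$-linearized ample bundle unnecessary.
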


\begin{proof}
We argue by induction on $n$.  The assertion is clear for $n=0$.
Choose a nonzero $A$-semi-invariant rational section $s$ of $L_1$.
Such a section exists because the finite-dimensional span of the orbit of any
rational section contains an eigenvector.  Put $D=\operatorname{div}(s)$.
The divisor $D$ is $A$-invariant and represents
$c_1(L_1)\cap[V]$.  Choose one prime divisor $W$ from each $A$-orbit
of the components of $D$ and let $A_W$ be the setwise stabilizer of
$W$.  We may then write
\[
 D=
 \sum_W m_W\sum_{aA_W\in A/A_W}aW.
\]
The restricted line bundles $L_2|_W,\ldots,L_n|_W$ are
$A_W$-linearized.  By the induction hypothesis, their intersection on
$W$ is represented by an $A_W$-invariant zero-cycle
$\zeta_W=\sum_x \nu_{W,x}[x]$.  For $h\in A_W$, one has
$h_*\zeta_W=\zeta_W$, and hence $a_*\zeta_W$ depends only on the coset
$aA_W$.  Therefore
\begin{equation}
\label{eq-0-cycle}
 c_1(L_1)\cdots c_1(L_n)\cap[V]
 =\sum_W m_W\sum_{aA_W\in A/A_W}a_*\zeta_W
\in \mathrm{CH}_0(V).
\end{equation}
The action of $A$ permutes the cosets in $A/A_W$, so the zero-cycle \eqref{eq-0-cycle} is $A$-invariant.
\end{proof}

\begin{corollary}
\label{cor-intersection-divisibility}
Let $G=(\Z/4)^4$ act faithfully on a projective terminal threefold
$V$.  If $L_1,L_2,L_3$ are $G$-linearized line bundles, then
$
 4\mid(L_1\cdot L_2\cdot L_3).
$
\end{corollary}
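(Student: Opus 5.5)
The plan is to combine Lemma~\ref{lem-invariant-zero-cycle} with Corollary~\ref{lem-orbits}. I would apply Lemma~\ref{lem-invariant-zero-cycle} with $A=G$, $V$ the given projective terminal threefold (integral, since it is a variety) and $n=3$. This shows that the class $c_1(L_1)c_1(L_2)c_1(L_3)\cap[V]\in\mathrm{CH}_0(V)$ is represented by a $G$-invariant zero-cycle $\zeta=\sum_x \nu_x[x]$. The intersection number $(L_1\cdot L_2\cdot L_3)$ is the degree of this class, because degree is well defined on $\mathrm{CH}_0$ of a projective variety. So it equals $\deg\zeta=\sum_x\nu_x$.

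Next I would use $G$-invariance: $g_*\zeta=\zeta$ for all $g\in G$ means the coefficient $\nu_x$ is constant along each $G$-orbit. Hence $\zeta=\sum_{O}\nu_O\sum_{x\in O}[x]$, summed over the finitely many orbits $O$ in the support. Therefore
\[
(L_1\cdot L_2\cdot L_3)=\sum_O \nu_O\,|O|.
\]

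Finally, $G=(\Z/4)^4$ acts faithfully on the terminal threefold $V$, so Corollary~\ref{lem-orbits} gives $4\mid |O|$ for every orbit of closed points. Each term is then divisible by $4$, and so is the sum.

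I do not expect a serious obstacle. The only points to check are that the lemma's zero-cycle consists of closed points, so that Corollary~\ref{lem-orbits} applies, and that the integer coefficients make the divisibility meaningful. Both are immediate, since $\mathrm{CH}_0$ is built from integer combinations of closed points.
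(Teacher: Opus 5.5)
Your proof is correct and matches the paper's argument: Lemma~\ref{lem-invariant-zero-cycle} provides a $G$-invariant zero-cycle representing $c_1(L_1)c_1(L_2)c_1(L_3)\cap[V]$, and Corollary~\ref{lem-orbits} makes every orbit's contribution to its degree divisible by four. You simply write out the orbit decomposition $\sum_O \nu_O\,|O|$ of the degree, which the paper leaves implicit.
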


\begin{proof}
By Corollary~\ref{lem-orbits}, every $G$-orbit of closed points on
$V$ has cardinality divisible by four.  By
Lemma~\ref{lem-invariant-zero-cycle}, the intersection class is
represented by a $G$-invariant zero-cycle.  Its degree is therefore
divisible by four.
\end{proof}

\subsection{Reduction to Fano threefolds}

\begin{proposition}
\label{prop-mmp}
Let $X$ be a rationally connected threefold and let
$G=(\Z/4)^4\subset\operatorname{Bir}(X)$.  Then $X$ is
$G$-birational to a $G\Q$-Fano threefold.
\end{proposition}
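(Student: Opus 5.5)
We regularize the $G$-action and run the $G$-equivariant MMP. This yields a $G\Q$-Mori fiber space $f\colon X'\to Z$ that is $G$-birational to $X$, and we then exclude the cases $\dim Z\geq 1$.

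\emph{Step 1: reduction to a Mori fiber space.} By the standard regularization for finite groups, there is a smooth projective model $\tilde X$ on which $G$ acts biregularly and faithfully. We take a $G$-equivariant resolution of the graph closure of the induced action on a projective model, with $G$ acting by diagonal translates. Since $X$ is rationally connected, $K_{\tilde X}$ is not pseudo-effective. The $G$-equivariant MMP (as in \cite{KollarMori}) therefore terminates with a $G\Q$-Mori fiber space $f\colon X'\to Z$, where $X'$ has terminal singularities. If $Z$ is a point, we are done.

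\emph{Step 2: excluding $\dim Z\in\{1,2\}$.} If $\dim Z\geq1$, consider the exact sequence $1\to G_F\to G\to G_Z\to1$. Here $G_Z$ is the image of $G$ in $\Aut(Z)$, and $G_F$ is the kernel, which acts faithfully on the generic fiber $F$.

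\begin{itemize}
\item If $Z\simeq\PP^1$, then $G_Z$ is a finite abelian subgroup of $\Aut(\PP^1)$, hence cyclic or $(\Z/2)^2$. The kernel $G_F$ acts faithfully on a rationally connected surface over the function field $\C(Z)$, so $G_F$ embeds in $\Cr_2(\C)$, which is the geometric generic fiber case.
\item If $Z$ is a rational surface, then $G_F$ acts faithfully on a conic, hence on $\PP^1$ over $\overline{\C(Z)}$, and $G_F$ is cyclic or $(\Z/2)^2$. Also $G_Z\subset\Cr_2(\C)$.
\end{itemize}

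In both cases we want to conclude that $G=(\Z/4)^4$ is of product type, or at least obtain a contradiction with known ranks. For $\dim Z=2$: $G_Z$ is a quotient of $(\Z/4)^4$ and an abelian subgroup of $\Cr_2(\C)$. By Blanc \cite{Blanc}, such groups have at most two generators of order $4$. Since $G_F\in\{\Z/n,(\Z/2)^2\}$, the quotient $G/G_F$ has rank at least $3$ in its $4$-torsion part, whereas Blanc's list allows no abelian subgroup of $\Cr_2$ containing $(\Z/4)^3$. This contradiction excludes $\dim Z=2$. For $\dim Z=1$: $G_Z\subset\Aut(\PP^1)$ is a quotient of $G$ with at most two generators, and $G_F$ contains a subgroup with at least $(\Z/4)^2\times\ldots$ structure. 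More precisely, $G_F$ surjects onto... we need rank considerations: $G_F$ must contain $(\Z/4)^3$ or $(\Z/4)^2\times(\Z/2)^2$, and neither acts faithfully on a rational surface, by Blanc's classification of abelian subgroups of $\Cr_2(\C)$.

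\emph{A cleaner alternative} for both cases is to invoke Theorem~\ref{thm-intro-2} together with the fact (recalled after it, \cite[Table 1]{LoginovProduct}) that $(\Z/4)^4$ is not of product type. The point is that a Mori fiber space over a positive-dimensional base essentially gives product type: \cite{LoginovProduct} shows precisely that the non-Fano cases of the MMP output lead to product type. Combining this with Theorem~\ref{thm-three-types}, we would argue as follows. Since $G$ is not of product type, the proof of Theorem~\ref{thm-three-types} in \cite{LoginovProduct} excludes conic bundles and del Pezzo fibrations, leaving $\dim Z=0$.

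The main obstacle is making the fibration cases rigorous. The issue is the extension problem: $G$ is not a priori split as $G_F\times G_Z$. I would handle it by pure rank counting, which works regardless of splitting. The $2$-rank and $4$-rank of a quotient or subgroup bound those of the extension, since $\mathfrak r(G/4G)$ and the rank of $G[2]$ are subadditive in exact sequences. Together with Blanc's bound that finite abelian subgroups of $\Cr_2(\C)$ contain no $(\Z/4)^3$, this forces a contradiction in each case.
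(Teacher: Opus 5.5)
Your ``cleaner alternative'' is exactly the paper's proof. The paper runs the $G$-MMP and, if $\dim Z>0$, cites Corollary~3.17 of \cite{LoginovProduct} to conclude that $G$ is of product type, which contradicts Table~1 there. Your main route is genuinely different and more elementary. It never has to produce a splitting $G\simeq A_1\times A_2$ with $A_i\subset\Cr_i(\C)$, which is what ``product type'' demands. It only needs that one of $G_F$, $G_Z$ lies outside Blanc's list, together with the fact that finite abelian subgroups of $\operatorname{PGL}_2(\C)$ are cyclic or $(\Z/2)^2$. This sidesteps the extension problem entirely, at the cost of being specific to $(\Z/4)^4$. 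The paper's citation is shorter and comes from a general structural result.

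That route works, but one step is wrong as written. In the conic bundle case with $G_F\simeq(\Z/2)^2$, one gets $G/G_F\simeq(\Z/4)^2\times(\Z/2)^2$, which contains no $(\Z/4)^3$. So the claim that $G/G_F$ ``has rank at least $3$ in its $4$-torsion part'' is false, and ``$\Cr_2(\C)$ contains no $(\Z/4)^3$'' does not close this case. The same group appears as $G_F$ in the del Pezzo case when $G_Z\simeq(\Z/2)^2$. You list it there, but your final summary again rests only on the absence of $(\Z/4)^3$.

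What closes both cases is the rest of Blanc's list: every finite abelian subgroup of $\Cr_2(\C)$ needing four generators is isomorphic to $(\Z/2)^4$, so $(\Z/4)^2\times(\Z/2)^2\not\subset\Cr_2(\C)$. Your closing assertion that the ``$4$-rank'' is subadditive is also false; this very example gives $4>0+2$. Only $\mathfrak r(A)$ and the rank of $A[2]$ are subadditive, and they are not enough here.

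What you actually need is the explicit enumeration. Since $\operatorname{GL}_4(\Z/4)\to\operatorname{GL}_4(\Z/2)$ is surjective, the possible subgroups $\Z/2$, $\Z/4$, $(\Z/2)^2$ of $G$ are each unique up to $\Aut(G)$, and likewise for the surjections onto these groups. The resulting kernels and quotients are $(\Z/4)^3\times\Z/2$, $(\Z/4)^3$ and $(\Z/4)^2\times(\Z/2)^2$. Finally, to apply Blanc's theorem over $\C$ rather than over $\overline{\C(Z)}$, restrict $G_F$ to a general closed fiber, on which it still acts faithfully.
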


\begin{proof}
By the equivariant minimal model program, the variety $X$ is $G$-birational
to a Mori fiber space $X'\to Z$.  If $\dim Z>0$, then $G$ is of product
type by \cite[Corollary 3.17]{LoginovProduct}, which contradicts the
fact that $G$ is not of product type
\cite[Table 1]{LoginovProduct}.  Hence
$\dim Z=0$, so $X'$ is a $G\Q$-Fano threefold.
\end{proof}

\subsection{K3 surfaces and invariant lattices}
\label{subsec-k3-surfaces}
Let $X$ be a $G\mathbb{Q}$-Fano threefold where $G=(\mathbb{Z}/4)^4$. 
Note that $G$ acts naturally on $H^0(X, -K_X)$. 
If $|-K_X|\neq\emptyset$, then the linear action of the abelian group
$G$ on $H^0(X,-K_X)$ has an eigenvector, and hence there is a
$G$-invariant element $S\in|-K_X|$.

\begin{lemma}
\label{lem-invariant-k3}
Let $X$ be a $G\mathbb{Q}$-Fano threefold where $G=(\mathbb{Z}/4)^4$.  Assume that $|-K_X|\neq \emptyset$. 
Let $S\in|-K_X|$ be a $G$-invariant element.  Then $(X,S)$ is purely log
terminal, and the minimal resolution $\widetilde S$ of $S$ is a smooth
K3 surface.
\end{lemma}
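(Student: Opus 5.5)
We argue by contradiction, following the standard strategy for invariant anticanonical members (as in Prokhorov's and the author's earlier work on elementary abelian groups): first show that $(X,S)$ is plt using group-theoretic constraints, and then deduce the K3 property by adjunction. Suppose $(X,S)$ is not plt. Then we take the log canonical threshold $c=\operatorname{lct}(X,S)\le 1$. Since $X$ is terminal and $S$ is an effective integral divisor, $c>0$. The pair $(X,cS)$ is $G$-invariant and lc but not klt, or it is lc and plt fails at $c=1$. In either case we consider the union $Z$ of minimal log canonical centers of $(X,cS)$ (if $c=1$ and the non-plt locus has codimension one, we use the non-normal locus instead).

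By the standard perturbation trick (Kawamata subadjunction/tie-breaking) combined with $-K_X$ ample, we replace $cS$ by a $G$-invariant boundary $\Delta\sim_{\Q}-\lambda K_X$ with $\lambda<1$. For this boundary the locus $\operatorname{Nklt}(X,\Delta)$ is a disjoint union of $G$-conjugate minimal centers. By Kollár–Shokurov connectedness, applied because $-(K_X+\Delta)$ is ample, this locus is connected, so it is a single $G$-invariant minimal center $Z$.

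We then split into cases by $\dim Z$:
\begin{enumerate}
\item If $Z$ is a point, it is $G$-fixed. This contradicts Corollary~\ref{lem-orbits}, since orbits have length divisible by four.
\item If $Z$ is a curve, it is a smooth rational curve, because minimal lc centers are normal and subadjunction forces $\deg K_Z<0$. The group $G$ acts on $Z\simeq\PP^1$ through a quotient, and the kernel $K$ fixes $Z$ pointwise. Finite abelian subgroups of $\Aut(\PP^1)$ are cyclic or $(\Z/2)^2$, so $K$ has rank at least $2$ and contains a subgroup of index at most $4$ in $(\Z/4)^4$. Any $G$-orbit on $\PP^1$ has length at most $4$, realized by the $(\Z/2)^2$ action with orbits of length $2$ or $4$, and cyclic actions have fixed points. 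Hence there is a point $P\in Z$ whose stabilizer in $G$ has index at most $2$, namely $(\Z/4)^4$ or $(\Z/4)^3\times\Z/2$. This again contradicts Corollary~\ref{lem-orbits}.
\item If $Z$ is a surface, it is a component of $S$ with $c=1$. The failure of plt then occurs along a curve or point of the non-normal or non-plt locus of $S$, and we reduce to the previous two cases by applying inversion of adjunction to the normalization.
\end{enumerate}

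Once $(X,S)$ is plt, adjunction gives that $S$ is normal with klt singularities and $K_S=(K_X+S)|_S\sim 0$, up to the index of $K_X+S$, which is Cartier because $S\sim -K_X$. Thus $K_S\sim 0$ is Cartier, so $S$ has canonical (Du Val) singularities, and the minimal resolution $\widetilde S$ is crepant with $K_{\widetilde S}\sim 0$. Rational connectedness of $X$ together with Kawamata–Viehweg vanishing gives $H^1(X,\mathcal O_X)=H^2(X,\mathcal O_X)=0$, and the sequence $0\to\mathcal O_X(K_X)\to\mathcal O_X\to\mathcal O_S\to0$ with Serre duality then gives $H^1(S,\mathcal O_S)=0$. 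Since Du Val singularities are rational, $\widetilde S$ is a smooth K3 surface.

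The main obstacle is the curve case, together with making the tie-breaking rigorous enough to produce a single invariant minimal center. The remaining cases are then forced by the orbit-length corollary.
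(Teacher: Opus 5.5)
\textbf{Verdict: there is a genuine gap, in your case (3), the surface center.} Your handling of point and curve centers is sound: a $G$-invariant smooth rational curve carries an orbit of length at most $2$, since $G$ acts on it through a cyclic group or $(\Z/2)^2$, and this contradicts Corollary~\ref{lem-orbits}. Your derivation of the K3 property once $(X,S)$ is plt also agrees with the paper.

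\textbf{Where case (3) fails.} Your assertion that a surface center forces $c=1$ is false. Nothing in the hypotheses prevents the invariant member $S$ from being non-reduced, say $S=mS_1+\dots$ with $m\ge 2$. In general such members do occur: on $\PP^3$, four times an invariant coordinate plane is an invariant member of $|-K|$. Then $c=1/m<1$ can be computed by $S_1$ itself. If $(X,cS)$ is plt along $S_1$, the only minimal lc center is the surface $S_1$. There are then no point or curve centers to reduce to, and ``inversion of adjunction to the normalization'' yields nothing.

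Conversely, when $c=1$ you never need the surface case. A non-plt lc pair has an exceptional divisor of discrepancy $-1$, whose center has codimension at least $2$. So you can always start the tie-breaking from a center of dimension at most $1$.

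\textbf{What the genuine surface case requires.} Connectedness makes $S_1$ $G$-invariant. Adjunction gives a klt pair $(S_1,\mathrm{Diff})$ with $-(K_{S_1}+\mathrm{Diff})$ ample, so $S_1$ is a rational surface. The kernel $C$ of $G\to\Aut(S_1)$ acts faithfully on the normal line at a general point, so it is cyclic. Hence $G/C\supseteq(\Z/4)^3$ acts faithfully on a rational surface.

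Divisibility of orbit lengths by four does not exclude this. The two-dimensional Koll\'ar--Zhuang bound does not exclude it either: it allows $(\Z/4)^3=(\Z/4)^2\times\Z/4$, with a factor of order $4$. You need a genuinely external input, such as Blanc's classification of finite abelian subgroups of $\Cr_2(\C)$ \cite{Blanc}, which does not contain $(\Z/4)^3$.

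This is exactly the ``product type'' input that the paper imports wholesale. It cites the proof of Theorem~1.7 in \cite{LoginovProduct}, which shows that a non-plt pair $(X,S)$ forces $G$ to be of product type. The paper does not redo the lc-center analysis.
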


\begin{proof}
By \cite[Proof of Theorem~1.7]{LoginovProduct}, if $(X,S)$ were not
purely log terminal, then $G$ would be of product type, which is not the case.  Thus $(X,S)$ is
purely log terminal.  The surface $S$ has only Du Val singularities,
adjunction gives $K_S\sim0$, and Kawamata--Viehweg vanishing yields
$H^1(S,\mathcal O_S)=0$.  Hence $\widetilde S$ is a smooth K3 surface.
\end{proof}

Restriction of the $G$-action to $S$ gives an exact sequence
\begin{equation}
\label{eq-k3-exact-seq}
 1\longrightarrow C\longrightarrow G\longrightarrow H
 \longrightarrow1,
\end{equation}
where $C=\ker(G\to\Aut(S))$ and $H=G/C$ acts faithfully on $S$.
The group $C$ acts faithfully on the normal line to $S$ at a general
point and is therefore cyclic.  The $H$-action lifts naturally to the
minimal resolution $\widetilde S$.

\begin{proposition}[{\cite{BH}, \cite[Proposition~4.4]{LPZ}}]
\label{prop-bh-k3}
Let $H$ act faithfully on a smooth K3 surface $\widetilde{S}$.  Put
$M=H^2(\widetilde{S},\Z)^H$.  For the groups below, the invariant lattice is as
follows:
\begin{enumerate}
\item $H=(\Z/6)^2\times\Z/2$ and $M\simeq\langle2\rangle$,
\item $H=(\Z/4)^3$ and $M\simeq\langle4\rangle$,
\item $H=\Z/8\times\Z/4\times\Z/2$ and
$M\simeq\begin{pmatrix}0&2\\2&0\end{pmatrix}$,
\item $H=\Z/6\times(\Z/3)^2$ and
$M\simeq\begin{pmatrix}0&3\\3&0\end{pmatrix}$.
\end{enumerate}
\end{proposition}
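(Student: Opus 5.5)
The plan is to reduce everything to explicit models and then compute, using Brandhorst--Hashimoto \cite{BH} as the input. For each of the four groups $H$ in the statement, \cite{BH} gives the following: $H$ is a maximal finite abelian group of automorphisms of a K3 surface, and the pair $(\widetilde S,H)$ is unique up to equivariant isomorphism with an explicit model. The models are:
\begin{itemize}
\item for $(\Z/4)^3$, the Fermat quartic surface $\{\sum x_i^4=0\}\subset\PP^3$ with the diagonal action $\mu_4^4/\mu_4$;
\item for $(\Z/6)^2\times\Z/2$, a double plane branched in the Fermat sextic, with the diagonal $\mu_6$-action and the covering involution;
\item for $\Z/8\times\Z/4\times\Z/2$ and $\Z/6\times(\Z/3)^2$, surfaces in weighted projective space, or double covers of $\PP^1\times\PP^1$ (respectively triple-cover type models), which carry two $H$-invariant elliptic fibrations.
\end{itemize}
Once the model is fixed, $M=H^2(\widetilde S,\Z)^H$ is computed in two steps. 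First I determine $\rk M$. Then I identify an explicit primitive sublattice of $M$ of that rank, and check that it is saturated.

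\textbf{Rank.} Write $H_s=H_{\mathrm{symp}}$, so that $H/H_s$ is cyclic and acts faithfully on $H^{2,0}$. By Nikulin \cite{NikulinK3}, the symplectic group $H_s$ (one of $(\Z/4)^2$, $\Z/6\times\Z/2$, $(\Z/3)^2$, $\Z/4\times\Z/2$) has a known coinvariant lattice. Its invariant lattice $T=H^2(\widetilde S,\Z)^{H_s}$ therefore has known rank, namely $24$ minus the rank of the coinvariant lattice minus $2$. Averaging traces over $H_s$ via the topological Lefschetz formula on the model confirms this, with fixed-point counts on the explicit model. Next I average over a generator $\sigma$ of $H/H_s$ acting on $T$. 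The traces of $\sigma^k$ on $H^2$ come from the Lefschetz number $\chi(\widetilde S^{\sigma^k})-2$, and the fixed loci of non-symplectic automorphisms of the explicit model are visible: curves and points on coordinate hyperplanes. Alternatively, $\dim (T\otimes\C)^{\sigma}=\frac1n\sum_k\operatorname{tr}(\sigma^k|T)$. This calculation should give $\rk M=1$ in cases (1) and (2), and $\rk M=2$ in cases (3) and (4). As a consistency check, $M\subset\operatorname{NS}(\widetilde S)$ contains an $H$-invariant ample class, so $\rk M\ge1$.

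\textbf{Lattice in rank one.} In the Fermat quartic case, $M\otimes\Q$ is spanned by the hyperplane class $h$, with $h^2=4$. If $h=kv$ with $v\in M$, then $v^2=4/k^2$ must be an even integer, which forces $k=1$. Hence $M=\Z h\simeq\langle4\rangle$. For the double sextic plane, the pullback $h$ of a line has $h^2=2$ and is primitive by the same parity argument, so $M\simeq\langle2\rangle$.

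\textbf{Lattice in rank two, and the expected obstacle.} I expect cases (3) and (4) to be the hard part, because there the parity argument does not settle saturation. The plan is to take the classes $f_1,f_2$ of the two $H$-invariant elliptic fibrations (or the pullbacks of the two rulings). These satisfy $f_i^2=0$ and $f_1f_2=2$, respectively $3$. So $\Z f_1+\Z f_2\subset M$ is a sublattice isometric to $U(2)$, respectively $U(3)$, of finite index. Saturation reduces to showing that no class $(af_1+bf_2)/p$ with $p\mid 2$, respectively $p\mid 3$, and $p\nmid\gcd(a,b)$, lies in $H^2(\widetilde S,\Z)$. For this I would intersect such a class with an explicit curve, such as a fibre component or a section-like curve of the fibration, chosen so that the intersection number is non-integral. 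For example, a curve $C$ with $C\cdot f_1=1$ would show that $f_2/2\notin H^2$ cannot be ruled out this way, so the curve has to be chosen carefully for each model. Alternatively, I would compare the discriminant form with the transcendental lattice $M^\perp$, whose discriminant group must be compatible with the free action of $H/H_s$ on the discriminant. If the direct curve check is awkward, the fallback is the argument in \cite[Proposition~4.4]{LPZ}: constrain the discriminant of $M$ by the fact that $M^\perp$ carries a faithful action of $H$ with no invariants, and then exclude the overlattices $U$ and $U(1)$-type by the existence of $H$-orbits of $(-2)$-curves.
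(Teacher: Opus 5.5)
\textbf{There is a genuine gap.} Your plan rests on the first step, and that step is not justified. The statement is about every faithful action of $H$ on every K3 surface. A computation of $M$ on one explicit model therefore proves it only if you already know that every faithful $H$-action is equivariantly isomorphic to the model, or at least induces a conjugate action on $H^2(\widetilde S,\Z)$.

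You attribute this uniqueness to Brandhorst--Hashimoto. As far as I know, that work treats extensions of Mukai's eleven maximal symplectic groups. Those groups are all non-abelian, so it says nothing about $H_{\mathrm{symp}}\in\{(\Z/4)^2,\ \Z/6\times\Z/2,\ (\Z/3)^2,\ \Z/4\times\Z/2\}$. The paper gives no proof and only cites \cite{BH} and \cite[Proposition~4.4]{LPZ}. I take \cite{BH} to be the lattice-theoretic classification of finite groups of K3 automorphisms up to conjugacy of the action on $H^2(\widetilde S,\Z)$ (Brandhorst--Hofmann). That classification records the invariant lattice itself, so the proposition is read off directly and no model is needed.

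For a model-based proof you must first prove, or correctly cite, that each of these $H$ has a single conjugacy class of such actions. This is not a formality. For $H=(\Z/4)^3$ one gets $\rk(H^2)^{H_{\mathrm{symp}}}=4$, hence $\rho(\widetilde S)=20$ with transcendental lattice $\langle 2n\rangle^{\oplus2}$. You would have to control every $n$ for which such a surface carries a faithful $(\Z/4)^3$-action. You would also have to handle possibly non-conjugate copies of $(\Z/4)^3$ in the infinite automorphism group of the Fermat quartic surface. This uniqueness is essentially the whole content of the statement. In addition, for (3) and (4) you never fix a model, so even granting uniqueness those cases are not set up.

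\textbf{Two smaller points.} First, the rank computation is misstated. Lefschetz numbers of the powers of a single $\sigma$ give traces on $H^2$, not on $(H^2)^{H_{\mathrm{symp}}}$. You need $\operatorname{tr}(g\mid H^2)$ for all $g\in H$. More simply, use $\rk M=b_2(\widetilde S/H)$:
\begin{itemize}
\item for the diagonal action on the Fermat quartic, the quotient is the plane $\{y_0+\dots+y_3=0\}$;
\item for the double Fermat sextic, the quotient is again $\PP^2$.
\end{itemize}
Second, the rank-two saturation you expected to be hard is easy. The fibre class of an elliptic fibration on a K3 surface is primitive. In $U(p)$ with $p=2,3$, the only nonzero elements of the discriminant group with even integral square are the multiples of $f_1/p$ and $f_2/p$; for instance $((f_1+f_2)/2)^2=1$ is odd. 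So $\Z f_1+\Z f_2$ is saturated as soon as $f_1,f_2$ are $H$-invariant fibre classes and $\rk M=2$.
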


\begin{proposition}
\label{prop-k3}
Let $X$ be a $G\mathbb{Q}$-Fano threefold where $G=(\mathbb{Z}/4)^4$.  Assume that $|-K_X|\neq \emptyset$. 
Let $S\in|-K_X|$ be a $G$-invariant element.  Let
$C$ and $H$ be as in \eqref{eq-k3-exact-seq}.  Then
\[
 C\simeq\Z/4,
 \qquad H\simeq(\Z/4)^3,
\]
the surface $S$ is smooth and does not meet $\Sing(X)$.  Moreover, $X$ is
Gorenstein.
\end{proposition}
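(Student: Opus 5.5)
First I would determine the pair $(C,H)$. By Lemma~\ref{lem-invariant-k3}, the minimal resolution $\widetilde S$ is a smooth K3 surface with a faithful $H$-action, and $C$ is cyclic. Since $G=(\Z/4)^4$ has exponent $4$, the group $C$ is $1$, $\Z/2$ or $\Z/4$. Correspondingly $H=G/C$ is an abelian group of rank at least $3$ and exponent dividing $4$, of order $4^4/|C|$. The finite abelian groups acting faithfully on K3 surfaces are classified \cite{BH}, \cite{NikulinK3}, and the maximal one of exponent dividing $4$ is $(\Z/4)^3$, of order $64$. This rules out $|C|\le2$, since then $|H|\ge128$. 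For a direct argument I would use that the symplectic part $H_{\mathrm{symp}}$ has cyclic quotient $H/H_{\mathrm{symp}}$ acting on $H^{2,0}$, of exponent dividing $4$, and that by Nikulin $H_{\mathrm{symp}}$ has order at most $16$ with exponent dividing $4$. Hence $|H|\le 64$, which forces $C\simeq\Z/4$ and $H\simeq(\Z/4)^3$.

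Next I would show that $S$ is smooth and avoids $\Sing(X)$, using orbit divisibility. The key tool is Corollary~\ref{lem-orbits}, or rather its refinement through Theorem~\ref{thm-local}: a point whose stabilizer in $G$ needs at least $4$ generators must be a Gorenstein $cA$ point, and stabilizers of rank $\le3$ have index at least $4$. Points of $\Sing(S)$ and of $S\cap\Sing(X)$ form finite $G$-invariant sets.
\begin{itemize}
\item At a point $P\in S$, the stabilizer $G_P$ contains $C$, because $C$ acts trivially on $S$.
\item Suppose $P$ is a singular point of $X$ lying on $S$. Then I would consider the exceptional curves of $\widetilde S\to S$ over the orbit of $P$, which carry the induced $H$-action. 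Their classes span an $H$-stable negative definite sublattice of $H^2(\widetilde S,\Z)$ orthogonal to the pullback of $-K_X|_S$.
\item Any $H$-orbit of exceptional curves contributes an $H$-invariant class, namely the orbit sum. This class lies in $M=H^2(\widetilde S,\Z)^H\simeq\langle4\rangle$ by Proposition~\ref{prop-bh-k3}(2), and it is negative, orthogonal to the ample-type class $-K_X|_S$. But $M$ has rank one and is positive, so no such class exists.
\end{itemize}
Hence $\widetilde S=S$, so $S$ is smooth. This argument needs only that the orbit sum is nonzero with negative square, which holds for effective exceptional combinations.

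For $S\cap\Sing(X)=\varnothing$ I would argue as follows. Since $(X,S)$ is plt and $S$ is smooth and Cartier-like near the point, inversion of adjunction gives that near $P\in S$ the variety $X$ is a quotient singularity of the form $\frac1r(1,-1,a)$ with $S$ the image of a smooth divisor. Moreover $S\sim -K_X$ being $\Q$-Cartier of index $r$ at $P$ would produce a cyclic quotient singular point on $S$ unless $r=1$. More precisely, a plt pair $(X,S)$ with $S$ Cartier forces $X$ smooth along $S$ when $S$ is smooth. The delicate point is whether $-K_X$ is Cartier at $P$. If it is not, $S$ has a singularity $\frac1r(1,-1)$ or worse at $P$ by the classification of plt surface germs with $K_X+S$ Cartier-index considerations, and this contradicts smoothness of $S$. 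If it is Cartier at $P$, then $X$ is Gorenstein terminal at $P$, and a Cartier smooth divisor through $P$ forces $X$ smooth there. So $S\cap\Sing(X)=\varnothing$.

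Finally, for the Gorenstein property: the non-Gorenstein points of $X$ are isolated points off $S$. Since $S$ is an ample $\Q$-Cartier divisor, it meets every curve. If $P$ is a point of index $r>1$, I would use that $S\sim-K_X$ and that $\mathcal O_X(-K_X)$ is then not invertible at $P$. I would pass to the index-one cover, or apply a Kawamata–Viehweg/Riemann–Roch argument, to show that $-K_X$ is Cartier once its member $S$ is disjoint from the basket. Since $S$ is disjoint from $P$, the local section defining $S$ generates $\mathcal O_X(-K_X)$ at $P$, so $-K_X$ is Cartier at $P$, contradicting $r>1$. Thus $X$ is Gorenstein.

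I expect the main obstacle to be the first step: pinning down rigorously, from the cited classification, that no abelian group of order $\ge128$ and exponent dividing $4$ acts on a K3 surface. Everything else reduces to the rank-one invariant lattice and the local argument.
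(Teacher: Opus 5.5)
Your proposal is correct, and from the smoothness of $S$ onward it follows the paper's route. The orbit sum of exceptional curves of $\widetilde S\to S$ would be a nonzero $H$-invariant class of negative square, which is impossible in $H^2(\widetilde S,\Z)^H\simeq\langle4\rangle$. A smooth Cartier divisor cannot pass through a singular point. A point off $S$ is Gorenstein, because $\mathcal O_X(-K_X)\simeq\mathcal O_X(S)$ is trivial there. Your bullets are phrased in terms of ``a singular point of $X$ lying on $S$'', but that argument really concerns singular points of $S$, and Corollary~\ref{lem-orbits} is never needed.

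The genuine difference is the first step. The paper notes that for $|C|\le2$ the group $H$ contains the non-cyclic group $(\Z/4)^3\times\Z/2$ of order $128$. Such a group would lie in a maximal abelian group that is not purely non-symplectic, and the paper then invokes the bound $72$ of \cite[Theorem~3.7]{LPZ}. You use only Nikulin's list \cite{NikulinK3}, giving $|H_{\mathrm{symp}}|\le16$, together with $H/H_{\mathrm{symp}}\hookrightarrow\C^*$, which makes that quotient cyclic of order dividing $\exp(H)\mid4$. Hence $|H|\le64$. Your route is more elementary and uses the exponent of $G$ essentially. The paper's route needs a finer classification but no exponent information. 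Also, $G/C\simeq(\Z/4)^3$ holds because an element of order $4$ in $(\Z/4)^4$ has a unit coordinate and so extends to a basis. So the step you expected to be the main obstacle is settled by your own argument.

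What you do not actually prove is that a non-Gorenstein point $P\in S$ must be a singular point of $S$. Inversion of adjunction only relates plt-ness of $(X,S)$ to klt-ness of $S$; it does not put $X$ in the form $\frac1r(1,-1,a)$. For example, $X=\{xy+z^2+t^3=0\}$ with $S=\{t=0\}$ is plt with $K_X+S$ Cartier, yet $X$ is a non-quotient $cA$ point. When $S$ is smooth, that normal form amounts to the very statement to be proved, and ``classification of plt surface germs'' is not an argument. The paper simply cites \cite[Lemma~5.5]{LPZ} here.

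A direct proof runs as follows.
\begin{enumerate}
\item Let $\pi\colon(X^\sharp,P^\sharp)\to(X,P)$ be the index-one cover. It has degree $r>1$, satisfies $\pi^{-1}(P)=\{P^\sharp\}$, and is \'etale over $X\setminus P$ because terminal singularities are isolated.
\item Then $S^\sharp=\pi^*S\sim-K_{X^\sharp}$ is Cartier on the Cohen--Macaulay germ $X^\sharp$. So $S^\sharp$ is Cohen--Macaulay of dimension two, and by Hartshorne's connectedness theorem its punctured germ at $P^\sharp$ is connected.
\item If $S$ were smooth at $P$, the degree-$r$ \'etale cover $S^\sharp\setminus P^\sharp\to S\setminus P$ of a simply connected punctured ball would split into $r\ge2$ components, a contradiction.
\end{enumerate}
With this lemma supplied, your proof is complete.
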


\begin{proof}
By Lemma \ref{lem-invariant-k3}, the surface $S$ is a K3 surface with at worst du Val singularities. 
Since $C$ is cyclic, its order is at most four.  If $|C|\leq2$,
then $H$ contains a subgroup isomorphic to
$(\Z/4)^3\times\Z/2$.  If this group acted faithfully on a K3 surface,
it would be contained in a maximal finite abelian group acting
faithfully on that surface.  Since it is noncyclic, the latter group
would not be purely non-symplectic.  By \cite[Theorem~3.7]{LPZ}, every
such maximal group has order at most $72$, whereas
$|(\Z/4)^3\times\Z/2|=128$.  Hence
$C\simeq\Z/4$ and $H\simeq(\Z/4)^3$.
By Proposition~\ref{prop-bh-k3}(2), one has
$H^2(\widetilde S,\Z)^H\simeq\langle4\rangle$.
If $S$ had a Du Val
singularity, the class of the reduced union of an $H$-orbit of
exceptional curves on $\widetilde S$ would be nonzero and
$H$-invariant.  It has negative square because the lattice of exceptional curves
of the minimal resolution is negative definite.  This contradicts $H^2(\widetilde S,\Z)^H\simeq\langle4\rangle$. Thus $S$ is smooth. 

 Locally, a terminal Gorenstein threefold
singularity is a hypersurface germ.  At a singular point its defining
equation has no linear term, so any
Cartier divisor through that point is singular there.  On the other
hand, every non-Gorenstein point lies on every
anticanonical member and is a singular point thereof, see e.g.
\cite[Lemma~5.5]{LPZ}.  Hence $S$ does not meet the singular locus of
$X$.  In particular, $X$ has no non-Gorenstein points, and therefore
$X$ is Gorenstein.
\end{proof}

\subsection{Two representation lemmas}

\begin{lemma}
\label{lem-pgl4}
Let $G=(\mathbb{Z}/4)^4$.
Let $C\subset G$ be a subgroup of order at most two.  Then $G/C$ does not embed
into $\operatorname{PGL}_4(\C)$.
\end{lemma}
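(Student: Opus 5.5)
The plan is to show that no finite abelian subgroup of $\operatorname{PGL}_4(\C)$ needs more than three generators unless it has order at most $16$. Then we compare with $G/C$. If $C=1$, then $G/C\simeq(\Z/4)^4$. If $|C|=2$, then $C$ is generated by an element of order two; by choosing a basis of $G$ adapted to $C$ (elementary divisors), we get $G/C\simeq(\Z/4)^3\times\Z/2$. In both cases $\mathfrak r(G/C)=4$ and $|G/C|\geq128$. So it suffices to prove the following: if $A\subset\operatorname{PGL}_4(\C)$ is finite abelian, then either $\mathfrak r(A)\leq3$ or $|A|\leq16$.

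Let $\widetilde A\subset\operatorname{SL}_4(\C)$ be the full preimage of $A$. It is a finite central extension of $A$ by $\mu_4$. Commutators of lifts give an alternating bicharacter $\omega\colon A\times A\to\C^*$. Let $K$ be its kernel. Then $\omega$ is nondegenerate on $A/K$, so $A/K\simeq B\times B$ for some abelian group $B$. The preimage $\widetilde K$ of $K$ is abelian, hence simultaneously diagonalizable, and it is central in $\widetilde A$ modulo scalars. We then use the standard Stone--von Neumann/Heisenberg argument on the decomposition of $\C^4$ into $\widetilde K$-eigenspaces. It should show two things. First, every irreducible constituent of $\C^4$ as an $\widetilde A$-module has dimension $d=\sqrt{|A/K|}=|B|$. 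Second, $\C^4\simeq W\otimes U$, where $W$ is the Heisenberg-type representation of dimension $d$ and $\dim U=4/d$, and $\widetilde K$ acts by scalars on $W$. Consequently $K$ embeds into a diagonal (hence abelian, toral) subgroup of $\operatorname{PGL}(U)=\operatorname{PGL}_{4/d}(\C)$. A finite subgroup of the torus $(\C^*)^{4/d-1}$ is generated by at most $4/d-1$ elements. This identification, checking that $K$ really acts faithfully and diagonally on the factor $U$, is the step I expect to need the most care. I would handle it by noting that the commutant of the irreducible $\widetilde A$-action on $W$ is scalar, so elements of $\widetilde K$ lie in $\C^*\cdot\operatorname{GL}(U)$.

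Now we run through the possibilities $d\in\{1,2,4\}$, using subadditivity $\mathfrak r(A)\leq\mathfrak r(K)+\mathfrak r(A/K)$.
\begin{enumerate}
\item If $d=1$, then $A=K$ lies in a torus of $\operatorname{PGL}_4$, so $\mathfrak r(A)\leq3$.
\item If $d=2$, then $A/K\simeq(\Z/2)^2$ and $K$ embeds in a torus of $\operatorname{PGL}_2$, so $\mathfrak r(A)\leq1+2=3$.
\item If $d=4$, then $K$ is trivial and $|A|=16$.
\end{enumerate}
Since $d$ divides $4$, these are all the cases. In each case $A$ is neither $(\Z/4)^4$ nor $(\Z/4)^3\times\Z/2$, which proves the lemma.
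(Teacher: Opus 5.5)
Your proof is correct and follows essentially the paper's route: you lift to $\operatorname{SL}_4(\C)$, take the radical $K$ of the commutator pairing, use $d^2=|A/K|$ for the dimension of the irreducible summands, and embed $K$ into $(\C^*)^{4/d}/\C^*$ because central lifts act by scalars on each of the $4/d$ summands. The tensor decomposition $W\otimes U$ that you flag as delicate is true but not needed, since Schur's lemma on each irreducible summand already gives this embedding.

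The only real difference is the final count. The paper uses $\exp(A)=4$ to get $|K|\le 4^{4/d-1}$ and hence $|A|\le 64<128$. You instead bound the number of generators, which additionally uses the easy facts $\mathfrak r(G/C)=4$ and $A/K\simeq(\Z/2)^2$ when $d=2$.
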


\begin{proof}
Put $A=G/C$.  Then $|A|\geq128$.  We have $\exp(A)=4$.  Suppose that
$A\subset\operatorname{PGL}(V)$, where
$\dim V=4$.  Since $\operatorname{PGL}(V)=\operatorname{PSL}(V)$, the
inverse image $\widetilde A$ of $A$ in $\operatorname{SL}(V)$ fits into
an exact sequence
\[
 1\longrightarrow\mu_4\longrightarrow\widetilde A
 \longrightarrow A\longrightarrow1.
\]
For $a,b\in A$, choose lifts $\widetilde a,\widetilde b\in\widetilde A$
and set
$
 e(a,b)=\widetilde a\widetilde b\widetilde a^{-1}
 \widetilde b^{-1}\in\mu_4.
$
Since this extension is central, $e(a,b)$ does not depend on the
chosen lifts.  Put
\[
 R=\{a\in A\mid e(a,b)=1\text{ for every }b\in A\}.
\]
By \cite[Chapter~8, Theorem~2.21(ii)]{Karpilovsky} we see that the
$\widetilde A$-module $V$ is a direct sum of irreducible submodules of
the same dimension $d$, where
$
 d^2=|A/R|.
$
For convenience, we give a proof of this fact.  Let $W\subset V$ be
an irreducible $\widetilde A$-submodule, put $d=\dim W$, and let $\chi$
be its character.  If $a\notin R$, choose $b\in A$ such that
$e(a,b)=\zeta\ne1$.  Then
\[
 \widetilde b\widetilde a\widetilde b^{-1}
 =\zeta^{-1}\widetilde a,
\]
and hence $\chi(\widetilde a)=\zeta^{-1}\chi(\widetilde a)=0$.
If $a\in R$, every lift of $a$ is central in $\widetilde A$ and
therefore acts on $W$ by a scalar of absolute value one.  Thus
$|\chi(\widetilde a)|=d$.  Character orthogonality gives
\[
 1=\frac{1}{|\widetilde A|}
 \sum_{g\in\widetilde A}|\chi(g)|^2
 =\frac{|\mu_4||R|d^2}{|\mu_4||A|},
\]
so $d^2=|A/R|$.  Since $W$ was arbitrary, complete reducibility shows
that $V$ is a direct sum of irreducible submodules of dimension $d$.

Thus $d\in\{1,2,4\}$, and the number of summands is $k=4/d$.  The
lift of every element $r\in R$ is central in $\widetilde A$, and hence
acts on the $i$-th summand by a scalar $\lambda_i(r)$.  Since $r^4=1$
in $A$, the fourth power of any lift of $r$ belongs to the central
subgroup $\mu_4$.  Consequently the scalars $\lambda_i(r)^4$ are
independent of $i$, and
\[
 \frac{\lambda_i(r)}{\lambda_1(r)}\in\mu_4.
\]
For $i=2,\ldots,k$, the ratio $\lambda_i/\lambda_1$ is a character
of $R$ with values in $\mu_4$ which is independent of the chosen lifts.
These $k-1$ characters separate the elements of $R$, since an element
on which they all take the value $1$ acts by a scalar on $V$ and is
therefore trivial in $A\subset\operatorname{PGL}(V)$.  Thus $R$ embeds
into $\mu_4^{k-1}$, and hence $|R|\leq4^{k-1}$.
Consequently
\[
 |A|=|A/R||R|\leq d^2 4^{4/d-1}\leq64,
\]
contrary to $|A|\geq128$.
\end{proof}
\begin{lemma}
\label{lem-quadric}
Let $C\subset G$ have order at most two.  Then $G/C$ does not act
faithfully on a smooth quadric threefold $Q\subset \mathbb{P}^4$.
\end{lemma}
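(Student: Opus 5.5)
Set $A=G/C$, so that $|A|\geq128$ and $\exp(A)=4$, and suppose $A$ acts faithfully on a smooth quadric $Q\subset\PP^4$. The plan is to turn this into a projective representation of $A$ in dimension five and then bound $|A|$ by the same character-theoretic argument as in Lemma~\ref{lem-pgl4}. Since $\Aut(Q)=\operatorname{PO}_5(\C)$ acts linearly on $\PP^4$, we get $A\subset\operatorname{PGL}_5(\C)$. Moreover $\operatorname{PO}_5=\operatorname{SO}_5$, because $-1\notin\operatorname{SO}_5$ and $\operatorname{O}_5=\operatorname{SO}_5\times\{\pm1\}$. Hence $A$ lifts isomorphically to a subgroup of $\operatorname{SO}_5(\C)$. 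This gives an honest linear representation of the abelian group $A$ on $V=\C^5$ preserving a nondegenerate quadratic form $q$.

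Next we diagonalize. Since $A$ is abelian, $V$ splits into weight spaces $V_\chi$ for characters $\chi\in A^\vee$. Invariance of $q$ means that $q$ pairs $V_\chi$ with $V_{\chi^{-1}}$, and nondegeneracy gives $\dim V_\chi=\dim V_{\chi^{-1}}$. Faithfulness means that the characters occurring with nonzero multiplicity generate $A^\vee\simeq A$. Now we count. The characters of order $\leq2$ are self-dual, while characters of order $4$ come in pairs $\{\chi,\chi^{-1}\}$ of equal multiplicity. There are five dimensions in total and $\dim V=5$ is odd. Hence at most two pairs of order-$4$ characters can occur, together with at least one self-dual character. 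The number of generators $\mathfrak r(A)\ge 3$, and $A$ has at least three $\Z/4$ factors: if $C$ is trivial then $A=(\Z/4)^4$, and if $|C|=2$ then $A\simeq(\Z/4)^3\times\Z/2$. But the subgroup generated by the occurring characters is generated by at most two characters of order $4$ (one from each pair) together with characters of order $\le2$. Such a subgroup has at most two cyclic factors of order $4$, because its image under $x\mapsto 2x$ is generated by at most two elements. On the other hand, $2A^\vee$ has rank $\geq3$. This is a contradiction.

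The step needing the most care is the lift from $\operatorname{PO}_5$ to $\operatorname{SO}_5$: it has to be an honest group homomorphism, and not merely an element-wise choice of lifts. Via the determinant this is clean: since $\operatorname{O}_5=\operatorname{SO}_5\times\{\pm1\}$, the projection $\operatorname{O}_5\to\operatorname{PO}_5$ restricts to an isomorphism $\operatorname{SO}_5\xrightarrow{\ \sim\ }\operatorname{PO}_5$. Equivalently, one can take the preimage of $A$ in $\operatorname{O}_5$ and intersect it with $\operatorname{SO}_5$. The remaining point to check is that the ``weights pair up'' count correctly uses the rank of $2A^\vee$, which is $4$ or $3$ in the two cases. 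If desired, one can avoid the lift entirely by running the argument of Lemma~\ref{lem-pgl4} on $\widetilde A\subset\operatorname{SL}_5$ with $\mu_5$. The commutator pairing has values in $\mu_5\cap\mu_4=1$, so the extension is abelian, which again gives a faithful diagonal representation with the same conclusion.
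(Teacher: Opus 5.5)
Your proof is correct, but it follows a different route from the paper's.

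\textbf{The paper's route.} The paper proves this in one line: the exceptional isomorphism $\operatorname{PSO}_5(\C)\simeq\operatorname{PSp}_4(\C)\subset\operatorname{PGL}_4(\C)$ reduces the statement to Lemma~\ref{lem-pgl4}. There the real work is the analysis of the commutator pairing of the central $\mu_4$-extension. This is needed because in dimension four a projective representation of an exponent-$4$ group need not come from a linear representation of $A$.

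\textbf{Your route.} You use instead that in odd dimension there is no projective obstruction: $\operatorname{PO}_5\simeq\operatorname{SO}_5$, so $A$ acts linearly and orthogonally on $\C^5$. The weight spaces then pair as $V_\chi\leftrightarrow V_{\chi^{-1}}$. Since each pair of order-$4$ weights uses at least two dimensions, at most two such pairs fit in dimension five. Hence $2A^\vee$ is generated by at most two elements, whereas it has rank $4$ or $3$.

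\textbf{Comparison.} Your argument is self-contained. It needs neither the isomorphism $B_2=C_2$ nor Lemma~\ref{lem-pgl4}, and it gives a sharper structural fact: a finite abelian subgroup of $\operatorname{SO}_5(\C)$ of exponent dividing $4$ has at most two cyclic factors of order $4$. The paper's route is shorter, because Lemma~\ref{lem-pgl4} is needed anyway for $\PP^3$ and the quartic double solid.

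\textbf{Two minor points.}
\begin{enumerate}
\item $\mathfrak r(A)$ is actually $4$ in both cases, not merely at least $3$; this is harmless.
\item Your optional $\mu_5$-variant needs one more sentence. The $2$-Sylow subgroup of $\widetilde A\subset\operatorname{SL}_5$ preserves $q$ only up to a multiplier character $\lambda$. This $\lambda$ is trivial because $\lambda^5=\det^2=1$ and $\lambda^4=1$. Your main argument does not depend on this variant.
\end{enumerate}
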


\begin{proof}
One has
\[
 \Aut(Q)=\operatorname{PSO}_5(\C)
 \simeq\operatorname{PSp}_4(\C)
 \subset\operatorname{PGL}_4(\C).
\]
The assertion therefore follows from Lemma \ref{lem-pgl4}.
\end{proof}

\section{The Gorenstein case}
\label{sec-gorenstein}
Let $X$ be a $G\mathbb{Q}$-Fano threefold where $G=(\mathbb{Z}/4)^4$. 
Assume that $X$ is Gorenstein.  By Riemann--Roch and
Kawamata--Viehweg vanishing, we have
\[
h^0(X,-K_X)=\frac12(-K_X)^3+3>0.
\]
Since $G$ acts on $H^0(X,-K_X)$, there is a $G$-invariant element
$S\in|-K_X|$.  By Proposition~\ref{prop-k3}, the surface $S$ is a smooth
K3 surface and $S\subset X_{\mathrm{reg}}$.

\begin{proposition}
\label{prop-degree}
Let $X$ be a $G$-Fano threefold (so that $X$ is Gorenstein) where $G=(\mathbb{Z}/4)^4$. 
One has
\[
 (-K_X)^3\in\{4,16,36,64\}.
\]
\end{proposition}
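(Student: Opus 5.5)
Let $S\in|-K_X|$ be the $G$-invariant member chosen above. By Proposition~\ref{prop-k3}, $S$ is a smooth K3 surface contained in $X_{\mathrm{reg}}$, the kernel $C$ of the action on $S$ is $\Z/4$, and $H=G/C\simeq(\Z/4)^3$ acts faithfully on $S$. Put $L=-K_X|_S$. This is a line bundle on $S$ with
\[
L^2=(-K_X)^3
\]
by adjunction ($K_S\sim0$ and $S\sim -K_X$), and its class lies in $H^2(S,\Z)^H$. By Proposition~\ref{prop-bh-k3}(2) we have $H^2(S,\Z)^H\simeq\langle4\rangle$. If $h$ is a generator, then $L=kh$ for some integer $k$, because the class of $L$ is $H$-invariant. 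Since $-K_X$ is ample and $S$ is a surface in $X$, the restriction $L$ is ample, so $k\neq0$ and
\[
(-K_X)^3=L^2=4k^2 .
\]

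It remains to bound $k$. For Gorenstein Fano threefolds with terminal singularities one has $(-K_X)^3\le 72$; this is Prokhorov's bound, with equality only for the two toric cases $\PP(1,1,1,3)$ and $\PP(1,1,4,6)$. Hence $4k^2\le72$, so $|k|\le4$, and
\[
(-K_X)^3\in\{4,16,36,64\}.
\]
If one wants to avoid the sharp bound, the weaker classical bound $(-K_X)^3\le 64$ in the smooth or non-toric case would also do. The extremal case $72$ is not of the form $4k^2$ anyway, since $72/4=18$ is not a square.

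The point needing care is the claim that $L$ is an integer multiple of the generator of the invariant lattice. This holds because $H^2(S,\Z)^H$ is a primitive sublattice of rank one and $L$ is $H$-invariant. I also need that the invariant class coincides with the class of an $H$-linearized restriction, which is immediate here since $S$ is smooth and $X$ is smooth along $S$.

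The main obstacle is citing the correct degree bound for terminal Gorenstein Fano threefolds that are only $G$-factorial rather than factorial. Prokhorov's degree bound needs no factoriality assumption, so it applies directly. As a consistency check, Corollary~\ref{cor-intersection-divisibility} gives $4\mid(-K_X)^3$, which agrees with the result.
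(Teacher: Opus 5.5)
Your proof is correct and follows the paper's argument: restrict $-K_X$ to the invariant smooth K3 surface $S$, use $H^2(S,\Z)^H\simeq\langle4\rangle$ to get $(-K_X)^3=4k^2$, and then bound the degree. The only difference is that you cite Prokhorov's bound $(-K_X)^3\le72$ for canonical Gorenstein Fano threefolds, whereas the paper uses $(-K_X)^3\le64$ for the Gorenstein (terminal) case; either bound gives $k\le4$.
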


\begin{proof}
Let $S\in |-K_X|$ be a $G$-invariant smooth K3 surface.  
Put $C=\ker(G\to\Aut(S))$ and $H=G/C$.  
Proposition~\ref{prop-k3} gives $H\simeq(\Z/4)^3$, while
Proposition~\ref{prop-bh-k3}(2) shows that the invariant lattice
$H^2(S,\Z)^H$ is generated by an ample class $D$ with $D^2=4$.
Thus $-K_X|_S=kD$ for some $k>0$,
and
\[
 (-K_X)^3=(-K_X|_S)^2=4k^2.
\]
For Gorenstein Fano threefolds one has
$(-K_X)^3\leq64$. Hence
$1\leq k\leq4$. The claim follows.
\end{proof}

\subsection{Reduction to Picard number one}

We use the following numerical consequence of
\cite[Theorems 1.2 and 6.5]{ProkhorovGFanoII}.

\begin{theorem}
\label{thm-prokhorov}
Let $X$ be a terminal Gorenstein Fano threefold endowed with an action
of a finite group $\Gamma$.  Assume that
\[
 \rk\Cl(X)^\Gamma=1,
 \qquad \rho(X)>1.
\]
Then
\[
 \bigl(\rho(X),(-K_X)^3\bigr)\in
 \left\{
 \begin{array}{c}
 (2,12),(2,20),(2,28),(2,48),\\
 (3,12),(3,30),(3,48),(4,24)
 \end{array}
 \right\}.
\]
\end{theorem}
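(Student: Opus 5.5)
The statement is a bookkeeping consequence of Prokhorov's classification, so the plan is to check that our hypotheses are exactly those of \cite[Theorems 1.2 and 6.5]{ProkhorovGFanoII} and then read off the pairs $(\rho(X),(-K_X)^3)$ from his lists. First I would put $X$ in Prokhorov's setting. The condition $\rk\Cl(X)^\Gamma=1$ says that every $\Gamma$-invariant Weil divisor is $\Q$-linearly equivalent to a rational multiple of $-K_X$. Hence it is $\Q$-Cartier, so $X$ is $\Gamma\Q$-factorial with $\rho^\Gamma(X)=1$. Since $X$ is terminal and Gorenstein, \cite[Lemma~5.1]{Kawamata88} upgrades this to $\Gamma$-factoriality, as in the remark after the definition of $G\Q$-Fano varieties. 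Thus $X$ is a $\Gamma$-Fano threefold in Prokhorov's sense, and the extra assumption $\rho(X)>1$ puts us in the case he treats.

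Next I would split according to whether $X$ is $\Q$-factorial. If it is, then $\rho(X)=\rk\Cl(X)>1$ and \cite[Theorem~1.2]{ProkhorovGFanoII} applies. There $\Gamma$ permutes the extremal rays of $\overline{NE}(X)$ transitively, because $\rho^\Gamma=1$. Prokhorov's analysis of the possible $\Gamma$-orbits of extremal contractions produces an explicit finite list of varieties. For each entry I would record $\rho$ and compute $(-K_X)^3$ directly. For example, $\PP^1\times\PP^1\times\PP^1$ gives $(3,48)$, a $(1,1)$-divisor in $\PP^2\times\PP^2$ gives $(2,30)$... wait, that is $\rho=2$ with degree $48$. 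I will do these computations carefully against his tables and not trust memory: the expected values are $(2,48)$ for $W_6\subset\PP^2\times\PP^2$, $(2,12),(2,20),(2,28)$ for the double covers and intersections in products appearing in his list, $(3,12),(3,30),(3,48)$, and $(4,24)$.

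If $X$ is not $\Q$-factorial, I would pass to a $\Gamma$-equivariant small $\Q$-factorialization or use \cite[Theorem~6.5]{ProkhorovGFanoII}, which treats exactly this non-$\Q$-factorial situation. It again gives a finite list of possibilities with their Picard numbers and anticanonical degrees, and I would check that each entry lies in the displayed set.

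The main obstacle is not conceptual. It is making sure that the hypotheses of Prokhorov's theorems match. In particular, I need to confirm whether his Picard number refers to $\rho(X)$ or to $\rk\Cl(X)$ in the non-$\Q$-factorial case, and that the union of the two lists, restricted to $\rho(X)>1$, yields precisely the eight pairs stated. I would first settle the $\Q$-factorial list, where the degrees are classical from Mori--Mukai, and then match the non-$\Q$-factorial cases of Theorem~6.5 against it.
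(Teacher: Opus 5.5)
Your main line is the same as the paper's. The paper does not prove this statement; it quotes it as a numerical consequence of \cite[Theorems~1.2 and~6.5]{ProkhorovGFanoII}, with no case split and no further argument. Your hypothesis check is what is needed to invoke those theorems: $\rk\Cl(X)^\Gamma=1$ gives $\Gamma\Q$-factoriality and $\rho^\Gamma(X)=1$, and \cite[Lemma~5.1]{Kawamata88} upgrades this to $\Gamma$-factoriality. However, two auxiliary remarks in your plan are wrong, and the first would be a failing step if you relied on it.

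First, in the non-$\Q$-factorial case there is no $\Gamma$-equivariant small $\Q$-factorialization to pass to.
\begin{itemize}
\item Suppose $f\colon X'\to X$ were one. Then $\Cl(X')\simeq\Cl(X)$ as $\Gamma$-modules and $X'$ is $\Q$-factorial, so every $\Gamma$-invariant class in $\Pic(X')\otimes\Q$ is a multiple of $-K_{X'}=f^*(-K_X)$.
\item For an ample $H$ on $X'$, the divisor $\sum_{g\in\Gamma}g^*H$ is $\Gamma$-invariant and ample, so $-K_{X'}$ would be ample.
\item But $-K_{X'}$ is trivial on the curves contracted by $f$, and such curves exist because $X$ is not $\Q$-factorial.
\end{itemize}
Hence $\Gamma$ permutes the small $\Q$-factorializations of $X$ without fixing any of them. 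In this case you can only quote Prokhorov's result directly.

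Second, $\rho^\Gamma(X)=1$ does not by itself make $\Gamma$ act transitively on the extremal rays. It only shows that no $\Gamma$-orbit of rays lies in a proper face of $\overline{NE}(X)$: contracting the smallest invariant face containing such an orbit would give an invariant nef class not proportional to $-K_X$. This forces transitivity when $\overline{NE}(X)$ is simplicial (for instance when $\rho(X)=2$), but not in general, and it is not needed anyway.

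Also, $(2,28)$ is the blow-up of a smooth quadric along a rational normal quartic, not a double cover or an intersection in a product.

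None of these points changes the conclusion. In your plan, as in the paper, the conclusion rests entirely on Prokhorov's lists.
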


\begin{corollary}
\label{cor-rho-one}
A $G$-Fano threefold $X$ where $G=(\mathbb{Z}/4)^4$ has Picard number one.
\end{corollary}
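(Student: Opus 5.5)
We argue by contradiction: assume $\rho(X)>1$ and play Theorem~\ref{thm-prokhorov} off against Proposition~\ref{prop-degree}.

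The first step is to check that Theorem~\ref{thm-prokhorov} applies with $\Gamma=G$. Since $X$ is a $G$-Fano threefold, it is terminal, Gorenstein and $G\Q$-factorial, and it satisfies $\rho^G(X)=1$. We need the stronger condition $\rk\Cl(X)^G=1$. Every $G$-invariant Weil divisor on $X$ is $\Q$-Cartier, so the natural map $\Cl(X)^G\otimes\Q\to\Pic(X)^G\otimes\Q$ is an isomorphism. Because $X$ is a Fano variety with rational singularities, numerical and linear equivalence agree up to torsion, so $\rk\Pic(X)^G=\rho^G(X)=1$. Hence $\rk\Cl(X)^G=1$.

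Theorem~\ref{thm-prokhorov} then gives
\[
(-K_X)^3\in\{12,20,28,48,30,24\}.
\]
Proposition~\ref{prop-degree} gives $(-K_X)^3\in\{4,16,36,64\}$. The two sets are disjoint, which is a contradiction. Therefore $\rho(X)=1$.

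The only point requiring care is the identification $\rk\Cl(X)^G=\rho^G(X)$. It follows directly from $G\Q$-factoriality, because invariant Weil classes are $\Q$-Cartier, together with the fact that on a Fano variety $\Pic$ is finitely generated and $\Pic\otimes\Q=N^1\otimes\Q$. The rest is a comparison of two finite lists.

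Alternatively, one could rule out the degrees directly: every value in Prokhorov's list except $48$ is not of the form $4k^2$, and $48$ is not a perfect square times $4$ either, since $48/4=12$ is not a square. This confirms again that the lists are disjoint.
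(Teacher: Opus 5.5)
Your proof is correct and is the paper's argument: apply Theorem~\ref{thm-prokhorov} with $\Gamma=G$ and observe that its degree list $\{12,20,24,28,30,48\}$ is disjoint from the set $\{4,16,36,64\}$ given by Proposition~\ref{prop-degree}. The only difference is that you spell out why $\rk\Cl(X)^G=\rho^G(X)=1$, which the paper leaves implicit: an invariant class becomes $\Q$-Cartier after averaging over $G$, and $\Pic\otimes\Q=N^1\otimes\Q$ on a Fano threefold.
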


\begin{proof}
Since $X$ is $G$-Fano, one has $\rho(X)^G=1$.  None of the degrees
in Theorem \ref{thm-prokhorov} belongs to
$\{4,16,36,64\}$, contradicting Proposition \ref{prop-degree}.
\end{proof}

\subsection{The anticanonical map}

\begin{proposition}
\label{prop-anticanonical-free}
Let $X$ be a $G$-Fano threefold where $G=(\mathbb{Z}/4)^4$. 
Then the linear system $|-K_X|$ is basepoint-free.
\end{proposition}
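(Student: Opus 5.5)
We want to show that $|-K_X|$ is basepoint-free. The plan is to restrict the linear system to the $G$-invariant smooth K3 surface $S\in|-K_X|$ and use the known structure of base loci of Gorenstein Fano threefolds.

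\textbf{Step 1: reduce to $S$.} By Kawamata--Viehweg vanishing, $H^1(X,\mathcal O_X)=0$. Hence the restriction map
\[
H^0(X,-K_X)\to H^0(S,-K_X|_S)
\]
is surjective. Since $S$ itself is a member of $|-K_X|$, this gives
\[
\Bs|-K_X|=\Bs|-K_X|_S|\subset S.
\]
By the proof of Proposition~\ref{prop-degree}, $-K_X|_S=kD$ with $k\in\{1,2,3,4\}$, where $D$ is the ample generator of $H^2(S,\Z)^H\simeq\langle4\rangle$ and $D^2=4$.

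\textbf{Step 2: base loci on a K3 surface.} By Saint-Donat, a complete linear system $|L|$ with $L$ nef and big on a K3 surface has a base point if and only if $L\sim aE+\Gamma$. Here $E$ is an elliptic curve, $\Gamma$ is a smooth rational curve with $E\cdot\Gamma=1$, and $a\ge2$; in that case $\Bs|L|=\Gamma$.

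The fixed part $\Gamma$ is canonically determined by $L$. Since $L=kD$ is $H$-invariant, $\Gamma$ is $H$-invariant, so its class $[\Gamma]$ is $H$-invariant. But $[\Gamma]^2=-2<0$, while every nonzero class in $H^2(S,\Z)^H\simeq\langle4\rangle$ has square $4n^2>0$. This is a contradiction, so $|-K_X|_S|$ is basepoint-free.

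The same conclusion can be reached on the threefold: the case $\Bs|-K_X|\ne\emptyset$ is classified by Shin/Jahnke--Radloff. In that case the base locus is a smooth rational curve $\Gamma\subset S$ and the threefold is of a specific type, for instance $\rho\ge2$ or $(-K_X)^3\in\{2,10\}$. Those cases are excluded by Corollary~\ref{cor-rho-one} and Proposition~\ref{prop-degree}. The intrinsic K3 argument above is cleaner.

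\textbf{Step 3: conclude.} By Step 1, a base point of $|-K_X|$ would be a base point of $|-K_X|_S|$. Step 2 rules this out, so $|-K_X|$ is basepoint-free.

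The main obstacle is justifying in Step 2 that the fixed component is determined by $L$, and hence $H$-invariant. This follows from the uniqueness of the decomposition of $|L|$ into mobile and fixed parts, together with the $H$-linearization of $L$. One should also check that $H$ acts on $S$ with the invariant lattice computed in Proposition~\ref{prop-bh-k3}(2). This holds because $S$ is smooth, so $\widetilde S=S$.
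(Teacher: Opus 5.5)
Your argument is correct, but it follows a different route from the paper.

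\textbf{The paper's route.} The paper works directly on the threefold. It quotes the structure theorem for base loci of Gorenstein Fano threefolds \cite[Proposition~2.4.1, Remark~2.4.2]{IP}: a nonempty $\Bs|-K_X|$ is either a single point or a curve isomorphic to $\PP^1$. Both cases are then excluded by group-theoretic obstructions. A point would be $G$-fixed, contradicting Corollary~\ref{lem-orbits}. A $G$-invariant rational curve is ruled out by \cite[Corollary~3.14]{LoginovProduct}.

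\textbf{Your route.} You restrict to the invariant smooth K3 surface $S$. Surjectivity of $H^0(X,-K_X)\to H^0(S,-K_X|_S)$ follows from $H^1(X,\mathcal O_X)=0$, exactly as you say. You then apply Saint-Donat and exclude the fixed $(-2)$-curve using the positive definite invariant lattice $H^2(S,\Z)^H\simeq\langle4\rangle$. This is the same lattice trick the paper uses in Proposition~\ref{prop-k3} for exceptional curves.

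\textbf{Comparison.} Your argument is more self-contained.
\begin{enumerate}
\item It uses only facts already established in the paper (smoothness of $S$, $H\simeq(\Z/4)^3$, Proposition~\ref{prop-bh-k3}(2)) together with classical K3 theory.
\item It avoids the external result on invariant rational curves.
\item The isolated-base-point case disappears automatically, since Saint-Donat leaves no isolated base points on a smooth K3 surface.
\end{enumerate}
In effect you are reproving, in this special situation, the relevant part of the structure theorem, which is itself essentially proved by restricting to an anticanonical K3 member. The paper's argument, by contrast, needs neither the smoothness of $S$ nor any lattice input.

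\textbf{Two small remarks.}
\begin{enumerate}
\item The ``main obstacle'' you flag is not really one. $\Bs|-K_X|$ is automatically $G$-invariant because $G$ preserves $|-K_X|$. More generally, the fixed part of $|L|$ depends only on the isomorphism class of $L$, so $h^*L\simeq L$ suffices and no linearization is needed.
\item The side remark attributing specific invariants ($\rho\ge2$ or $(-K_X)^3\in\{2,10\}$) to the Shin/Jahnke--Radloff classification is unsupported and not needed. It is better omitted.
\end{enumerate}
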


\begin{proof}
By
\cite[Proposition~2.4.1, Remark 2.4.2]{IP}, if 
$\Bs|-K_X|\neq\varnothing$ then $\Bs|-K_X|$ is either a single
point or isomorphic to $\PP^1$.  In the first case $G$ has a fixed
point, in the second case $X$ contains a $G$-invariant rational curve.
These two cases are excluded by Corollary \ref{lem-orbits} and
\cite[Corollary~3.14]{LoginovProduct}, respectively.
\end{proof}

For a Gorenstein Fano threefold with a basepoint-free anticanonical
linear system, the anticanonical morphism is the map
\[
 \varphi=\varphi_{|-K_X|}\colon X\longrightarrow\PP^{g(X)+1},
 \qquad (-K_X)^3=2g(X)-2.
\]
The number $g(X)$ is called the \emph{genus} of $X$.  The \emph{Fano
index} $i(X)$ is the largest positive integer such that $-K_X$ is
divisible by $i(X)$ in $\Pic(X)$.  If $i(X)=1$, then
$2\leq g(X)\leq12$ and $g(X)\neq11$.

\begin{proposition}
\label{prop-gorenstein-list}
Let $X$ be a $G$-Fano threefold where $G=(\mathbb{Z}/4)^4$. 
One of the following holds:
\begin{enumerate}
\item $X\simeq\PP^3$,
\item $X$ is a quartic double solid,
\item $X$ is a double cover of a quadric threefold,
\item $X\subset\PP^4$ is a quartic hypersurface,
\item $X\subset\PP^{10}$ is a prime Fano threefold of genus $9$.
\end{enumerate}
\end{proposition}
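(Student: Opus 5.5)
We work from what is already established. By Corollary~\ref{cor-rho-one}, $X$ has Picard number one. By Proposition~\ref{prop-anticanonical-free}, $|-K_X|$ is basepoint-free. By Proposition~\ref{prop-degree}, $(-K_X)^3=2g-2\in\{4,16,36,64\}$, so $g\in\{3,9,19,33\}$. The plan is to split according to the Fano index $i=i(X)$, using the classification of Gorenstein terminal Fano threefolds of Picard number one: Fujita's classification when $i\geq2$, and the Iskovskikh/Prokhorov description of anticanonical models when $i=1$.

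\textbf{Case $i\geq2$.}

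\begin{itemize}
\item If $i=4$, then $X\simeq\PP^3$, with degree $64$.
\item If $i=3$, then $X$ is a quadric with degree $54$. This degree is not in the list, so the case is excluded.
\item If $i=2$, write $-K_X=2A$ with $A^3=d$, so $(-K_X)^3=8d$. The list forces $d\in\{2,8\}$. Degree $d=8$ is impossible for $\rho=1$ del Pezzo threefolds, since $d\leq5$ there. Hence $d=2$, and $X$ is a quartic double solid, which is case (2).
\end{itemize}

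Here $X$ is Gorenstein terminal rather than smooth, so I would check that Fujita's results for del Pezzo threefolds still bound $d\leq5$ when $\rho=1$, or else argue via $h^0(A)$.

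\textbf{Case $i=1$.} The genus satisfies $2\leq g\leq12$, $g\neq11$. Combined with $g\in\{3,9,19,33\}$, this leaves $g=3$ or $g=9$.

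\begin{itemize}
\item For $g=9$, basepoint-freeness and $\rho=1$ should give that the anticanonical map is an embedding. The hyperelliptic and trigonal alternatives for $g=9$ have $\rho>1$ or are excluded by the degree. This yields case (5).
\item For $g=3$, the anticanonical morphism $\varphi\colon X\to\PP^4$ is either birational onto a quartic, or of degree two onto a quadric $Q\subset\PP^4$.
  \begin{itemize}
  \item In the birational case, I would show $\varphi$ is an embedding, so $X$ is a quartic hypersurface, case (4). A small contraction onto a non-normal image is not possible for a basepoint-free $|-K_X|$ with $g=3$ except in the hyperelliptic situation.
  \item In the hyperelliptic case, the image $Q$ is a quadric of rank at least $3$. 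If $Q$ is smooth, this is case (3).
  \end{itemize}
\end{itemize}

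\textbf{The main obstacle} is the singular-quadric sub-case and the non-embedding degenerate cases, for instance a double cover of a cone or of the Veronese cone. These are where Namikawa's bound and the group-theoretic lemmas enter. Everything in this step is $G$-equivariant: $G$ acts on $\PP^4=\PP(H^0(-K_X)^\vee)$ through the image $G/C'$, where $C'$ is the kernel.

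\begin{itemize}
\item If $Q$ is a cone with a vertex, the vertex (a point or a line) is $G$-invariant. A point vertex gives a $G$-orbit of length at most $2$ in its preimage, contradicting Corollary~\ref{lem-orbits}. A line vertex gives a $G$-invariant rational curve, excluded by \cite[Corollary~3.14]{LoginovProduct}.
\item If the image of $G$ is too small, Lemmas~\ref{lem-pgl4} and~\ref{lem-quadric} show that $G$ cannot act with kernel of order at most $2$ on $\PP^3$ or on a smooth quadric.
\end{itemize}

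I would first try these fixed-point and orbit-length arguments on each degenerate case. Where they fail, I would fall back on Namikawa's bound, which limits the number of singular points of $X$, together with orbit lengths divisible by $4$.
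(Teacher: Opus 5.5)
This is essentially the paper's argument: split by the Fano index, use $(-K_X)^3\in\{4,16,36,64\}$ to keep only $\PP^3$ and the quartic double solid when $i(X)\geq 2$, and for $i(X)=1$ reduce to $g\in\{3,9\}$ and the very ample versus hyperelliptic dichotomy. The steps you leave as \emph{should give} or \emph{I would show} are exactly the ones the paper settles by citation: \cite[Theorem~3.2]{ProkhorovGFanoI} for $i=2$, \cite[Theorem~2.12]{PCS} for $g=3$, and \cite[Theorem~4.2]{ProkhorovRationalityI} for very ampleness when $g=9$ (trigonality is beside the point there, since trigonal models are still anticanonically embedded).

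Your \emph{main obstacle} is not part of this statement, because item (3) allows singular quadrics. The paper removes the cone cases only later, in Proposition~\ref{prop-hyperelliptic}. There a vertex line is excluded because $X$ would be singular along its preimage, while terminal singularities are isolated. Your appeal to an invariant rational curve does not apply as stated, since that preimage need not be rational.
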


\begin{proof}
If $i(X)>2$, apply
\cite[Section~3.1]{ProkhorovGFanoI}. If $i(X)=2$, then
\cite[Theorem~3.2]{ProkhorovGFanoI} applies. Together with
Proposition \ref{prop-degree}, these results leave only
$X\simeq\PP^3$ and the del Pezzo threefold of degree two.  The latter
is a double cover of $\PP^3$ branched over a quartic surface and is
called a quartic double solid.

Assume that $i(X)=1$.  The bound $g(X)\leq12$ excludes degrees $36$
and $64$.  If $(-K_X)^3=4$, then $g(X)=3$.  If $-K_X$ is very ample,
its image in $\PP^4$ is a quartic hypersurface.  Otherwise,
\cite[Theorem~2.12]{PCS} shows that the anticanonical morphism is a
double cover of a variety of minimal degree, in this case the latter
is a quadric threefold in $\PP^4$.  If $(-K_X)^3=16$, then $g(X)=9$,
and \cite[Theorem~4.2]{ProkhorovRationalityI} shows that $-K_X$ is very
ample.  Thus the anticanonical map embeds $X$ as a prime Fano
threefold of genus $9$ in $\PP^{10}$.
\end{proof}

\begin{proposition}
\label{prop-hyperelliptic}
Let $X$ be a $G$-Fano threefold where $G=(\mathbb{Z}/4)^4$. 
Then $X$ is neither $\PP^3$ nor a quartic double solid nor a
double cover of a quadric threefold.
\end{proposition}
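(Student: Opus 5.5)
The plan is to treat the three cases separately. In each one we either embed $G$ (or a quotient $G/C$ with $|C|\leq 2$) into $\operatorname{PGL}_4(\C)$ or into the automorphism group of a smooth quadric, and then apply Lemmas~\ref{lem-pgl4} and~\ref{lem-quadric}.

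\emph{Case $X\simeq\PP^3$.} Here $\Aut(X)=\operatorname{PGL}_4(\C)$, and $G$ acts faithfully. Taking $C=\{1\}$, Lemma~\ref{lem-pgl4} says that $G$ does not embed into $\operatorname{PGL}_4(\C)$. This case is immediate.

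\emph{Double cover cases.} Let $\pi\colon X\to Y$ be the double cover, with $Y=\PP^3$ for a quartic double solid and $Y=Q\subset\PP^4$ a quadric threefold otherwise. In both cases $\pi$ is given by a complete linear system: $|{-\tfrac12K_X}|$ for the quartic double solid (using $\Pic(X)=\Z\cdot\tfrac12(-K_X)$ by Corollary~\ref{cor-rho-one}), and $|-K_X|$ in the genus-$3$ hyperelliptic case. The line bundles inducing $\pi$ are therefore $G$-invariant, so $\pi$ is $G$-equivariant for an induced action of $G$ on $Y$. Let $C$ be the kernel of $G\to\Aut(Y)$. Then $C$ is contained in the Galois group of $\pi$, so $|C|\leq2$ and $G/C$ acts faithfully on $Y$.

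\begin{itemize}
\item If $Y=\PP^3$, then $G/C$ embeds into $\operatorname{PGL}_4(\C)$, which contradicts Lemma~\ref{lem-pgl4}.
\item If $Y$ is a quadric, we first need $Y$ to be smooth, and then Lemma~\ref{lem-quadric} gives the contradiction.
\end{itemize}

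\emph{The quadric step.} The main point needing care is that the quadric $Y$ might be singular. $Y$ cannot have corank $\geq 2$: then $X$ would have non-isolated singularities along the preimage of a singular line, which is impossible for a terminal threefold. If $Y$ is a cone over a smooth quadric surface with vertex $v$, then $v$ is $G/C$-fixed, so $\pi^{-1}(v)$ is a $G$-orbit of cardinality at most $2$. This contradicts Corollary~\ref{lem-orbits}. Hence $Y$ is smooth.

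I would double-check two points here:
\begin{itemize}
\item The equivariance of $\pi$. Since $G$ acts on $\Pic(X)\simeq\Z$ trivially, the action preserves the relevant complete linear system and induces a projective action on its target.
\item The statement that the kernel of $G\to\Aut(Y)$ has order at most $2$. An element acting trivially on $Y$ preserves each fibre of $\pi$, so it is either the identity or the covering involution.
\end{itemize}
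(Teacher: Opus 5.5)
Your proposal is correct and follows essentially the same route as the paper: Lemma~\ref{lem-pgl4} handles $\PP^3$ and the quartic double solid (via $C\subset\langle\iota\rangle$), Lemma~\ref{lem-quadric} handles a smooth quadric, and a singular quadric is ruled out because terminal singularities are isolated and the vertex gives a $G$-orbit of size at most two, contradicting Corollary~\ref{lem-orbits}. Your added remarks on why $\pi$ is $G$-equivariant and why the kernel lies in the covering group only make explicit points the paper leaves implicit.
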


\begin{proof}
The case $X=\PP^3$ is excluded by Lemma \ref{lem-pgl4}.

Let $X$ be a quartic double solid.  Its fundamental linear system
$|L|$, where $-K_X=2L$, defines the double cover $X\to\PP^3$.  Let
$\iota$ be its deck involution and put
$C=\ker(G\to\Aut(\PP^3))$.  Then $C\subset\langle\iota\rangle$, so
$|C|\leq2$, and $G/C$ acts faithfully on $\PP^3$.  This contradicts
Lemma~\ref{lem-pgl4}.

Let $X$ be a double cover of a quadric threefold $Q$, let $\iota$ be
the deck involution, and put $C=\ker(G\to\Aut(Q))$.  Then
$C\subset\langle\iota\rangle$, so $|C|\leq2$, and $G/C$ acts
faithfully on $Q$. If $Q$ is smooth, Lemma
\ref{lem-quadric} applies.  If the quadric is singular, terminality of
$X$ forces it to have rank four.  Indeed, the inverse image of
$\Sing(Q)$ is contained in $\Sing(X)$, and terminal threefold
singularities are isolated, so $\Sing(Q)$ is zero-dimensional.  Thus $Q$
is a cone with vertex $P$.  The point $P$ is fixed by $G/C$, and its
inverse image consists of at most two points of $X$.  This gives a
$G$-orbit of cardinality at most two, contradicting Corollary
\ref{lem-orbits}.
\end{proof}

\subsection{Singular anticanonical models}
Let $X$ be a $G$-Fano threefold where $G=(\mathbb{Z}/4)^4$.  If $X$ is
singular, then by \cite[Theorem~11]{Namikawa} it admits a smoothing
$X_t$, and \cite[Theorem~13]{Namikawa} gives
\begin{equation}
 |\Sing(X)|\leq20-\rho(X_t)+h^{1,2}(X_t).
 \label{eq-namikawa}
\end{equation}

\begin{proposition}
\label{prop-embedded-smooth}
The quartic and the genus-nine threefold in Proposition
\ref{prop-gorenstein-list} are smooth.
\end{proposition}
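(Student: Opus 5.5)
The plan is to combine Namikawa's bound \eqref{eq-namikawa} with the orbit structure of $G$ on $\Sing(X)$. First we record the relevant invariants of a smoothing $X_t$. In both cases $X$ is a terminal Gorenstein Fano threefold of Picard number one (Corollary~\ref{cor-rho-one}), and a smoothing preserves $\rho$, $(-K)^3$ and the index. Hence $X_t$ is a smooth quartic or a smooth prime Fano threefold of genus $9$, with $\rho(X_t)=1$. For these one has $h^{1,2}=30$ and $h^{1,2}=3$ respectively. So \eqref{eq-namikawa} gives $|\Sing(X)|\le 49$ for the quartic and $|\Sing(X)|\le 22$ for the genus-nine threefold.

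The next step is to show that $G$-orbits in $\Sing(X)$ are long. By Proposition~\ref{prop-k3} the invariant K3 surface $S$ misses $\Sing(X)$, but this alone does not bound orbits. Instead we use Theorem~\ref{thm-local}. A singular point $P$ has stabilizer $G_P\subset(\Z/4)^4$ acting faithfully on the germ, by the analogue of Lemma~\ref{lem-faithful-action}. The subgroups of $(\Z/4)^4$ not generated by three elements are exactly those of rank four. The only rank-four subgroups of $(\Z/4)^4$ of the form $\Z/2n\times\Z/2m\times(\Z/2)^2$ have order at most $64$. Thus either $G_P$ has rank at most $3$, so $|G_P|\le 64$ and the orbit has length at least $4$, or $G_P\simeq(\Z/4)^2\times(\Z/2)^2$ or smaller. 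This only gives orbits of length $\ge4$, which is too weak, so more local information is needed.

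The key refinement is to use the tangent representation. The point $P$ is an isolated hypersurface singularity (type $cA$ or worse), and $G_P$ acts faithfully on the Zariski tangent space $T_PX$. That space has dimension $4$ and carries an invariant quadratic-or-higher equation. For the quartic, $G_P$ fixes $P\in\PP^4$ and acts on $\C^5$, hence on the affine chart $\C^4$. The group $G_P$ is an abelian group of exponent dividing $4$, diagonalizable in $\operatorname{GL}_4$, and it preserves the local equation $f$ with nonzero quadratic part. I would argue that a diagonal group preserving the quadratic part of rank at least $2$ (isolated cDV) embeds into a small group. Concretely, the eigen-lines paired by the quadratic form force the characters to satisfy relations $\chi_i\chi_j=\chi$, which cuts the rank down so that $|G_P|\le 16$ or so. This forces orbits of length at least $16$.

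With orbit lengths at least $16$, the count $|\Sing(X)|\le 22$ leaves at most one orbit, of length $16$, for genus $9$. For the quartic, $|\Sing(X)|\le49$ leaves orbits of total length $16$, $32$ or $48$. We would then exclude these by a finer argument. Each node, or each singular point, lowers the defect: Namikawa's bound improves when the singular points impose independent conditions. Alternatively, for the quartic, we use that $X$ is a $G$-invariant quartic and that $G$, with kernel in the scalars, embeds into $\operatorname{PGL}_5$. Lifting to $\operatorname{GL}_5$, Lemma~\ref{lem-pgl4}-type arguments show $G$ acts diagonally in some basis. The invariant quartics are then spanned by the monomials $x_i^4$ together with possibly few others, and a direct check shows the general such quartic is smooth. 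Singular members would force a $G$-orbit of length at most $4$ with large stabilizer, contradicting the above. For genus $9$ one can similarly use the Lagrangian Grassmannian description $LG(3,6)\cap\PP^{10}$ and the $G$-action on $\C^6$.

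The main obstacle is the orbit-length estimate in the third paragraph. Turning Theorem~\ref{thm-local} and the invariant-equation constraint into a stabilizer bound of order $16$ is the part I am least sure of. I would try first a case analysis on the rank of the quadratic part of $f$ at $P$.
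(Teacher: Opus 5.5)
There is a genuine gap. It is exactly the step you flagged: the stabilizer bound in your third paragraph cannot come from local information at $P$. Theorem~\ref{thm-local} gives only $|G_P|\le 64$, and the tangent-space and quadratic-part constraint does not improve this. The character $\chi$ of the local equation is not forced to be trivial, so a relation $\psi_i+\psi_j=\chi$ by itself imposes nothing on the $\psi_i$. Combined relations such as $\psi_1+\psi_2=\psi_3+\psi_4$ still allow a large group. For instance, $(\Z/4)^3$ acts faithfully and diagonally on the node $x_1x_2+x_3x_4=0$ with characters $e_1,e_2,e_3,e_1+e_2-e_3$, and the equation is semi-invariant of weight $e_1+e_2$. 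So local analysis yields only orbits of length $\ge 4$, which is compatible with $|\Sing(X)|\le 49$ and $|\Sing(X)|\le 22$.

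Even granting orbits of length $\ge16$, you would still have to rule out one orbit of length $16$ in genus $9$, and total orbit lengths $16$, $32$, $48$ for the quartic. The proposal does not do this; ``Namikawa's bound improves'' is not an argument. Your quartic fallback can be made to work by running the weight argument of Proposition~\ref{prop-fermat}:
\begin{itemize}
\item a nonzero-weight semi-invariant quartic is a monomial, impossible for irreducible reduced $X$;
\item a weight-zero one is $\sum a_ix_i^4$;
\item one vanishing $a_i$ gives a cone whose vertex is $G$-fixed, contradicting Corollary~\ref{lem-orbits};
\item two vanishing $a_i$ give non-isolated singularities.
\end{itemize}
As written, though (``possibly few others'', ``the general such quartic is smooth''), it does not prove that every such quartic is smooth. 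The genus-$9$ suggestion via $LG(3,6)$ would need an equivariant Mukai embedding for a possibly singular $X$ and a lift of the action to $\C^6$, and neither is justified.

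The missing idea is global, and you discarded it too quickly when you wrote that $S\cap\Sing(X)=\varnothing$ ``alone does not bound orbits''. Proposition~\ref{prop-k3} applies to \emph{every} $G$-invariant member of $|-K_X|$. In particular it applies to the zero divisor of each vector of an eigenbasis $s_0,\dots,s_N$ of $H^0(X,-K_X)$. Hence at a singular point $P$ all coordinates $s_i(P)$ are nonzero. Any $g\in G_P$ acts diagonally in these coordinates and fixes $[s_0(P):\dots:s_N(P)]$, so all its eigenvalues coincide. Therefore $g$ acts trivially on the ambient projective space: $-K_X$ is very ample in both cases, and $X$ spans that space. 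So $g$ acts trivially on $X$. Thus $G_P$ is trivial and $|G\cdot P|=256$, which contradicts the Namikawa bound you correctly derived. This is the paper's argument, and your Namikawa step enters exactly there.
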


\begin{proof}
Suppose that $P\in\Sing(X)$.  Choose an eigenbasis
$s_0,\ldots,s_N$ of $H^0(X,-K_X)$.  Each divisor $(s_i=0)$ is
$G$-invariant and hence, by Proposition~\ref{prop-k3}, does not
contain $P$.  Thus $s_i(P)\neq0$ for every $i$.

We can identify
$\Pic(X)$ with $\Pic(X_t)$, in particular $\rho(X_t)=\rho(X)=1$, cf.
\cite[Theorem~1.4]{JahnkeRadloff}.
The canonical class and its cube are preserved under the smoothing.
Moreover, after shrinking the base, relative very ampleness of the
anticanonical bundle shows that $X_t$ is a smooth quartic threefold in
the quartic case and a smooth prime Fano threefold of genus $9$ in the
other case.  The corresponding values of $h^{1,2}(X_t)$ are $30$ and
$3$, respectively, see \cite[Table~12.2]{IP}.  Hence
\eqref{eq-namikawa} gives $|\Sing(X)|\leq49$ in the first case and
$|\Sing(X)|\leq22$ in the second.  Since
$G\cdot P\subset\Sing(X)$, in either case
$|G\cdot P|<256=|G|$.  Thus the stabilizer $G_P$ contains a nontrivial
element $g$.

In $\mathbb{P}(H^0(X,-K_X)^\vee)$, we have
\[
 P=[s_0(P):\ldots:s_N(P)]
\]
with all coordinates nonzero.  Since $g$ fixes $P$ and acts diagonally
in the chosen coordinates, its diagonal entries are equal.  Hence $g$
acts trivially on the ambient projective space, contrary to
faithfulness.  Thus $X$ is smooth.
\end{proof}

\subsection{The projective model}

\begin{proposition}
\label{prop-smooth-quartic}
Let $X$ be a $G$-Fano threefold where $G=(\mathbb{Z}/4)^4$. 
Then $X$ is a smooth quartic hypersurface in $\PP^4$.
\end{proposition}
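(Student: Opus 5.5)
By Propositions \ref{prop-gorenstein-list}, \ref{prop-hyperelliptic} and \ref{prop-embedded-smooth}, it remains to exclude the case where $X\subset\PP^{10}$ is a smooth prime Fano threefold of genus $9$. Here $(-K_X)^3=16$, $\rho(X)=1$, and $G$ acts on $H^0(X,-K_X)\simeq\C^{11}$. The plan is to show that $G$ cannot act faithfully on such an $X$.

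I would try two routes in order. The first is through automorphism groups. A smooth prime Fano threefold of genus $9$ is a linear section of the Lagrangian Grassmannian $\mathrm{LG}(3,6)\subset\PP^{13}$. By Mukai's description, it is determined by a genus-$3$ curve $\Gamma$ together with a rank-two bundle on $\Gamma$, the curve $\Gamma$ being obtained by homological projective duality. The first step is to check that $\Aut(X)$ acts faithfully on the associated data. The key is then to compare with known bounds: $\Aut(X)$ is finite and embeds into $\Aut(\Gamma)$, possibly up to a small kernel. For a genus-$3$ curve, any abelian subgroup of the automorphism group has order at most $16$; indeed, the largest abelian groups acting on a genus-$g$ curve have order at most $4g+4$. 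This is far below $256$, which would exclude the case. I expect this step to be the main obstacle: the precise relation between $\Aut(X)$ and $\Aut(\Gamma)$ has to be cited correctly, and I would look for a result in the literature on automorphisms of Fano threefolds of genus $9$ (Kuznetsov--Prokhorov--Shramov) showing that $\Aut(X)\hookrightarrow\Aut(\Gamma)$.

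If that citation is unavailable, the fallback is a self-contained argument via the invariant K3 surface. By Proposition \ref{prop-k3}, there is a $G$-invariant smooth $S\in|-K_X|$ with $H=G/C\simeq(\Z/4)^3$. By Proposition \ref{prop-bh-k3}, $\Pic(S)^H=\Z D$ with $D^2=4$. But $-K_X|_S$ is $H$-invariant with square $16$, so $-K_X|_S=2D$. This is already consistent with the degree computation, so more is needed. I would use the Mukai bundle instead: $X$ carries a unique stable rank-$3$ bundle $E$ with $c_1(E)=-K_X$, hence one that is $\Aut(X)$-invariant. Its restriction to $S$ is an $H$-invariant rigid bundle with $c_1=2D$. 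One then has to derive a contradiction from the fact that the Mukai vector $v=(3,2D,s)$ must satisfy $v^2=-2$. Here $v^2=c_1^2-2rs=16-6s$, which gives $s=3$, and this is consistent, so the argument is inconclusive. The remaining option is to show that $D$ itself is the restriction of a class from $X$: invariant-lattice arguments and Lefschetz give $\Pic(X)\to\Pic(S)$ injective with saturated image in the invariant part, and this would contradict $-K_X$ being primitive in $\Pic(X)$ if $D$ came from $X$.

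Since this fallback is uncertain, I would rely primarily on the first route, with the orbit-length divisibility of Corollary \ref{lem-orbits} as a secondary check. Concretely, the maps $G\to\Aut(X)\to\Aut(\Gamma)$ would force $|G|\le 16$ (up to the kernel), which contradicts $|G|=256$. Hence $X$ is a smooth quartic, as claimed in Proposition \ref{prop-smooth-quartic}.
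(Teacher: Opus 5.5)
You correctly reduce to excluding the smooth prime Fano threefold of genus $9$. However, the route you say you would rely on has a genuine gap. The step $G\to\Aut(X)\to\Aut(\Gamma)$ with a controlled kernel is neither proved nor tied to a precise reference, and you leave the kernel unbounded. Abelian subgroups of $\Aut(\Gamma)$ have order at most $16$, so you would need the kernel to have order less than $16$ to contradict $|G|=256$. You do not establish this, so the first route yields no contradiction. The appeal to Corollary~\ref{lem-orbits} as a ``secondary check'' adds nothing concrete, and the Mukai-vector computation is, as you note, a dead end.

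Your fallback already contains the paper's argument and is one sentence short of complete. You state that the image of $\Pic(X)\to\Pic(S)$ is saturated but never use it, and that is exactly what shows $D$ comes from $X$.

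The argument runs as follows. You have $-K_X|_S=2D$, where $D$ generates $H^2(S,\Z)^H\simeq\langle4\rangle$. Since $S$ is a smooth ample divisor on the smooth threefold $X$, Lefschetz shows that $\Pic(X)=H^2(X,\Z)\to H^2(S,\Z)$ is injective with torsion-free cokernel. Because $2D$ lies in the image, so does $D$. Thus $D=M|_S$ for some $M\in\Pic(X)$, and injectivity gives $-K_X=2M$. This contradicts $\Pic(X)=\Z\cdot(-K_X)$ for a prime Fano threefold of genus $9$.

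The paper says the same thing in different terms: the generator $L$ of $\Pic(X)$ restricts to a primitive $H$-invariant class, so $(L|_S)^2=4$, i.e.\ $i(X)L^3=4$. This is impossible for $i(X)=1$ and $L^3=16$.

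So the correct proof is the one you labelled uncertain. Lead with it and drop the automorphism-group route, which would need substantially more input to become a proof: Mukai's description, faithfulness of the action on the associated data, and control of the kernel.
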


\begin{proof}
By Propositions \ref{prop-hyperelliptic} and
\ref{prop-embedded-smooth}, $X$ is either a smooth quartic threefold or
a smooth prime Fano threefold of genus $9$.  
Let $S$ be a $G$-invariant smooth K3 surface. 
Write $\Pic(X)=\Z L$ and
$-K_X=i(X)L$, where $L$ is primitive.  Since $S$ is a smooth ample
divisor in the smooth threefold $X$, the integral Lefschetz theorem
shows that the restriction map $\Pic(X)\to\Pic(S)$ is injective with
torsion-free cokernel, see
\cite[Corollary~2.3.4]{BeltramettiSommese}.  Hence $L|_S$ is
primitive.
By Proposition~\ref{prop-k3}, the group acting faithfully on $S$ is
$H\simeq(\Z/4)^3$.  Since $L|_S$ is a primitive $H$-invariant class,
Proposition~\ref{prop-bh-k3}(2) gives $(L|_S)^2=4$.  Hence
\[
 i(X)L^3=(L|_S)^2=4.
\]
For a Fano threefold of genus $9$ one has $i(X)=1$ and $L^3=16$, a
contradiction.  Thus $X\subset\PP^4$ is a smooth quartic hypersurface.
\end{proof}

\begin{proposition}
\label{prop-fermat}
Let $X\subset\PP^4$ be a smooth quartic threefold endowed with a
faithful action of $G=(\Z/4)^4$.  Then, up to an automorphism of $G$,
there is a $G$-equivariant projective isomorphism $X\simeq X_4$, where
$G$ acts on $X_4$ by the standard diagonal action \eqref{eq-fermat}.
\end{proposition}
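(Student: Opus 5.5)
The plan is to linearize the action, diagonalize it, and then use the size of $G$ to force the quartic to be Fermat.

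\emph{Linearization.} Since $X\subset\PP^4$ is a smooth quartic, $\Pic(X)=\Z\cdot\mathcal O_X(1)$ and $-K_X=\mathcal O_X(1)$. Hence every automorphism of $X$ is induced by a projective linear transformation. The action of $G$ on $H^0(X,-K_X)$ is canonical, because $-K_X$ carries a natural linearization. This gives a linear action $G\to\operatorname{GL}_5(\C)$ on $V=H^0(X,\mathcal O_X(1))$, and the induced map $G\to\operatorname{PGL}_5(\C)$ is injective. As $G$ is abelian, we may choose coordinates $x_0,\dots,x_4$ that are eigenvectors. In these coordinates $G$ acts by diagonal matrices $\operatorname{diag}(\chi_0(g),\dots,\chi_4(g))$ with characters $\chi_i\in\widehat G\simeq(\Z/4)^4$.

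\emph{Determining the characters.} Faithfulness in $\operatorname{PGL}_5$ means that the differences $\chi_i-\chi_0$ for $i=1,\dots,4$, written additively in $\widehat G$, separate the points of $G$. These are four elements of a group of exponent $4$ and rank $4$. The dual map $G\to(\Z/4)^4$, $g\mapsto(\chi_i(g)/\chi_0(g))_i$, is injective between groups of the same order, so it is an isomorphism. After replacing the action by its composite with some $\alpha\in\Aut(G)$ and twisting by the character $\chi_0$ (which does not change the projective action), we may assume the following: $\chi_0$ is trivial, and $g_i$ multiplies $x_i$ by $\sqrt{-1}$ and fixes the other coordinates. This is exactly the standard action on $\PP^4$.

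\emph{The equation.} The defining quartic $F$ of $X$ is $G$-semi-invariant, since $X$ is $G$-invariant and the quartic is unique up to scalar. A monomial $x^a=x_0^{a_0}\cdots x_4^{a_4}$ with $\sum a_i=4$ transforms under $g_i$ by $\sqrt{-1}^{\,a_i}$. So all monomials occurring in $F$ have the same vector $(a_1,\dots,a_4)\bmod 4$. Smoothness of $X$ forces some monomial of the shape $x_j^4$ or $x_j^3x_k$ to occur for each $j$; otherwise the coordinate point $e_j$ would be a singular point of $X$.

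Suppose $x_0^4$ occurs. Then the common class is $(0,0,0,0)$, which means every $a_i\equiv0\pmod 4$ for $i\ge1$. Since $\sum a_i=4$, the only allowed monomials are $x_0^4,\dots,x_4^4$. Hence $F=\sum c_ix_i^4$, and smoothness gives all $c_i\neq0$. Rescaling the coordinates by fourth roots commutes with the diagonal action, so it brings $F$ to the Fermat form.

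Suppose instead that $x_0^4$ does not occur. Then $x_0^3x_k$ must occur for some $k$. Its class has $a_k=1$ and all other $a_i=0$ for $i\ge1$. Any monomial involving $x_j$ with $j\ne0,k$ would then need $a_j\equiv0\pmod 4$, so $a_j=4$, which is inconsistent with $a_k\equiv1$. Therefore $F$ involves only $x_0$ and $x_k$, and $X$ is a cone, which is singular. This contradicts smoothness. The first case therefore holds and $X\simeq X_4$ equivariantly.

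The main point needing care is the linearization step. One must check that the lift to $\operatorname{GL}(V)$ comes from the canonical linearization, and that the automorphism $\alpha$ together with the twist by $\chi_0$ correctly realizes the intertwining relation $\varphi\circ g=\alpha(g)\circ\varphi$. The remaining steps are routine character bookkeeping.
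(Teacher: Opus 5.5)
Your proof is correct and follows the same route as the paper: you use the canonical linearization of $-K_X$, an eigenbasis whose differences $\chi_i-\chi_0$ form a basis of the character group, and semi-invariance of the quartic $F$. The only variation is the last step: the paper rules out a nonzero weight by noting that each nonzero weight contains at most one quartic monomial, while you reach the same conclusion from the singularity criterion at the coordinate vertices and a case split on whether $x_0^4$ occurs, and both arguments are valid.
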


\begin{proof}
The natural $G$-linearization of $-K_X$ gives a representation on
$V=H^0(X,-K_X)$ whose projectivization is faithful.  Write $G^\vee$
additively and choose an eigenbasis
$x_0,\ldots,x_4$ with characters $\chi_0,\ldots,\chi_4$, and put
$\varepsilon_i=\chi_i-\chi_0$ for $1\leq i\leq4$.  These four
characters generate $G^\vee$: an element annihilated by all of them
would act on $V$ by a scalar.  Hence they form a basis of
$G^\vee\simeq(\Z/4)^4$, which we may take to be the standard basis
after an automorphism of $G$.

The equation  $F$ of $X$ is semi-invariant.  The weight of a monomial
$x_0^{e_0}\ldots x_4^{e_4}$ is
$\sum_{i=0}^4e_i\chi_i=\sum_{i=1}^4e_i\varepsilon_i$, because
$\sum e_i=4$ and $4\chi_0=0$.  A nonzero weight occurs in at most one
quartic monomial: for such a weight all $e_i$ with $i\geq1$ are less
than four, so their residues modulo four determine them, and then
$e_0$ is determined by $\sum e_i=4$.  Hence a
semi-invariant quartic of nonzero weight would be a monomial, which
cannot define a smooth hypersurface.  Therefore $F$ has weight zero.
The weight-zero quartic monomials are precisely
$x_0^4,\ldots,x_4^4$, and hence
$F=a_0x_0^4+\ldots+a_4x_4^4$.  Smoothness implies that every $a_i$ is
nonzero.  Rescaling the coordinates gives the Fermat equation.
\end{proof}

\section{The non-Gorenstein case}
\label{sec-nongorenstein}
Let $X$ be a $G\mathbb{Q}$-Fano threefold where $G=(\mathbb{Z}/4)^4$. 
Assume that $X$ is non-Gorenstein.  By Proposition~\ref{prop-k3}, we have
$
 H^0(X,-K_X)=0.
$

\subsection{Basket of singularities}
\label{sec-about-baskets}
Let $P\in X$ be a terminal threefold singularity.  A small deformation
of the germ $P\in X$ has finitely many terminal cyclic quotient
singularities
\[
 Q_i=\frac1{r_i}(a_i,-a_i,1).
\]
Their multiset is called the \emph{local Reid basket} of $P\in X$.
After grouping equal quotient types, we write
\begin{equation}
 B(P\in X)=
 \left\{n_i\times\frac1{r_i}(a_i,-a_i,1)\right\}_{i=1}^s,
 \label{eq-local-basket}
\end{equation}
where the displayed quotient types are pairwise distinct and $n_i$ is
their multiplicity.
The global Reid basket $B(X)$ is the disjoint union of the local
baskets over all non-Gorenstein points of $X$, see
\cite[Section~6]{Reid}.

If $P\in X$ has index $r$, then every entry of $B(P\in X)$ has index
$r$, except when $P\in X$ is of type $cAx/4$.  In that case the basket
contains one index-four entry and all its remaining entries have index
two.  If $P\in X$ is not itself a cyclic quotient singularity, then
its local basket contains at least two entries.

\subsection{Orbifold Riemann--Roch formula}
Consider a basket element
$
 Q=\frac1r(a,-a,1).
$
Interchanging $a$ and $-a$ does not change $Q$.  We therefore choose
the sign of $a$ so that its inverse $b$ modulo $r$ satisfies
$1\leq b\leq r/2$. Thus $ab\equiv1\pmod r$.  Set
\[
 d(Q)=\frac{b(r-b)}r,
 \qquad c(Q)=r-\frac1r.
\]
Reid's orbifold Riemann--Roch formula applies to a $\Q$-Cartier Weil
divisor on a terminal threefold \cite[Section~10.2]{Reid}, see also
\cite[Theorem~12.1.3]{ProkhorovEMMP}.  
Thus, for a $G\Q$-Fano threefold one has
\[
 (-K_X)\cdot c_2(X)+
 \sum_{Q\in B(X)}\left(r_Q-\frac1{r_Q}\right)=24
\]
and
\[
 (-K_X)\cdot c_2(X)>0.
\]
If $H^0(X,-K_X)=0$, Kawamata--Viehweg vanishing gives
$\chi\bigl(X,\mathcal O_X(-K_X)\bigr)=0$.  Substituting $D=-K_X$ in orbifold
Riemann--Roch and using the displayed identity gives
\begin{align}
 (-K_X)^3&=-6+\sum_{Q\in B(X)}d(Q),
 \label{eq-orr-volume}\\
 (-K_X)\cdot c_2(X)&=
 24-\sum_{Q\in B(X)}c(Q)>0.
 \label{eq-orr-c2}
\end{align}

\begin{remark}
For a $G\mathbb{Q}$-Fano threefold where $G=(\mathbb{Z}/4)^4$, the
basket can be determined from \eqref{eq-orr-volume} and
\eqref{eq-orr-c2}.  The result is
\[
 B(X)=8\times\frac12(1,1,1)
 \ \sqcup\
 4\times\frac13(1,2,1),
 \qquad
 (-K_X)^3=\frac23.
\]
Indeed, Corollary~\ref{lem-orbits} shows that every basket
multiplicity is divisible by four.  Writing it as $4x_{r,b}$ gives
\[
 \sum x_{r,b}\frac{b(r-b)}r>\frac32,
 \qquad
 \sum x_{r,b}\left(r-\frac1r\right)<6.
\]
The second inequality gives $r\leq6$.  Checking the finite multisets of
coprime pairs $(r,b)$ satisfying both inequalities gives
$x_{2,1}=2$ and $x_{3,1}=1$. Then \eqref{eq-orr-volume} gives the
degree.
\end{remark}

\subsection{Equivariant Euler-characteristic congruence}

\begin{lemma}
\label{lem-euler-congruence}
Let a finite group $A$ act on a normal projective variety $X$ with
isolated singularities, and let $N\geq1$ be an integer such that the
cardinality of every $A$-orbit in $X$ is divisible by $N$.  Let
$\pi\colon Z\to X$ be an $A$-equivariant resolution which is an
isomorphism over $X_{\mathrm{reg}}$.  Then
\[
 \chi\bigl(Z,\mathcal O_Z(-K_Z)\bigr)\equiv
 \chi\bigl(X,\mathcal O_X(-K_X)\bigr)\pmod N.
\]
\end{lemma}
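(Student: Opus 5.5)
The idea is that the difference $\chi(Z,\mathcal O_Z(-K_Z))-\chi(X,\mathcal O_X(-K_X))$ is a sum of local contributions at the finitely many singular points of $X$. Each contribution depends only on the germ of $X$ at the point together with the germ of the resolution over it. Since $\pi$ is $A$-equivariant, these contributions are constant along $A$-orbits. Every orbit has cardinality divisible by $N$, so the total difference is divisible by $N$.

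\emph{Leray.} Put $F=\mathcal O_Z(-K_Z)$. The Leray spectral sequence gives
\[
\chi(Z,F)=\sum_{i\geq0}(-1)^i\chi(X,R^i\pi_*F).
\]
For $i>0$, the sheaves $R^i\pi_*F$ vanish over $X_{\mathrm{reg}}$, because $\pi$ is an isomorphism there. Since $\operatorname{Sing}(X)$ is finite, these sheaves are supported at finitely many points. Hence
\[
\chi(X,R^i\pi_*F)=\sum_{P\in\operatorname{Sing}(X)}\operatorname{length}_P(R^i\pi_*F)_P.
\]

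\emph{Comparing $\pi_*F$ with $\mathcal O_X(-K_X)$.} I would show that there is a natural injection $\pi_*F\hookrightarrow\mathcal O_X(-K_X)$ whose cokernel $T$ is supported on $\operatorname{Sing}(X)$. A section of $\pi_*F$ over an open set $U$ is a section of $F$ over $\pi^{-1}U$. Restricting it to $\pi^{-1}(U\cap X_{\mathrm{reg}})\simeq U\cap X_{\mathrm{reg}}$ gives a section of $\mathcal O_X(-K_X)$ there. This extends uniquely over $U$, because $\mathcal O_X(-K_X)$ is reflexive on the normal variety $X$ and $\operatorname{Sing}(X)$ has codimension at least two. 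We may assume $\dim X\geq2$; for curves everything is smooth and the statement is trivial. The map is injective since $F$ is torsion free, and it is an isomorphism over $X_{\mathrm{reg}}$. Therefore
\[
\chi(Z,F)-\chi(X,\mathcal O_X(-K_X))=\sum_{P\in\operatorname{Sing}(X)}\delta_P,
\qquad
\delta_P=-\operatorname{length}T_P+\sum_{i>0}(-1)^i\operatorname{length}(R^i\pi_*F)_P .
\]

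\emph{Equivariance.} The set $\operatorname{Sing}(X)$ is $A$-invariant. For $g\in A$, the automorphism $g$ of $Z$ covers $g$ on $X$ and preserves $K_Z$ canonically: pulling back differential forms gives a natural linearization of $\omega_Z^{\vee}$ and of $\mathcal O_X(-K_X)$. So $g$ induces compatible isomorphisms between the stalks of $R^i\pi_*F$ and of $T$ at $P$ and at $g(P)$, because the injection constructed above is canonical. Hence $\delta_{g(P)}=\delta_P$. Grouping the sum by orbits expresses it as $\sum_{\mathcal O}|\mathcal O|\,\delta_{\mathcal O}$, and each $|\mathcal O|$ is divisible by $N$, which gives the congruence.

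The main point needing care is the second step: setting up the natural comparison map and checking that it is compatible with the group action. This requires the linearizations of $\mathcal O_Z(-K_Z)$ and $\mathcal O_X(-K_X)$ to be the canonical ones coming from differential forms, so that the cokernel stalks are genuinely identified by $g$.
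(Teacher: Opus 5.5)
Your proposal is correct and follows essentially the same route as the paper: the Leray spectral sequence for $\pi$, an injection $\pi_*\omega_Z^{-1}\hookrightarrow\mathcal O_X(-K_X)$ with finite-length cokernel, and divisibility by $N$ because stalk lengths of these $A$-equivariant sheaves are constant on orbits. The paper obtains the injection as the map to the reflexive hull, $(\pi_*\omega_Z^{-1})^{**}\simeq\mathcal O_X(-K_X)$, whereas you build the same map by extending sections across the codimension-two singular locus. Your care about using the canonical linearizations spells out what the paper means by ``naturally $A$-equivariant''.
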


\begin{proof}
The sheaf $\pi_*\omega_Z^{-1}$ is torsion-free of rank one, and
\[
 (\pi_*\omega_Z^{-1})^{**}\simeq\omega_X^{[-1]}
 =\mathcal O_X(-K_X),
\]
because the two sheaves agree over $X_{\mathrm{reg}}$.  Hence the
cokernel $\mathcal Q$ in the exact sequence
\begin{equation}
 0\longrightarrow\pi_*\omega_Z^{-1}
 \longrightarrow\mathcal O_X(-K_X)
 \longrightarrow\mathcal Q\longrightarrow0
 \label{eq-anticanonical-cokernel}
\end{equation}
has finite length.  The higher direct images
$R^i\pi_*\omega_Z^{-1}$ for $i>0$ are also supported on $\Sing(X)$ and
have finite length.  The Leray spectral sequence
\[
 E_2^{p,i}=H^p\bigl(X,R^i\pi_*\omega_Z^{-1}\bigr)
 \quad\Longrightarrow\quad
 H^{p+i}(Z,\omega_Z^{-1})
\]
gives, after taking Euler characteristics
(cf.~\cite[Tag~0EDD, Lemma~72.17.3]{StacksProject}),
\begin{equation}
 \chi\bigl(Z,\mathcal O_Z(-K_Z)\bigr)=
 \sum_{i\geq0}(-1)^i
 \chi\bigl(X,R^i\pi_*\omega_Z^{-1}\bigr).
 \label{eq-leray-euler-characteristic}
\end{equation}
By additivity of the Euler characteristic in
\eqref{eq-anticanonical-cokernel},
\begin{equation}
 \chi\bigl(X,\pi_*\omega_Z^{-1}\bigr)
 =\chi\bigl(X,\mathcal O_X(-K_X)\bigr)-\ell(\mathcal Q).
 \label{eq-euler-pushforward}
\end{equation}
Moreover, for $i>0$,
\begin{equation}
 \chi\bigl(X,R^i\pi_*\omega_Z^{-1}\bigr)
 =\ell\bigl(R^i\pi_*\omega_Z^{-1}\bigr),
\label{eq-euler-higher-direct-images}
\end{equation}
since these sheaves have zero-dimensional support.  Substituting
\eqref{eq-euler-pushforward} and
\eqref{eq-euler-higher-direct-images} into
\eqref{eq-leray-euler-characteristic} gives
\begin{equation}
 \chi\bigl(Z,\mathcal O_Z(-K_Z)\bigr)
 -\chi\bigl(X,\mathcal O_X(-K_X)\bigr)
 =-\ell(\mathcal Q)+
 \sum_{i>0}(-1)^i
 \ell\bigl(R^i\pi_*\omega_Z^{-1}\bigr).
 \label{eq-leray-lengths}
\end{equation}
All the sheaves on the right-hand side are naturally
$A$-equivariant.  Their stalk lengths are constant along each
$A$-orbit, and therefore their total lengths are divisible by $N$.
The congruence follows from \eqref{eq-leray-lengths}.
\end{proof}

\begin{corollary}
\label{cor-nongorenstein}
There is no non-Gorenstein terminal $G\Q$-Fano threefold where $G=(\mathbb{Z}/4)^4$.
\end{corollary}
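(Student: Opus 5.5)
The plan is to argue by contradiction. Suppose $X$ is a non-Gorenstein terminal $G\Q$-Fano threefold. As noted at the start of this section, Proposition~\ref{prop-k3} gives $H^0(X,-K_X)=0$. Since $-2K_X$ is ample, Kawamata--Viehweg vanishing gives $H^i(X,\mathcal O_X(-K_X))=0$ for $i>0$, so $\chi(X,\mathcal O_X(-K_X))=0$. I intend to contradict this with a parity computation on an equivariant resolution; the basket computation of the Remark is not needed.

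First, I pass to a resolution. Terminal threefold singularities are isolated, so there is a $G$-equivariant resolution $\pi\colon Z\to X$ that is an isomorphism over $X_{\mathrm{reg}}$, for instance equivariant resolution of singularities. By Corollary~\ref{lem-orbits}, every $G$-orbit on $X$ has cardinality divisible by $4$. Lemma~\ref{lem-euler-congruence} with $N=4$ then gives
\[
 \chi\bigl(Z,\mathcal O_Z(-K_Z)\bigr)\equiv\chi\bigl(X,\mathcal O_X(-K_X)\bigr)=0\pmod 4 .
\]

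Next, I compute the left-hand side on the smooth projective threefold $Z$ by Hirzebruch--Riemann--Roch. With $D=-K_Z$,
\[
 \chi(Z,D)=\tfrac16D^3-\tfrac14D^2K_Z+\tfrac1{12}D\cdot(K_Z^2+c_2(Z))+\chi(\mathcal O_Z),
\]
which simplifies to
\[
 \chi(Z,-K_Z)=\tfrac12(-K_Z)^3+\tfrac1{12}(-K_Z)\cdot c_2(Z)+\chi(\mathcal O_Z).
\]
Since $Z$ is smooth and rationally connected, $\chi(\mathcal O_Z)=1$, and $-K_Z\cdot c_2(Z)=24\chi(\mathcal O_Z)=24$. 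Hence
\[
 \chi(Z,-K_Z)=\tfrac12(-K_Z)^3+3 .
\]

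Finally, I bound $(-K_Z)^3$. The group $G$ acts faithfully on the smooth (hence terminal) projective threefold $Z$, and $\omega_Z^{-1}$ carries its natural $G$-linearization. Corollary~\ref{cor-intersection-divisibility}, applied with $L_1=L_2=L_3=\omega_Z^{-1}$, gives $4\mid(-K_Z)^3$. Alternatively, orbit sizes on $Z$ are multiples of the sizes of their image orbits in $X$, hence divisible by $4$, and one can use Lemma~\ref{lem-invariant-zero-cycle} directly. Writing $(-K_Z)^3=4m$, we get $\chi(Z,-K_Z)=2m+3$, which is odd and in particular not divisible by $4$. This contradicts the congruence above.

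The step needing most care is the application of Lemma~\ref{lem-euler-congruence}. Its hypotheses must hold: the singularities are isolated, the resolution is equivariant and an isomorphism over the regular locus, and the orbit divisibility on $X$ includes the singular points. The last of these is exactly Corollary~\ref{lem-orbits}, which relies on Theorem~\ref{thm-local}. The remaining steps are routine.
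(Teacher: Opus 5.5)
Your proposal is correct and follows the paper's proof essentially step for step. The steps are the vanishing $\chi(X,\mathcal O_X(-K_X))=0$, the mod-$4$ congruence from Lemma~\ref{lem-euler-congruence} on an equivariant resolution, $4\mid(-K_Z)^3$ via Corollary~\ref{cor-intersection-divisibility}, and the parity of $3+\frac12(-K_Z)^3$ from Hirzebruch--Riemann--Roch. Your alternative justification of orbit divisibility on $Z$, that stabilizers on $Z$ lie in stabilizers of image points on $X$, is a valid substitute for the paper's direct appeal to Corollary~\ref{lem-orbits} on $Z$.
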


\begin{proof}
Choose a smooth projective $G$-equivariant resolution
$\pi\colon Z\to X$ which is an isomorphism over $X_{\mathrm{reg}}$,
see \cite[Theorem~1.1]{BierstoneMilman}.
Both $X$ and $Z$ are terminal threefolds with faithful $G$-actions.
Hence, by Corollary \ref{lem-orbits}, every $G$-orbit in either variety
has cardinality divisible by four.
Since $H^0(X,-K_X)=0$, Kawamata--Viehweg vanishing gives
$
 \chi\bigl(X,\mathcal O_X(-K_X)\bigr)=0.
$
Hence Lemma \ref{lem-euler-congruence} yields
\begin{equation}
 \chi\bigl(Z,\mathcal O_Z(-K_Z)\bigr)\equiv0\pmod4.
 \label{eq-euler-mod-four}
\end{equation}

The anticanonical bundle of $Z$ is naturally $G$-linearized, so
Corollary \ref{cor-intersection-divisibility} gives
$
 4\mid(-K_Z)^3.
$
The smooth threefold $Z$ is rationally connected, and hence
$\chi(Z,\mathcal O_Z)=1$.  For a smooth threefold,
Hirzebruch--Riemann--Roch gives
\[
 \chi\bigl(Z,\mathcal O_Z(-K_Z)\bigr)
 =\chi(Z,\mathcal O_Z)+\frac12(-K_Z)^3
  +\frac1{12}(-K_Z)\cdot c_2(Z),
\]
whereas
\[
 \chi(Z,\mathcal O_Z)=\frac1{24}(-K_Z)\cdot c_2(Z).
\]
Consequently,
\[
 \chi\bigl(Z,\mathcal O_Z(-K_Z)\bigr)
 =3+\frac12(-K_Z)^3.
\]
The right-hand side is odd, contradicting
\eqref{eq-euler-mod-four}.
\end{proof}

\section{Primary components and the case $(\mathbb Z/6)^4$}
\label{sec-sharp-bounds}

The following proposition is an immediate consequence of Theorem
\ref{thm-kollar-zhuang}, applied to the primary components of a finite
abelian subgroup $A\subset\operatorname{Bir}(X)$.

\begin{proposition}
\label{prop-abelian-structure-general}
Let $X$ be a rationally connected variety of dimension $n$ and let
$A\subset\operatorname{Bir}(X)$ be a finite abelian group.  Then
\[
 A\simeq A_0\times B,
\]
where $A_0$ can be generated by at most $n$ elements and
\begin{equation}
 |B|\mid\mathfrak h_n,
 \qquad
 \mathfrak h_n:=
 \prod_{\substack{p\text{ prime}\\p\leq n+1}}
 p^{\left\lfloor n/(p-1)\right\rfloor}.
 \label{eq-general-bound}
\end{equation}
\end{proposition}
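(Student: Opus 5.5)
Since $A$ is a finite abelian group, it decomposes as the direct product $A\simeq\prod_p A_p$ of its Sylow $p$-subgroups. Each $A_p$ is a finite abelian $p$-subgroup of $\operatorname{Bir}(X)$. The plan is to apply Theorem~\ref{thm-kollar-zhuang} to each $A_p$ separately and then reassemble. To do this we first need a smooth projective model with a regular action. By equivariant resolution (e.g.\ \cite[Theorem~1.1]{BierstoneMilman} applied to a regularization of the birational action, obtained by taking the normalized closure of the graph of the $A$-action), there is a smooth projective rationally connected variety $X'$, $A$-birational to $X$, on which $A$ acts biregularly and faithfully. Its subgroups $A_p$ then act faithfully on $X'$ as well.

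Theorem~\ref{thm-kollar-zhuang} applied to $A_p$ yields a decomposition $A_p\simeq P_{1,p}\times P_{2,p}$, where $\mathfrak r(P_{1,p})\leq n$ and $|P_{2,p}|\leq p^{n/(p-1)}$. Since $|P_{2,p}|$ is a power of $p$, this is equivalent to $|P_{2,p}|\mid p^{\lfloor n/(p-1)\rfloor}$. If $p>n+1$, then $n/(p-1)<1$, so $P_{2,p}$ is trivial. Now put
\[
A_0=\prod_p P_{1,p},\qquad B=\prod_p P_{2,p}.
\]
Then $A\simeq A_0\times B$, and $|B|=\prod_{p\leq n+1}|P_{2,p}|$ divides $\mathfrak h_n$.

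It remains to check that $A_0$ is generated by at most $n$ elements. For a finite abelian group, $\mathfrak r$ of a product of groups of pairwise coprime orders equals the maximum of the $\mathfrak r$ of the factors. This holds because $\mathfrak r(M)=\max_p\dim_{\mathbb F_p}M/pM$, and a $p'$-group contributes nothing to $M/pM$; concretely, one generates by taking products of the $i$-th generators of each primary component, which works by the Chinese remainder theorem. Hence $\mathfrak r(A_0)=\max_p\mathfrak r(P_{1,p})\leq n$.

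The only step that is not pure bookkeeping is the regularization. Theorem~\ref{thm-kollar-zhuang} is stated for actions on smooth projective rationally connected varieties, whereas $A\subset\operatorname{Bir}(X)$. For a finite group, however, a regularization always exists, and equivariant resolution preserves rational connectedness, so this is standard. The main point is that these two constructions must be invoked correctly; everything after that is the elementary group theory described above.
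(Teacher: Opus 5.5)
Your proposal is correct and follows the paper's proof essentially step by step. Like the paper, you pass to a smooth projective equivariant model and apply Theorem~\ref{thm-kollar-zhuang} to each primary component $A_p$. You then reassemble using two facts: the minimal number of generators of a product of coprime-order factors is the maximum over the factors, and $|B_p|$, being a power of $p$ bounded by $p^{n/(p-1)}$, divides $p^{\lfloor n/(p-1)\rfloor}$ and is trivial for $p>n+1$. You spell out the regularization step in more detail than the paper does, but the argument is the same.
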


\begin{proof}
After passing to an equivariant smooth projective model, write
\[
 A=\prod_p A_p.
\]
Theorem \ref{thm-kollar-zhuang} gives decompositions
\[
 A_p\simeq A'_p\times B_p,
 \qquad
 \rk A'_p\leq n,
 \qquad
 |B_p|\leq p^{n/(p-1)}.
\]
Put $A_0=\prod_p A'_p$ and $B=\prod_p B_p$.  Since the primary
factors have pairwise coprime orders, the minimal number of generators
of $A_0$ is the maximum of those of the groups $A'_p$.  Thus $A_0$ can
be generated by at most $n$ elements.  Since $|B_p|$ is a power of
$p$, we have
\[
 |B_p|\mid p^{\left\lfloor n/(p-1)\right\rfloor}.
\]
In particular, $B_p=\{1\}$ for $p>n+1$, and
\eqref{eq-general-bound} follows.
\end{proof}

\begin{corollary}
\label{cor-kollar-zhuang-uniform}
Let $X$ be a rationally connected threefold and let
$A\subset\operatorname{Bir}(X)$ be a finite abelian group.  Then
$
 A\simeq A_0\times B,
$
where $A_0$ can be generated by at most three elements and
$|B|$ divides~$24$.  In particular, if
$(\Z/m)^r\subset\operatorname{Bir}(X)$, where $m\geq2$ and $r\geq4$,
then $m^{r-3}$ divides $24$.
\end{corollary}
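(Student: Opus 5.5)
The plan is to apply Proposition~\ref{prop-abelian-structure-general} with $n=3$ and then compute $\mathfrak h_3$. The primes with $p\leq 4$ are $p=2$ and $p=3$. The exponents are $\lfloor 3/1\rfloor=3$ and $\lfloor 3/2\rfloor=1$, so
\[
 \mathfrak h_3=2^3\cdot 3=24.
\]
This gives $A\simeq A_0\times B$ with $A_0$ generated by at most three elements and $|B|$ dividing $24$, which is the first assertion.

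For the second assertion, suppose $(\Z/m)^r\subset\operatorname{Bir}(X)$ with $r\geq4$, and put $A=(\Z/m)^r$. The idea is to compare orders prime by prime.

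Fix a prime $p$ and let $p^a$ be the exact power of $p$ dividing $m$. The $p$-primary part of $A$ is $(\Z/p^a)^r$, and it decomposes as $A_{0,p}\times B_p$. Here $A_{0,p}$ is a $p$-group generated by at most three elements and of exponent dividing $p^a$, so $|A_{0,p}|\leq p^{3a}$. Therefore
\[
 |B_p|=\frac{p^{ar}}{|A_{0,p}|}\geq p^{a(r-3)}.
\]
Since $|B_p|$ is a power of $p$ dividing $|B|$, and $|B|$ divides $24$, we get that $p^{a(r-3)}$ divides $24$. Taking the product over all primes $p$ shows that $m^{r-3}$ divides $24$.

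The only point that needs care is the exponent bound $|A_{0,p}|\leq p^{3a}$. It holds because $A_{0,p}$ is a subgroup (in fact a direct factor) of a group of exponent $p^a$, so each of its at most three cyclic generators has order at most $p^a$. There is no real obstacle beyond this.
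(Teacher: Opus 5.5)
Your proof is correct and follows the paper's argument: compute $\mathfrak h_3=24$, apply Proposition~\ref{prop-abelian-structure-general}, and bound $|A_0|$ using its exponent and number of generators. The only difference is cosmetic: you work prime by prime with $|A_{0,p}|\leq p^{3a}$, whereas the paper observes globally that $|A_0|$ divides $m^3$, so $m^{r-3}$ divides $|B|$.
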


\begin{proof}
The first assertion follows from Proposition
\ref{prop-abelian-structure-general}, since
$\mathfrak h_3=2^3\cdot3=24$.  Now let $A=(\Z/m)^r$.  The group $A_0$
has exponent dividing $m$ and can be generated by at most three
elements, so $|A_0|\mid m^3$.  Since
$m^r=|A|=|A_0||B|$, it follows that
$m^{r-3}$ divides $|B|$, which in turn divides~$24$.
\end{proof}

\begin{remark}
The integer $\mathfrak h_n$ is the $n$-th \emph{Hirzebruch number},
see \cite{BuchstaberVeselov}.
Equivalently, it is the universal denominator of the $n$-th Todd
polynomial $T_n$, that is, the least positive integer $N$ such that
$NT_n$ has integral coefficients.  Thus
\[
 \mathfrak h_1=2,\qquad
 \mathfrak h_2=12,\qquad
 \mathfrak h_3=24,\qquad
 \mathfrak h_4=720,\ldots.
\]
Its appearance here reflects the Todd-class denominator in the proof
of \cite[Proposition~8]{KZh24}.
\end{remark}

Suppose that $G=(\Z/m)^4\subset\operatorname{Bir}(X)$ for a
rationally connected threefold $X$.  Corollary
\ref{cor-kollar-zhuang-uniform} shows that $m$ divides $24$, and hence
\[
 m\in\{2,3,4,6,8,12,24\}.
\]
The cases $m\in\{2,3,4\}$ are realized by Example
\ref{ex-standard-actions}, while for $m\in\{8,12,24\}$ the group
$(\Z/m)^4$ contains $(\Z/4)^4$.  The latter cases are excluded by
Theorem \ref{thm-classification} together with
$(\Z/4)^4\rtimes\mathfrak S_5=\operatorname{Bir}(X_4)=\operatorname{Aut}(X_4)$, since this group
contains none of $(\Z/m)^4$ for $m\in\{8,12,24\}$.  Thus $m=6$ is
the only case requiring a separate argument.

\begin{lemma}
\label{lem-no-six-four}
There is no rationally connected threefold $X$ such that
$(\Z/6)^4\subset\operatorname{Bir}(X)$.
\end{lemma}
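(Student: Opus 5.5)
The plan is to run the argument used for $(\Z/4)^4$ again, replacing the divisibility by four with divisibility by six at non-Gorenstein points.

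\emph{Reduction to a Fano threefold with empty anticanonical system.} Suppose $A=(\Z/6)^4\subset\operatorname{Bir}(X)$. By Theorem~\ref{thm-intro-2}, $A$ is of product type, or the third alternative of Theorem~\ref{thm-three-types} holds, or $A$ is one of the groups \eqref{eq-exceptional-groups}. The last option fails because $(\Z/6)^4$ is not isomorphic to any of them.

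To exclude product type, I would use Blanc's classification \cite{Blanc}. If $A=A_1\times A_2$ with $A_1\subset\Cr_1(\C)$ and $A_2\subset\Cr_2(\C)$, then $A_1$ is cyclic or $(\Z/2)^2$. Hence $A_2$ contains a subgroup isomorphic to $(\Z/6)^3$, or to $(\Z/6)^2\times(\Z/3)^2$. I expect Blanc's list to contain neither: the $3$-rank of a finite abelian subgroup of $\Cr_2(\C)$ is at most $3$, and $(\Z/3)^3$ occurs only on the Fermat cubic surface, without room for the extra $2$-part.

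Hence, by Theorem~\ref{thm-three-types}(3), $X$ is $A$-birational to a terminal $A\Q$-Fano threefold $X'$ with $H^0(X',-K_{X'})=0$. In particular $X'$ is non-Gorenstein, since a Gorenstein Fano threefold satisfies $h^0(-K)=\frac12(-K)^3+3>0$.

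\emph{Orbit divisibility at non-Gorenstein points.} Let $P\in X'$ be a non-Gorenstein point and let $A_P$ be its stabilizer. By Theorem~\ref{thm-local}, $A_P$ is generated by at most three elements, because the exceptional rank-four case occurs only at Gorenstein $cA$ points. A subgroup of $(\Z/6)^4$ generated by three elements has order dividing $6^3$. Therefore the orbit $A\cdot P$ has cardinality divisible by $6$.

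The local basket of a point depends only on the analytic germ, so it is constant along orbits. Consequently every multiplicity in the global basket $B(X')$ is divisible by $6$.

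\emph{Orbifold Riemann--Roch contradiction.} Write each basket multiplicity as $6x_{r,b}$. Then \eqref{eq-orr-volume}, together with $(-K_{X'})^3>0$, and \eqref{eq-orr-c2} give
\[
\sum x_{r,b}\frac{b(r-b)}{r}>1,\qquad \sum x_{r,b}\Bigl(r-\frac1r\Bigr)<4 .
\]
The second inequality forces $r\le4$. It also leaves only the following multisets: one entry of index $2$, $3$ or $4$, or two entries of index $2$. For these, the left-hand side of the first inequality equals $\frac12$, $\frac23$, $\frac34$ or $1$, respectively. None of these values exceeds $1$, which is a contradiction.

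\emph{Main obstacle.} I expect the exclusion of product type to be the hardest step. It requires reading off from \cite{Blanc} that neither $(\Z/6)^3$ nor $(\Z/6)^2\times(\Z/3)^2$ embeds into $\Cr_2(\C)$; if necessary this can be cross-checked with \cite[Table 1]{LoginovProduct}. A secondary point is the claim that baskets are orbit-invariant. For $cAx/4$ points, where the local basket mixes indices, orbit-invariance still multiplies every entry's multiplicity by the orbit length, so the argument is unaffected.
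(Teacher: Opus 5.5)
Your proposal is correct and follows essentially the same route as the paper. The reduction via Theorems~\ref{thm-intro-2} and~\ref{thm-three-types} yields a non-Gorenstein terminal $A\Q$-Fano threefold with empty anticanonical system, Theorem~\ref{thm-local} makes orbit lengths at non-Gorenstein points divisible by six, and the same orbifold Riemann--Roch case check on baskets finishes the argument. The only difference is cosmetic: you deduce that $(\Z/6)^4$ is not of product type from Blanc's list (correctly, since neither $(\Z/6)^3$ nor $(\Z/6)^2\times(\Z/3)^2$ embeds into $\Cr_2(\C)$), whereas the paper simply cites \cite[Table~1]{LoginovProduct}.
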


\begin{proof}
Put $A=(\Z/6)^4$.  The group $A$ is not of product type by
\cite[Table 1]{LoginovProduct}, and it is not isomorphic to any of the
groups in \eqref{eq-exceptional-groups}.  Hence Theorem \ref{thm-three-types}  and Theorem \ref{thm-intro-2} show that a
faithful action of $A$ would give a terminal $A\Q$-Fano threefold $X$
with $|-K_X|=\varnothing$.

The threefold $X$ is non-Gorenstein.  Indeed, for a terminal
Gorenstein Fano threefold, Riemann--Roch and Kawamata--Viehweg
vanishing give
$
 h^0(X,-K_X)>0.
$
Let $P\in X$ be a non-Gorenstein point and let $A_P$ be its
stabilizer.  The exceptional alternative in Theorem
\ref{thm-local} occurs only at a Gorenstein singularity.  Therefore
$A_P$ can be generated by at most three elements.  The $2$-primary
and $3$-primary components of $A_P$ have orders at most $2^3$ and
$3^3$, respectively.  Thus
\[
 6\mid[A:A_P].
\]
Local Reid baskets are constant on $A$-orbits.  Hence the multiplicity
of every basket type $(r,b)$ in the global Reid basket $B(X)$ is
divisible by six. Write this multiplicity as $6x_{r,b}$, where
$x_{r,b}\geq0$.  Equations \eqref{eq-orr-volume} and
\eqref{eq-orr-c2} give
\begin{equation}
 \sum x_{r,b}\frac{b(r-b)}r>1,
 \qquad
 \sum x_{r,b}\left(r-\frac1r\right)<4.
 \label{eq-six-basket}
\end{equation}
The second inequality leaves only the following possible
contributions:
\[
\begin{array}{c|c|c}
(r,b)&b(r-b)/r&r-1/r\\ \hline
(2,1)&1/2&3/2\\
(3,1)&2/3&8/3\\
(4,1)&3/4&15/4
\end{array}
\]
An index-four term cannot occur together with any other term, and its
contribution to the first sum in \eqref{eq-six-basket} is $3/4$.  The
same argument applies to an index-three term, whose contribution to
the first sum is $2/3$.  If all terms have index two, the second
inequality allows at most two of them, whose total contribution to the
first sum is at most $1$.  This contradicts the first inequality in
\eqref{eq-six-basket}.
\end{proof}

\section{Proof of the main results}
\label{sec-proof-main-results}
\begin{proof}[Proof of Theorem \ref{thm-classification}]
By Proposition \ref{prop-mmp}, the variety $X$ is $G$-birational to a
terminal $G\Q$-Fano threefold $Y$.  Corollary
\ref{cor-nongorenstein} excludes the non-Gorenstein case.  Thus $Y$
is Gorenstein, and hence $G$-Fano.  By Proposition
\ref{prop-smooth-quartic}, it is a smooth quartic hypersurface.
Proposition \ref{prop-fermat} identifies $(Y,G)$, up to an
automorphism of $G$ and a projective change of coordinates, with the
Fermat quartic endowed with the standard diagonal $G$-action.  If $X$
is itself a terminal $G\Q$-Fano threefold with a regular action, the
same propositions apply directly to $X$ and give a $G$-equivariant
isomorphism $X\simeq X_4$.
\end{proof}

\begin{proof}[Proof of Corollary \ref{cor-cremona}]
An embedding $G\hookrightarrow\Cr_3(\C)=\operatorname{Bir}(\PP^3)$
would imply, by Theorem \ref{thm-classification}, that $\PP^3$ is birational
to $X_4$, contrary to the non-rationality of a smooth quartic
threefold \cite{IskovskikhManin}.
\end{proof}

\begin{proof}[Proof of Corollary \ref{cor-unique-gamma-model}]
Restricting the action to $(\Z/4)^4$ and applying Theorem
\ref{thm-classification}, we obtain a birational map
$\varphi\colon X\dashrightarrow X_4$.  Conjugation by $\varphi$ gives
an embedding
\[
 \Gamma\hookrightarrow\operatorname{Bir}(X_4),
 \qquad
 g\longmapsto\varphi\circ g\circ\varphi^{-1}.
\]
By the birational superrigidity of $X_4$
\cite{IskovskikhManin} and the identification of its automorphism group
in \eqref{eq-full-fermat-group},
\[
 \operatorname{Bir}(X_4)=\operatorname{Aut}(X_4)=\Gamma.
\]
After identifying $\operatorname{Bir}(X_4)$ with $\Gamma$, the obtained
map is an injective endomorphism of the finite group $\Gamma$, and hence
it is surjective.  This proves the assertion.
\end{proof}

\begin{proof}[Proof of Theorem \ref{thm-sharp-bounds}]
Put $G_{m,r}=(\Z/m)^r$, where $m\geq2$.  For $r\geq4$, Corollary
\ref{cor-kollar-zhuang-uniform} shows that $m^{r-3}$ divides $24$.
If $r\geq7$, this is impossible.  If $r=5$ or $r=6$, it implies that
$m=2$.  If $r=4$, it implies that
\[
 m\in\{2,3,4,6,8,12,24\}.
\]

The case $m=6$ is excluded by Lemma \ref{lem-no-six-four}.  Suppose
that $m\in\{8,12,24\}$ and that
$G_{m,4}\subset\operatorname{Bir}(X)$ for some rationally connected
threefold $X$.  The group $G_{m,4}$ contains a subgroup
$A\simeq(\Z/4)^4$.  By Theorem \ref{thm-classification},
$G_{m,4}$ embeds into
$\operatorname{Bir}(X_4)=\operatorname{Aut}(X_4)
=A\rtimes\mathfrak S_5$.
However,
\[
 |\operatorname{Aut}(X_4)|=4^4\cdot5!=2^{11}\cdot3\cdot5,
\]
and $|G_{m,4}|=m^4$ does not divide this number for
$m\in\{8,12,24\}$, a contradiction.  Thus,
for $r=4$, only $m=2,3,4$ can occur on a rationally connected
threefold.  Corollary \ref{cor-cremona} excludes $m=4$ in
$\Cr_3(\C)$.

Example~\ref{ex-standard-actions} realizes all the cases above.
Part~(1) gives $r\leq3$ for every $m$. Parts~(2) and~(3) give,
respectively, $m=2$, $r\leq6$ and $m=3$, $r\leq4$ on rational
threefolds. Part~(4) gives $m=4$, $r\leq4$ on rationally
connected threefolds.
\end{proof}

\Addresses

\end{document}